\newtheorem{theorem}{Theorem}[section]
\newtheorem{proposition}[theorem]{Proposition}
\newtheorem{corollary}[theorem]{Corollary}
\newtheorem{lemma}[theorem]{Lemma}
\newtheorem{definition}[theorem]{Definition}
\newtheorem{example}[theorem]{Example}
\newtheorem{remark}[theorem]{Remark}
\numberwithin{equation}{section}
\begin{document}
\title{\bf  $(n,d)$-injective and  $(n,d)$-flat modules under a special semidualizing bimodule}
\author{
\\{\bf Mostafa Amini}\\
\small Department of Mathematics, Payame Noor University, Tehran, Iran\\
\small E-mail: dr.mostafa56@pnu.ac.ir\\
\\{\bf Alireza Vahidi}\\
\small Department of Mathematics, Payame Noor University, Tehran, Iran\\
\small E-mail: vahidi.ar@pnu.ac.ir\\
\\{\bf Fatemeh Ghanavati}\\
\small Department of Mathematics, Payame Noor University, Tehran, Iran\\
\small E-mail: fatemehghanavati@student.pnu.ac.ir
}

\date{}
\maketitle

\begin{abstract}
Let $S$ and $R$ be rings, $n, d\geq 0$ be two  integers or $n=\infty$.
 In this paper, first we introduce   special (faithfully)  semidualizing bimodule $_S(K_{d-1})_R$, and then introduce and study the concepts of $K_{d-1}$-$(n,d)$-injective (resp. $K_{d-1}$-$(n,d)$-flat) modules as a common generalization of some known modules such as
$C$-injective, $C$-weak injective and $C$-$FP_n$-injective  (resp. $C$-flat, $C$-weak flat and $C$-$FP_n$-flat) modules.  Then  we obtain some
characterizations of  two classes of
these modules, namely $\mathcal{I}_{K_{d-1}}^{(n,d)}(R)$ and $\mathcal{F}_{K_{d-1}}^{(n,d)}(S)$. We show that the cleasses $\mathcal{I}_{K_{d-1}}^{(n,d)}(R)$ and $\mathcal{F}_{K_{d-1}}^{(n,d)}(S)$ are covering and preenveloping.
Also, we investigate Foxby equivalence relative to the classes of this modules. Finally over $n$-coherent rings, we prove that  the classes $\mathcal{I}_{ K_{d-1}}^{(n,d)}(R)_{<\infty}$  and $\mathcal{F}_{ K_{d-1}}^{(n,d)}(S)_{<\infty}$ are closed under extentions, kernels of epimorphisms and cokernels of monomorphisms. \\
\vspace*{-0.25cm}\\
{\bf Keywords:} $K_{d-1}$-$(n,d)$-injective module; $K_{d-1}$-$(n,d)$-flat module; Foxby equivalence;  special semidualizing bimodule.\\
{\bf 2020 Mathematics Subject Classification:} 16D80, 16E05, 16E30, 16E65, 16P70.
\end{abstract}
\section{Introduction}\label{1}
Injectivity and flatness of modules under a semidualizing module has become an important and active area of research in homological algebra, where  over a commutative Noetherian ring $R$,   a semidualizing module for $R$ is a finite (that is finitely generated) $R$-module
$C$ with ${\rm Hom}_{R}(C,C)$ is canonically isomorphic to $R$ and ${\rm Ext}_{R}^{i}(C,C)=0$ for any $i\geq1$. Semidualizing modules (under
different names) were independently studied by Foxby in \cite{H.B}, Golod in \cite{E.SG} and Vasconcelos in \cite{WWV}.
In 2005,  Araya, Takahashi and Yoshino in \cite{AR} extended the notion of semidualizing
modules to a pair of non-commutative, but Noetherian rings. Also in 2007,
Holm and White in \cite{HW}, generalized the notion of a semidualizing module to
general associative rings, and defined and studied Auslander and Bass classes
 under a semidualizing bimodule $C$. Then, using semidualizing bimodule $C$, they introduced notions of $C$-injective, $C$-projective and $C$-flat modules.
 
 In 2017, Gao and Zhao in \cite{Z.TT} introduced the concept of $C$-weak injective (resp. $C$-weak flat)
 modules with respect to semidualizing bimodule $C$ as a generalization of $C$-injective (resp.  $C$-flat) modules, where weak injective and weak flat
 modules  were already introduced by Gao and Wang \cite{Z.G}. They showed that the Auslander and Bass classes  contain all weak injective and weak flat modules, respectively, and then they investigated Foxby equivalence relative to  the classes of this modules. In 2022, Wu and Gao in \cite{WGZ},  introduced the notion of $C$-$FP_n$-injective (resp. $C$-$FP_n$-flat) modules as a common generalization of some known modules such as $C$-injective, and $C$-$FP$-injective and $C$-weak injective (resp. $C$-flat, $C$-weak flat) modules. Furthermore, they  proved that the classes of this modules are  preenveloping and covering, and found that when these classes are closed under extensions, cokernels of monomorphisms, and kernels of epimorphisms.

Let $R$ and $S$ be rings, and let $n,d$ be  non-negative integers.  In this paper, first we introduce the concept of a special semidualizing bimodule $_S(K_{d-1})_{R}$, where $K_{d-1}$ is  the $(d-1)$th syzygy  of a super finitely presented $_SC_R$, that is semidualizing.  Then we  study the relative homological algebra
associated to the notions of $(n,d)$-injective and $(n,d)$-flat modules with respect to a special semidualizing
bimodule $_S(K_{d-1})_{R}$, where $(n,d)$-injective and $(n,d)$-flat
 modules  were already introduced by Zhou \cite{TX}. We show that $K_{d-1}$-$(n,d)$-injective (resp. $K_{d-1}$-$(n,d)$-flat)  modules possess many nice properties
analogous to that of $C$-weak injective  (resp. $C$-$FP_{n}$-injective) and $C$-weak flat (resp. $C$-$FP_{n}$-flat)  modules as in \cite{Z.TT,TX}.
This paper is organized as follows:\\
\textbf{In Sec. 2}, some fundamental notions and some preliminary results are stated.\\
\textbf{In Sec. 3}, we introduce $K_{d-1}$-$(n,d)$-injective  and $K_{d-1}$-$(n,d)$-flat modules, where $K_{d-1}$ is a special semidualizing
bimodule. For any  $n^{'}\geq n$ and $d^{'}\geq d$,  every  $K_{d-1}$-$(n,d)$-injective (resp. $K_{d-1}$-$(n,d)$-flat)   module is  $K_{d-1}$-$(n^{'},d)$-injective (resp. $K_{d-1}$-$(n^{'},d)$-flat), but not conversely, and also, over $n$-coherent rings, every  $K_{d-1}$-$(n,d)$-injective (resp. $K_{d-1}$-$(n,d)$-flat)   module is  $K_{d-1}$-$(n^{'},d^{'})$-injective (resp. $K_{d-1}$-$(n^{'},d^{'})$-flat), but not conversely, see Example \ref{2-1-2}.  Then for $n>d+1$ with that $d\geq 1$, we prove that $\mathcal{I}^{(n,d)}(S)\subseteq\mathcal{B}_{ K_{d-1}}(S)$ and $\mathcal{F}^{(n,d)}(R)\subseteq\mathcal{A}_{ K_{d-1}}(R)$, where $\mathcal{I}^{(n,d)}(S)$, $\mathcal{F}^{(n,d)}(R)$, $\mathcal{B}_{ K_{d-1}}(S)$ and $\mathcal{A}_{ K_{d-1}}(R)$ denote class of all $(n,d)$-injective $S$-modules,  class of all $(n,d)$-flat $R$-modules, Bass class and Auslander class, respectively. Also, we show that the classes $\mathcal{I}_{K_{d-1}}^{(n,d)}(R)$ and $\mathcal{F}_{K_{d-1}}^{(n,d)}(S)$ are
closed under extentions, direct summands, direct products, direct sums, pure submodules and pure quotients,  where $\mathcal{I}_{K_{d-1}}^{(n,d)}(R)$ and $\mathcal{F}_{K_{d-1}}^{(n,d)}(S)$ denote class of all $K_{d-1}$-$(n,d)$-injective $R$-modules,  class of all $K_{d-1}$-$(n,d)$-flat $R$-modules, respectively. Moreover,
we deduce that   classes  $\mathcal{I}_{ K_{d-1}}^{(n,d)}(R)$ and  $\mathcal{F}_{ K_{d-1}}^{(n,d)}(S)$ are  covering and preenveloping.\\
\textbf{In Sec. 4}, by considering special faithfully semidualizing bimodule $K_{d-1}$, we provide additional information concerning the
Foxby equivalence between the subclasses of Auslander class $\mathcal{A}_{ K_{d-1}}(R)$ and that of the Bass class
$\mathcal{B}_{ K_{d-1}}(S)$. Then over $n$-coherent rings, we show that the classes $\mathcal{I}^{(n,d)}_{K_{d-1}}(R)_{<\infty}$ and $\mathcal{F}^{(n,d)}_{K_{d-1}}(S)_{<\infty}$ are closed under extentions, kernels of epimorphisms and cokernels of monomorphisms.\\

\section{Preliminaries}\label{2}



\ \ Let $n,d$ be  non-negative integers. Throughout this paper $R$ and $S$ are fixed associative rings with
unities and all $R$-or $S$-modules are understood to be unital left $R$-or $S$-modules (unless specified otherwise).
Right $R$-or $S$-modules are identified with left modules over the opposite rings $R^{op}$
or $S^{op}$. $_SM_R$ is used to denote that $M$ is an $(S, R)$-bimodule. This means that $M$
is both a left $S$-module and a right $R$-module, and these structures are compatible.

In this section, some fundamental concepts and notations are stated.
\begin{definition}{\rm (\cite{ARB,S-Q})}\label{1.lk-1}
 \begin{enumerate}
\item [\rm (1)]
{\rm  An $R$-module $U$ is called {\it  finitely $n$-presented} if there exists an exact sequence
$$F_{n}\longrightarrow F_{n-1}\longrightarrow\cdots\longrightarrow  F_1\longrightarrow  F_0\longrightarrow  U\longrightarrow  0,$$ where each $F_i$
is finitely generated and free;
\item [\rm (2)]
An $R$-module $M$ is called  {\it $FP_{n}$-injective} if ${\rm Ext}_{R}^{1}(U, M)=0$ for any  finitely $n$-presented $R$-module $U$, and a right $R$-module $M$ is called  {\it $FP_{n}$-flat} if ${\rm Tor}_{1}^{R}(M, U)=0$ for any  finitely $n$-presented  $R$-module $U$;
\item [\rm (3)]
A ring $R$ is called left \textit{$n$-coherent} if every finitely $n$-presented $R$-module is finitely $(n+ 1)$-presented.}
 \end{enumerate}
\end{definition}
\begin{definition}{\rm (\cite{Z.G,Z.W})}\label{1.lk-2}
 \begin{enumerate}
\item [\rm (1)]
{\rm  An $R$-module $U$ is called {\it super finitely presented}  if there exists an exact sequence
$$ \cdots\longrightarrow  F_{2}\longrightarrow  F_1\longrightarrow  F_0\longrightarrow  U\longrightarrow  0,$$ where each $F_i$
is finitely generated and free;
\item [\rm (2)]
A module $M$ is called  {\it weak injective or $FP_{\infty}$-injective} if ${\rm Ext}_{R}^{1}(U, M)=0$ for any super finitely presented $R$-module $U$, and a right $R$-module $M$ is called  {\it weak flat or $FP_{\infty}$-flat} if ${\rm Tor}_{1}^{R}(M, U)=0$ for any super finitely presented  $R$-module $U$.}
 \end{enumerate}
\end{definition}

\begin{definition}{\rm (\cite{AR,HW})}\label{1.lk-3}
{\rm  Let $R$ and $S$ be rings.} 
\begin{enumerate}
\item [\rm (1)]
{\rm An $(S, R)$-bimodule $C = _S C_R$ is {\it semidualizing} if the following conditions are satisfied:\\
$(a_1)$  $_SC$ admits a degreewise finite $S$-projective resolution;\\
$(a_2)$  $C_R$ admits a degreewise finite $R^{op}$-projective resolution;\\
$(b_1)$  The homothety map $_S\gamma: _SS_S\rightarrow {\rm Hom}_{R^{op}}(C,C)$ is an isomorphism;\\ 
$(b_2)$  The homothety map $\gamma: _RR_R\rightarrow {\rm Hom}_{S}(C,C)$ is an isomorphism; \\ 
$(c_1)$ ${\rm Ext}_{S}^{i}(C,C)=0$ for all $i\geq1$;\\
$(c_2)$ ${\rm Ext}_{R^{op}}^{i}(C,C)=0$ for all $i\geq1$.
\item [\rm (2)]
A semidualizing bimodule $_S C_R$ is faithfully semidualizing if it satisfies the
following conditions for all modules $_S N$ and $M_R$:\\
(a) If ${\rm Hom}_{S}(C,N)=0$, then $N=0$;\\
(b) If ${\rm Hom}_{R^{op}}(C,M)=0$, then $M=0$.}
 \end{enumerate}
\end{definition}
\begin{definition}{\rm (\cite{HW})}\label{1.lk-4}
 \begin{enumerate}
\item [\rm (1)]
{\rm  The Auslander class $\mathcal{A}_{C}(R)$ with respect to $C$ consists of all modules $M$ in ${\rm Mod} R$ satisfying:\\
$(A_1)$  ${\rm Tor}_{i}^{R}(C,M)=0$ for all $i\geq 1$.\\
$(A_2)$  ${\rm Ext}_{S}^{i}(C,C\otimes_{R}M)=0$ for all $i\geq 1$.\\
$(A_3)$  The natural evaluation homomorphism $\mu_{M}:M\rightarrow {\rm Hom}_{S}(C,C\otimes_{R}M)$ is an isomorphism (of left $R$-modules).
\item [\rm (2)]
The Bass class $\mathcal{B}_{C}(S)$ with respect to $C$ consists of all modules $N\in {\rm Mod} S$
satisfying:\\
$(B_1)$  ${\rm Ext}_{S}^{i}(C,N)=0$ for all $i\geq 1$.\\
$(B_2) $  ${\rm Tor}_{i}^{R}(C,{\rm Hom}_{S}(C,N))=0$ for all $i\geq 1$.\\
$(B_3)$  The natural evaluation homomorphism $\nu_{N}:C\otimes_{R}{\rm Hom}_{S}(C,N)\rightarrow N $ is an isomorphism (of left $S$-modules).}
\end{enumerate}
\end{definition}
\begin{definition}{\rm (\cite{Z.TT,WGZ})}\label{1.lk-5}
 \begin{enumerate}
\item [\rm (1)]
{\rm An $R$-module is called \textit{$C$-$FP_n$-injective} if it has the form $\operatorname{Hom}_{S}(C, I)$ for some $FP_n$-injective  $S$-module $I$. An $S$-module is called \textit{$C$-$FP_n$-flat} if it has the form $C\otimes_{R} F$ for some $FP_n$-flat $R$-module $F$;}
\item [\rm (2)]
{\rm An $R$-module is called \textit{$C$-weak injective} if it has the form $\operatorname{Hom}_{S}(C, I)$ for some weak injective  $S$-module $I$. An $S$-module is called \textit{$C$-weak flat} if it has the form $C\otimes_{R} F$ for some weak flat $R$-module $F$.}
\end{enumerate}
\end{definition}

\begin{definition}{\rm \cite[Definition 2.1]{TX}}\label{1.lk1} 
{\rm Let  $n, d$ be non-negative integers. An $S$-module $M$ is called
$(n,d)$-injective, if ${\rm Ext}_{S}^{d+1}(U,M)=0$ for every finitely $n$-presented  $S$-module $U$. Let
$n, d$ be non-negative integers and $n\geq 1$. An $R$-module $N$ is called
$(n,d)$-flat, if ${\rm Tor}_{R}^{d+1}(U,N)=0$ for every finitely $n$-presented  $R^{op}$-module $U$.}
\end{definition}
We denote by $\mathcal{I}^{(n,d)}(S)$ (resp. $\mathcal{F}^{(n,d)}(R)$) the class of all
$(n,d)$-injective $S$-modules (resp. the class of all $(n,d)$-flat $R$-modules).
\begin{remark}\label{1.lk2}
Let  $n, d$ be non-negative integers such that $n\geq d+1$, and $U$  a finitely $n$-presented $S$-module (resp. $R^{op}$-module). Then 
\begin{enumerate}
\item [\rm (1)]
There exists an exact sequence 
$$ F_{n}\to F_{n-1}\to\cdots\to F_1\to F_0\to U\to 0$$ of  $S$-modules (resp. $R^{op}$-modules), where each $F_i$ is  finitely generated and free for any $i\geq 0$.
 If $K:={\rm Ker}(F_{d-1}\to F_{d-2})$, then the module $K$ is called special  finitely presented. 
 \item [\rm (2)]
 Notice that ${\rm Ext}_{S}^{d+1}(U,-)\cong{\rm Ext}_{R}^{1}(K,-)$ and  ${\rm Tor}_{d+1}^{R}(U,-)\cong {\rm Tor}_{1}^{R}(K,-)$.
\end{enumerate}
\end{remark}
\begin{definition}\label{2.bbb}
{\rm Let  $n, d$ be non-negative integers such that $n\geq d+1$. Then 
the $(n,d)$-injective dimension of an $S$-module $M$ and $(n,d)$-flat dimension of an $R$-module $N$
are defined by}\\
 $(n,d)$.${\rm id}_{S}(M)$= ${\rm inf}\{k: \ {\rm Ext}_{R}^{d+k+1}(U, M)=0 \    {\rm for }\ {\rm every} \  {\rm finitely }\  n$-${\rm presented} \  S$-${\rm module} \ U\}$,
and
$(n,d)$.${\rm fd}_{R}(N)$= $ {\rm inf}\{k: \ {\rm Tor}^{R}_{d+k+1}(U, N)=0 \   {\rm for }\ {\rm every} \  {\rm finitely }\  n$-${\rm presented} \  R$-${\rm module} \  U\}$.
\end{definition}
--------------------------------------------------------------

\section{ $K_{d-1}$-$(n,d)$-injective and $K_{d-1}$-$(n,d)$-flat modules}
\ \ \ Let  $n,d$ be  non-negative integers. In this section, we introduce and study  $K_{d-1}$-$(n,d)$-injective and $K_{d-1}$-$(n,d)$-flat modules under a special semidualizing bimodule $_S(K_{d-1})_{R}$.  We start with the following definition. 
 \begin{definition}\label{1.1} 
{\rm Let  $d$ be a non-negative integer. A  super finitely presented $(S, R)$-bimodule $C =$ $_S C_R$ is said to be }{\it $d$-semidualizing} {\rm if the $(d-1)$th syzygy  $K_{d-1}$ of $C $ is semidualizing}.  {\rm \textbf{In this cas, we call that $K_{d-1}$ is a  special semidualizing  with respect to $C$.}}
\end{definition}
There are  examples of $d$-semidualizing bimodules, see  Example \ref{2-1-2}(1). 
\begin{definition}\label{1.a1} 
{\rm Let   $K_{d-1}$ be a special semidualizing bimodule  with respect to $C$, and $n,d\geq 0$.   An  $R$-module is called {\it $K_{d-1}$-$(n,d)$-injective} if it has the form
${\rm Hom}_{S}(K_{d-1}, I )$ for some  $I\in\mathcal{I}^{(n,d)}(S)$. An $S$-module  is called {\it $K_{d-1}$-$(n,d)$-flat} if it has the form $K_{d-1}\otimes_{R} F$ for some
$F\in\mathcal{F}^{(n,d)}(R)$.

We consider
$$\mathcal{I}_{K_{d-1}}^{(n,d)}(R)=\{{\rm Hom}_{S}(K_{d-1}, I )\mid I\in \mathcal{I}^{(n,d)}(S)\}$$ and
$$\mathcal{F}_{K_{d-1}}^{(n,d)}(R)=\{K_{d-1}\otimes_{R} F \mid F\in \mathcal{F}^{(n,d)}(R)\}.$$}
\end{definition}
\begin{remark}\label{1.pi-3}
 Let  $n,d$ be  non-negative integers. Then:
\begin{enumerate}
\item [\rm (1)]
Every $K_{d-1}$-$(n,d)$-injective  (resp. $K_{d-1}$-$(n,d)$-flat ) module is $K_{d-1}$-$(n^{'},d)$-injective (resp. $K_{d-1}$-$(n^{'},d)$-flat) for any $n^{'}\geq n$, but not conversely, since $(n^{'},d)$-injective  (resp. $(n^{'},d)$-flat ) modules need not be $(n,d)$-injective (resp. $(n,d)$-flat) for any $n^{'}> n$, (see Example \ref{2-1-2}(2));
\item [\rm (2)]
Let $K_{d-1}=K_{d^{'}-1}$. Then over $n$-coherent rings every $K_{d-1}$-$(n,d)$-injective  (resp. $K_{d-1}$-$(n,d)$-flat ) module is $K_{d-1}$-$(n^{'},d^{'})$-injective (resp. $K_{d-1}$-$(n^{'},d^{'})$-flat) for any $n^{'}\geq n$ and $d^{'}\geq d$, but not conversely, since $(n^{'},d^{'})$-injective  (resp. $(n^{'},d^{'})$-flat ) modules need not be $(n,d)$-injective (resp. $(n,d)$-flat),  see (Example \ref{2-1-2}(3)).
\item [\rm (2)]  ${\rm Ext}_{S}^{d+1}(C,-)\cong{\rm Ext}_{S}^{1}(K_{d-1},-)$ and  ${\rm Tor}_{d+1}^{R}(-,C)\cong {\rm Tor}_{1}^{R}(-,K_{d-1}).$
\end{enumerate}
\end{remark}
Recall that a ring $R$ is said to be an \textit{$(n, 0)$-ring} or \textit{$n$-regular ring} if every finitely $n$-presented $R$-module is projective (see \cite{L.g,.TZ}).
\begin{example}\label{2-1-2}
\begin{enumerate}
\item [\rm (1)]
If $R=S=C$, then $R$ is $d$-semidualizing bimodule;
\item [\rm (2)]
{\rm Let $K$ be a field, $E$ a $K$-vector space with infinite rank, and $A$ a Noetherian ring of global dimension $0$. Set $B= K\ltimes E$ the trivial extension of $K$ by $E$ and $R= A\times B$ the direct product of $A$ and $B$. By \cite[Theorem 3.4(3)]{L.g}, $R$ is a $(2, 0)$-ring which is not a $(1, 0)$-ring. Thus, for every $R$-module $M$ and every finitely $2$-presented $R$-module $L$, $\operatorname{Ext}_{R}^{1}(L, M)= 0$ (resp. $\operatorname{Tor}_{1}^{R}(L, M)= 0$) . Hence every $R$-module is $(2,0)$-injective (resp. $(2,0)$-flat). On the other hand, there exists an $R$-module which is not $(1,0)$-injective (resp. $(1,0)$-flat), since  if every $R$-module is $(1,0)$-injective (resp. $(1,0)$-flat), \cite[Theorem 3.9]{.TZ} implies that  $R$ is  $(1, 0)$-ring, contradiction. Also, since $C=R=S$ is $d$-semidualizing, then every $R$-module is $C$-$(2,0)$-injective and $C$-$(2,0)$-flat, and there exists an $R$-module which is not $C$-$(1,0)$-injective (resp. $C$-$(1,0)$-flat).
\item [\rm (3)]
Let $R$ be a  ring with $l$.$(1,0)$-${\rm dim}(R)\leq1$ but not $(1,0)$-ring,
for example, let $R = k[X]$ where $k$ is a field. Then there exists an  $R$-module
which is not $(1,0)$-injective by \cite[ Theorem 3.9]{.TZ}. We claim that every
 $R$-module is $(2,1)$-injective. Let $M$ be an $R$-module
and $U$ a $2$-presented  $R$-module. Then there exists an exact sequence $0\rightarrow  M\rightarrow E\rightarrow D\rightarrow 0$ with $E$ is injective. By \cite[Theorem 2.12]{.TZ}, $D$ is $(1,0)$-injective. From the exact sequence $0\rightarrow  {\rm Ext}_{R}^{1}(U,D)\rightarrow  {\rm Ext}_{R}^{1+1}(U,M)\rightarrow 0$ it follows that 
${\rm Ext}_{R}^{1+1}(U,M)=0$, and so every
 $R$-module $M$ is $(2,1)$-injective. Similarly, using from  \cite[Theorems 2.22 and 3.9]{.TZ},  every
 $R$-module is $(2,1)$-flat, but not $(1,0)$-flat. Let $C=R=S$. Since $R$ is $d$-semidualizing, we deduce that every $R$-module is $C$-$(2,1)$-injective (resp. $C$-$(2,1)$-flat), but not $C$-$(1,0)$-injective (resp. $C$-$(1,0)$-flat).
}
\end{enumerate}
\end{example}
\begin{remark}\label{1.pi3}
 Let  $n,d$ be  non-negative integers. Then:
\begin{enumerate}
\item [\rm (1)]
{\rm In case $d=0$, every $d$-semidualizing bimodule is semidualizing;
\item [\rm (2)]
 In case $d=0, n=0$ (resp. $d=0, n=1$),  $K_{d-1}$-$(n,d)$-injective $R$-modules are just the $C$-injective (resp. $C$-$FP$-injective) $R$-modules and $K_{d-1}$-$(n,d)$-flat $S$-modules are just the $C$-flat  $S$-modules in \cite{D.D,HM,WY,WZO};
\item [\rm (3)]
 In case $d=0$,  $K_{d-1}$-$(n,d)$-injective $R$-modules are just the $C$-$FP_n$-injective $R$-modules and $K_{d-1}$-$(n,d)$-flat $S$-modules are just the $C$-$FP_n$-flat   $S$-modules in \cite{WGZ};
 \item [\rm (5)]
 In case $d=0, n=\infty$,  $K_{d-1}$-$(n,d)$-injective $R$-modules are just the $C$-weak injective $R$-modules and $K_{d-1}$-$(n,d)$-flat $S$-modules are just the $C$-weak flat  $S$-modules in \cite{Z.TT};
 \item [\rm (6)] \textbf{In this paper,  $K_{d-1}$ be a special semidualizing bimodule, and we only focus on the case $n> d+1$ with $d\geq1$.}
 \item [\rm (7)] $\mathcal{B}_{K_{d-1}}(S)$ and $\mathcal{A}_{K_{d-1}}(R) $ are the Bass class and the Auslander class with respect to special semidualizing $K_{d-1}$, respectively. }
 \end{enumerate}
\end{remark}

\begin{lemma}\label{1.b-1}
The following assertions hold:
\begin{enumerate}
\item [\rm (1)] 
If $M$ is an $(n,d)$-injective $S$-module, then ${\rm Ext}_{S}^i(K_{d-1},M)=0$ for any $i\geq 1$;
 \item [\rm (2)] 
 If $N$ is an $(n,d)$-flat $R$-module, then ${\rm Tor}^{R}_i(K_{d-1},N)=0$ for any $i\geq 1$.
 \end{enumerate}
\end{lemma}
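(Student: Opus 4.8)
The plan is to reduce both statements to the higher (co)homology of $C$ itself and then to invoke the defining vanishing of $(n,d)$-injective (resp. $(n,d)$-flat) modules for the syzygies of $C$. Since $C$ is super finitely presented, I would fix a resolution $\cdots\to F_1\to F_0\to C\to 0$ by finitely generated free modules and set $L_0:=C$ and $L_k:=\mathrm{Ker}(F_{k-1}\to F_{k-2})$ for $k\geq 1$, so that $L_d=K_{d-1}$ is precisely the syzygy realizing the isomorphisms recorded in Remark \ref{1.pi-3} (equivalently $\mathrm{Ext}^{d+1}_S(C,-)\cong\mathrm{Ext}^1_S(K_{d-1},-)$). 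The key observation is that every $L_k$ is itself super finitely presented, its free resolution being the tail $\cdots\to F_{k+1}\to F_k\to L_k\to 0$; consequently each $L_k$ is finitely $n$-presented (truncate this infinite resolution at stage $n$), so the defining property of an $(n,d)$-injective module may legitimately be applied with $U=L_k$.

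For (1), let $M$ be $(n,d)$-injective and fix $i\geq 1$. Two applications of dimension shifting along the chosen free resolution of $C$ give
\[
\mathrm{Ext}^i_S(K_{d-1},M)=\mathrm{Ext}^i_S(L_d,M)\cong\mathrm{Ext}^{i+d}_S(C,M)\cong\mathrm{Ext}^{d+1}_S(L_{i-1},M),
\]
where the first isomorphism shifts the $d$ syzygy steps from $K_{d-1}=L_d$ back up to $C$, and the second shifts $i-1$ steps back down to the syzygy $L_{i-1}$ (this is legitimate since $(i+d)-(i-1)=d+1\geq 1$). As $L_{i-1}$ is finitely $n$-presented and $M$ is $(n,d)$-injective, the rightmost term vanishes by Definition \ref{1.lk1}, whence $\mathrm{Ext}^i_S(K_{d-1},M)=0$.

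Part (2) is formally dual: one works with the right $R$-module (equivalently $R^{op}$-module) structures, uses that $C_R$ is super finitely presented so that its syzygies $L_{i-1}$ are finitely $n$-presented $R^{op}$-modules, and replaces $\mathrm{Ext}$ by $\mathrm{Tor}$ throughout. The same two shifts, applied to the short exact sequences $0\to L_{k+1}\to F_k\to L_k\to 0$, yield
\[
\mathrm{Tor}^R_i(K_{d-1},N)\cong\mathrm{Tor}^R_{i+d}(C,N)\cong\mathrm{Tor}^R_{d+1}(L_{i-1},N),
\]
and the last group is zero because $N$ is $(n,d)$-flat. I do not expect a genuine obstacle here: the whole argument is a dimension shift combined with the remark that syzygies of a super finitely presented module remain finitely $n$-presented. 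The only points demanding care are the index bookkeeping in the two shifts and the verification that each intermediate syzygy really satisfies the finiteness hypothesis required by Definition \ref{1.lk1}; everything else is the standard long exact sequence machinery.
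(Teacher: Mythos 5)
Your proposal is correct and follows essentially the same route as the paper: both arguments dimension-shift along the fixed free resolution of $C$ in two ways, identifying $\mathrm{Ext}^{i}_{S}(K_{d-1},M)\cong\mathrm{Ext}^{d+i}_{S}(C,M)\cong\mathrm{Ext}^{d+1}_{S}(\Omega^{i-1}C,M)$ and then invoking that the syzygies of the super finitely presented module $C$ are finitely $n$-presented so that $(n,d)$-injectivity of $M$ forces the vanishing, with part (2) handled by the dual $\mathrm{Tor}$ argument. Your explicit verification that each intermediate syzygy $L_k$ is finitely $n$-presented is a point the paper asserts without comment, so nothing is missing.
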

\begin{proof}
(1) Let $K_{d-1}$ be a special semidualizing with respect to super finitely presented bimodule $C$. Then $C$ has an infinite finite presentation
$$\cdots \longrightarrow F_d\stackrel{f_{d}}\longrightarrow F_{d-1}\longrightarrow\cdots \longrightarrow F_1\stackrel{f_1}\longrightarrow F_0\stackrel{f_0}\longrightarrow C\stackrel{f_{-1}}\longrightarrow 0.$$
Thus ${\rm Ext}_{S}^{d+j+1}(C,M)\cong {\rm Ext}_{S}^{d+1}({\rm Ker}f_{j-1},M)$ for any $j\geq 0$. Since $M$ is $(n,d)$-injective and ${\rm Ker}f_{j-1}$ is finitely $n$-presented, we have 
${\rm Ext}_{S}^{d+j+1}(C,M)\cong {\rm Ext}_{S}^{d+1}({\rm Ker}f_{j-1},M)=0.$ Also, we have ${\rm Ext}_{S}^{d+j+1}(C,M)\cong {\rm Ext}_{S}^{j+1}(K_{d-1},M).$ Hence ${\rm Ext}_{S}^{j+1}(K_{d-1},M)=0$ for any $j\geq 0$, and so ${\rm Ext}_{S}^i(K_{d-1},M)=0$ for any $i\geq 1$.

 (2) It is similar to the proof of (1).
\end{proof}

We denote the character module of $M$ by $M^{*}:= \operatorname{Hom}_{\mathbb{Z}}(M, {\mathbb{Q}}/{\mathbb { Z}})$ \cite[Page 135]{Rot2}.

When $R$ is a commutative ring, it follows from \cite[Proposition 7.2 and Remark 4]{HW}  that $M\in\mathcal{A}_{K_{d-1}}(R)$ if and only if $M^{*}\in\mathcal{B}_{K_{d-1}}(R^{op}),$ and $M\in\mathcal{B}_{K_{d-1}}(R)$ if and only if $M^{*}\in\mathcal{A}_{K_{d-1}}(R^{op})$. In the following proposition, it is checked for a non-commutative ring.

\begin{proposition}\label{3-5-A-BB}
The following assertions hold true:
\begin{enumerate}
\item [\rm (1)] $M\in\mathcal{A}_{K_{d-1}}(R)$ if and only if $M^{*}\in\mathcal{B}_{K_{d-1}}(R^{op})$;
\item [\rm (2)] $M\in\mathcal{B}_{K_{d-1}}(R)$ if and only if $M^{*}\in\mathcal{A}_{K_{d-1}}(R^{op})$.
\end{enumerate}
\end{proposition}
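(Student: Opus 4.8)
The plan is to prove part (1) by matching, one by one, the three defining conditions $(A_1)$--$(A_3)$ of $\mathcal{A}_{K_{d-1}}(R)$ against the conditions $(B_1)$--$(B_3)$ of $\mathcal{B}_{K_{d-1}}(R^{op})$ through the character-module functor $(-)^{*}={\rm Hom}_{\mathbb{Z}}(-,\mathbb{Q}/\mathbb{Z})$, exactly as in the commutative case of Holm--White; part (2) will then follow by the symmetric argument. Write $K:=K_{d-1}$, viewed as the $(S,R)$-bimodule $_SK_R$ or, equivalently, as $_{R^{op}}K_{S^{op}}$. The two facts that make the whole scheme run are that $\mathbb{Q}/\mathbb{Z}$ is a faithfully injective cogenerator, so that $(-)^{*}$ is exact, detects vanishing ($N=0$ iff $N^{*}=0$) and reflects isomorphisms ($f$ is an isomorphism iff $f^{*}$ is); and that $K$ is super finitely presented on both sides, so by Definition~\ref{1.lk-3} it admits degreewise finite projective resolutions as a left $S$-module and as a right $R$-module.

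First I would record the natural isomorphisms on which everything rests. The tensor--hom adjunction gives $(K\otimes_{R}M)^{*}\cong{\rm Hom}_{R^{op}}(K,M^{*})$. Resolving $K$ by finitely generated projective right $R$-modules $Q_{\bullet}$ and applying the exact functor $(-)^{*}$ to the complex $Q_{\bullet}\otimes_{R}M$ upgrades this, via the same adjunction in each degree, to
$${\rm Tor}_{i}^{R}(K,M)^{*}\cong{\rm Ext}_{R^{op}}^{i}(K,M^{*})\qquad(i\geq 0).$$
Dually, resolving $K$ by finitely generated projective left $S$-modules $P_{\bullet}$ and using the standard isomorphism ${\rm Hom}_{S}(P,L)^{*}\cong L^{*}\otimes_{S}P$ for finitely generated projective $P$ (here the finiteness in condition $(a_1)$ is essential), together with the symmetry ${\rm Tor}_{i}^{S}(L^{*},K)\cong{\rm Tor}_{i}^{S^{op}}(K,L^{*})$, one obtains
$${\rm Ext}_{S}^{i}(K,L)^{*}\cong{\rm Tor}_{i}^{S^{op}}(K,L^{*})\qquad(i\geq 0)$$
for every left $S$-module $L$. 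Specializing the latter to $L=K\otimes_{R}M$ and inserting the adjunction isomorphism rewrites ${\rm Ext}_{S}^{i}(K,K\otimes_{R}M)^{*}$ as ${\rm Tor}_{i}^{S^{op}}(K,{\rm Hom}_{R^{op}}(K,M^{*}))$. Since $(-)^{*}$ detects vanishing, the first display yields $(A_1)\Leftrightarrow(B_1)$ and the second yields $(A_2)\Leftrightarrow(B_2)$.

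The last and most delicate step is the equivalence $(A_3)\Leftrightarrow(B_3)$. The idea is to identify the character dual $(\mu_{M})^{*}$ of the Auslander evaluation map $\mu_{M}\colon M\to{\rm Hom}_{S}(K,K\otimes_{R}M)$ with the Bass evaluation map $\nu_{M^{*}}\colon K\otimes_{S^{op}}{\rm Hom}_{R^{op}}(K,M^{*})\to M^{*}$: the degree-zero cases of the two displays above identify the source ${\rm Hom}_{S}(K,K\otimes_{R}M)^{*}$ with $K\otimes_{S^{op}}{\rm Hom}_{R^{op}}(K,M^{*})$, while both maps have target $M^{*}$. The hard part will be verifying, by a careful element-wise chase through the adjunction and the tensor-evaluation isomorphisms, that the resulting square commutes, i.e. that $(\mu_{M})^{*}$ really equals $\nu_{M^{*}}$ under these identifications; this is the only place where one must be genuinely attentive to the bimodule actions. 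Once the identification is in place, the fact that $(-)^{*}$ reflects isomorphisms shows that $\mu_{M}$ is an isomorphism iff $(\mu_{M})^{*}=\nu_{M^{*}}$ is, completing $(A_3)\Leftrightarrow(B_3)$ and hence part (1). For part (2) I would run the identical argument with the roles of $\mathcal{A}$ and $\mathcal{B}$, and of $R$ and $S$, interchanged, again invoking that $(-)^{*}$ is exact and reflects both vanishing and isomorphisms.
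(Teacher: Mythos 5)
Your proposal is correct and follows essentially the same route as the paper: both transfer the three Auslander conditions into the three Bass conditions through $(-)^{*}$, using the degreewise finite projective resolutions of $K_{d-1}$, the adjunction $(K_{d-1}\otimes_{R}M)^{*}\cong{\rm Hom}_{R^{op}}(K_{d-1},M^{*})$, the Hom--tensor duality ${\rm Hom}(P,L)^{*}\cong L^{*}\otimes P$ for finitely generated projectives, and the fact that $(-)^{*}$ is exact and detects both vanishing and isomorphisms. If anything you are more explicit than the paper about the one delicate point --- that the composite identification of ${\rm Hom}_{S}(K_{d-1},K_{d-1}\otimes_{R}M)^{*}$ with $K_{d-1}\otimes_{S^{op}}{\rm Hom}_{R^{op}}(K_{d-1},M^{*})$ really carries $(\mu_{M})^{*}$ to the canonical evaluation $\nu_{M^{*}}$ --- which the paper passes over by simply writing a chain of isomorphisms ending in $M^{*}\cong K_{d-1}\otimes_{R^{op}}{\rm Hom}_{R^{op}}(K_{d-1},M^{*})$.
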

\begin{proof}
(1). $(\Rightarrow)$ Consider the exact sequence  $\mathcal{Y}=\cdots\longrightarrow F_1\longrightarrow F_0\longrightarrow K_{d-1}\longrightarrow 0$ of $R$-modules, where each $F_{i}$ is finitely generated and free. If $M\in\mathcal{A}_{K_{d-1}}(R)$, then $\operatorname{Tor}_{i}^{R}(K_{d-1},M)=0$ for any $i\geq 1.$ Hence by \cite[Lemma 3.53]{Rot2}, $(\mathcal{Y}\otimes_{R}M)^{*}$ is an exact sequence.  So  by \cite[Theorem 2.76 ]{Rot2}, it is easy
to check that $0=\operatorname{Tor}_{i}^{R}(K_{d-1}, M)^{*}\cong \operatorname{Ext}_{R^{op}}^{i}(K_{d-1}, M^{*})$ for any $i\geq 1.$

On the other hand, we have $\operatorname{Ext}_{R}^{i}(K_{d-1}, K_{d-1}\otimes_{R}M)=0 $ for any $i\geq 1$, and  so ${\rm Hom}_{R}(\mathcal{Y},K_{d-1}\otimes_{R}M)$ is exact. By \cite[Lemma 3.53]{Rot2},  we deduce that  ${\rm Hom}_{R}(\mathcal{Y},K_{d-1}\otimes_{R}M)^{*}$ is exact.   By \cite[Lemma 3.55 and Propositions 2.56]{Rot2},  ${\rm Hom}_{R}(\mathcal{Y},K_{d-1}\otimes_{R}M)^{*}\cong (K_{d-1}\otimes_{R}M)^{*}\otimes_{R} \mathcal{Y}\cong\mathcal{Y}\otimes_{R^{op}}(K_{d-1}\otimes_{R}M)^{*} $. So  $\mathcal{Y}\otimes_{R^{op}}{\rm Hom}_{R^{op}}(K_{d-1},M^{*})$ is exact, and then   $\operatorname{Tor}_{i}^{R^{op}}(K_{d-1}, \operatorname{Hom}_{R^{op}}(K_{d-1}, M^{*}))= 0$ for all $i\geq 1$.    On the other hand, we have $M\cong{\rm Hom}_{R}(K_{d-1},K_{d-1}\otimes_{R}M)$. So by \cite[Lemma 3.55 and Propositions 2.56 and 2.76]{Rot2}, $M^{*}\cong{\rm Hom}_{R }(K_{d-1},K_{d-1}\otimes_{R}M)^{*}\cong (K_{d-1}\otimes_{R}M)^{*}\otimes_{R}K_{d-1}\cong K_{d-1}\otimes_{R^{op}}(K_{d-1}\otimes_{R}M)^{*}\cong K_{d-1}\otimes_{R^{op}}{\rm Hom}_{R^{op}}(K_{d-1},M^{*})$.  Then, it follows that $M^{*}\in\mathcal{B}_{K_{d-1}}(R^{op})$.

$(\Leftarrow)$ Consider the exact sequence  $\mathcal{Y}=\cdots\longrightarrow F_1\longrightarrow F_0\longrightarrow K_{d-1}\longrightarrow 0$ of $R^{op}$-modules, where each $F_{i}$ is finitely generated and free. If $M^{*}\in\mathcal{B}_{K_{d-1}}(R^{op})$, then $\operatorname{Ext}^{i}_{R^{op}}(K_{d-1},M^{*})=0$ for any $i\geq 1,$  and so ${\rm Hom}_{R^{op}}(\mathcal{Y},M^{*})$ is exact.
 So by \cite[Theorem 2.76]{Rot2}, $(\mathcal{Y}\otimes_{R}M)^{*}$ is exact and then  by \cite[Lemma 3.53]{Rot2},  $(\mathcal{Y}\otimes_{R}M)$ is exact. So  $\operatorname{Tor}_{i}^{R}(K_{d-1}, M)=0$ for any $i\geq 1$. Also, we have $\operatorname{Tor}_{i}^{R^{op}}(\mathcal{Y}, {\rm Hom}_{R^{op}}(K_{d-1},M^{*}))=0$ for any $i\geq 1$, and then it follows that   $\mathcal{Y}\otimes_{R^{op}}{\rm Hom}_{R^{op}}(K_{d-1},M^{*})$ is exact. Hence by \cite[Theorem 2.76]{Rot2},   $\mathcal{Y}\otimes_{R^{op}}(K_{d-1}\otimes_{R}M)^{*}$ is exact.  Consequently by \cite[Lemma 3.55 and Proposition  2.56]{Rot2},  ${\rm Hom}_{R}(\mathcal{Y},K_{d-1}\otimes_{R}M)^{*}$ is exact, and then  ${\rm Hom}_{R}(\mathcal{Y},K_{d-1}\otimes_{R}M)$ is exact. So $\operatorname{Ext}_{R}^{i}(K_{d-1}, K_{d-1}\otimes_{R}M)=0$ for any $i\geq 1$.  Since $M^{*}\in\mathcal{B}_{K_{d-1}}(R^{op})$, we have $M^{*}\cong K_{d-1}\otimes_{R^{op}}{\rm Hom}_{R^{op}}(K_{d-1},M^{*})\cong K_{d-1}\otimes_{R^{op}}(K_{d-1}\otimes_{R}M)^{*}\cong (K_{d-1}\otimes_{R}M)^{*}\otimes_{R}K_{d-1}\cong {\rm Hom}_{R}(K_{d-1},K_{d-1}\otimes_{R}M)^{*}$, and so $M\cong{\rm Hom}_{R}(K_{d-1},K_{d-1}\otimes_{R}M)$. Then, we get that  $M\in\mathcal{A}_{K_{d-1}}(R)$.

(2). The proof is similar to that of (i).
\end{proof}

\begin{theorem}\label{1.b1}
 The following statements hold.
\begin{enumerate}
\item [\rm (1)]
 $\mathcal{I}^{(n,d)}(S)\subseteq\mathcal{B}_{K_{d-1}}(S)$;  
 \item [\rm (2)] 
 $\mathcal{F}^{(n,d)}(R)\subseteq\mathcal{A}_{K_{d-1}}(R).$
 \end{enumerate}
\end{theorem}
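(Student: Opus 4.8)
The plan is to verify, for each $M\in\mathcal{I}^{(n,d)}(S)$ (resp. $N\in\mathcal{F}^{(n,d)}(R)$), the three defining conditions of the Bass class $\mathcal{B}_{K_{d-1}}(S)$ (resp. of the Auslander class $\mathcal{A}_{K_{d-1}}(R)$); write $K:=K_{d-1}$. I would carry out (1) in full and obtain (2) by the dual argument (interchanging $S$ and $R$, $\operatorname{Hom}$ and $\otimes$, injectives and flats, $\operatorname{Ext}$ and $\operatorname{Tor}$), or else deduce (2) from (1) through the character-module duality of Proposition~\ref{3-5-A-BB}, after noting that the character module of an $(n,d)$-flat module is $(n,d)$-injective over the opposite ring. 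For (1), condition $(B_1)$, namely $\operatorname{Ext}_S^i(K,M)=0$ for $i\geq 1$, is exactly Lemma~\ref{1.b-1}(1); so the content is $(B_2)$ and $(B_3)$, and the central tools are the two degreewise finite free resolutions of $K$ (one over $S$ from $(a_1)$, one over $R^{op}$ from $(a_2)$) together with the tensor-evaluation isomorphisms, which are genuine isomorphisms here precisely because those resolutions are finitely generated free in each degree.

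First I would record the clean sub-case and isolate the obstruction. Let $Q_\bullet\to K$ be a resolution of the right $R$-module $K$ by finitely generated free right $R$-modules. Applying $\operatorname{Hom}_{R^{op}}(-,K)$ computes $\operatorname{Ext}_{R^{op}}^{\ast}(K,K)$, which by $(b_1)$ and $(c_2)$ equals $S$ in degree $0$ and vanishes above it; hence
\[
0\to S\to\operatorname{Hom}_{R^{op}}(Q_0,K)\to\operatorname{Hom}_{R^{op}}(Q_1,K)\to\cdots
\]
is an exact coresolution of the left $S$-module $S$ whose terms $\operatorname{Hom}_{R^{op}}(Q_i,K)\cong K^{m_i}$ are finite direct sums of copies of $K$ and whose cosyzygies are super finitely presented by the two-out-of-three property of super finitely presented modules. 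If $M$ were injective, or merely weak injective or $FP_n$-injective, then $\operatorname{Hom}_S(-,M)$ would keep this sequence exact (the cosyzygies being super finitely presented, hence finitely $n$-presented, would kill $\operatorname{Ext}_S^1(-,M)$); identifying $\operatorname{Hom}_S(\operatorname{Hom}_{R^{op}}(Q_i,K),M)\cong Q_i\otimes_R\operatorname{Hom}_S(K,M)$ by tensor-evaluation then turns the resulting exact complex into $\cdots\to Q_1\otimes_R\operatorname{Hom}_S(K,M)\to Q_0\otimes_R\operatorname{Hom}_S(K,M)\to M\to 0$, which reads off as $(B_2)$ and $(B_3)$ at once. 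The obstacle is exactly that for $d\geq 1$ an $(n,d)$-injective $M$ gives only the vanishing of $\operatorname{Ext}_S^{d+1}(-,M)$ against finitely $n$-presented modules, not of $\operatorname{Ext}_S^1(-,M)$, so this direct computation cannot be applied to the cosyzygies above.

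To get past this I would dimension-shift through a truncated injective coresolution. Choose $0\to M\to E^0\to\cdots\to E^{d-1}\to L\to 0$ with each $E^j$ injective; a standard shift shows the $j$-th cosyzygy of an $(n,d)$-injective module is $(n,d-j)$-injective, so $L$ is $(n,0)$-injective, i.e. $FP_n$-injective, and hence lies in $\mathcal{B}_{K}(S)$ by the sub-case above. Using the special syzygy structure of $K=K_{d-1}$ exactly as in Lemma~\ref{1.b-1}, every term and every cosyzygy of this sequence satisfies $\operatorname{Ext}_S^{i}(K,-)=0$ for $i\geq 1$, so $\operatorname{Hom}_S(K,-)$ preserves its exactness; its interior terms $\operatorname{Hom}_S(K,E^j)$ and $\operatorname{Hom}_S(K,L)$ are $\operatorname{Tor}$-acyclic against $K$ (this is $(B_2)$ for the $E^j$ and for $L$, which already lie in $\mathcal{B}_{K}(S)$). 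Tensoring the resulting exact sequence back with $K$ and running the long exact $\operatorname{Tor}$-sequence down through its cosyzygies yields $\operatorname{Tor}_i^R(K,\operatorname{Hom}_S(K,M))=0$ for $i\geq 1$, which is $(B_2)$; and comparing the tensored sequence with the original injective coresolution via the naturality of the evaluation map $\nu$ (using $K\otimes_R\operatorname{Hom}_S(K,E^j)\cong E^j$) identifies $K\otimes_R\operatorname{Hom}_S(K,M)$ with $\ker(E^0\to E^1)=M$, which is $(B_3)$.

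The main difficulty is thus the $d\geq 1$ gap between the vanishing of $\operatorname{Ext}^{d+1}$ and of $\operatorname{Ext}^1$ that $(n,d)$-injectivity provides; it is bridged by the truncated injective coresolution, which trades $d$ steps of injective dimension for a drop to the $FP_n$-injective base case, where the direct tensor-evaluation computation applies. The remaining ingredients — placing every $\operatorname{Hom}$, $\otimes$, $\operatorname{Ext}$, $\operatorname{Tor}$ and evaluation morphism in the correct one-sided module category, and confirming that the relevant tensor-evaluation maps are isomorphisms — are routine, the latter being automatic because the semidualizing hypothesis supplies degreewise finite free resolutions of $K$ on both sides. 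Part (2) then follows by the dual run of the same argument, with flats and $\operatorname{Tor}$ in place of injectives and $\operatorname{Ext}$.
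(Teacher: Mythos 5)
Your proof is correct, and much of it coincides with the paper's own argument: establishing $(B_1)$ via Lemma~\ref{1.b-1}(1), building the coresolution $0\to S\to\operatorname{Hom}_{R^{op}}(Q_0,K_{d-1})\to\cdots$ of $S$ by finite direct sums of copies of $K_{d-1}$ with (super) finitely presented cosyzygies, the tensor-evaluation identification $\operatorname{Hom}_S(\operatorname{Hom}_{R^{op}}(Q_i,K_{d-1}),M)\cong Q_i\otimes_R\operatorname{Hom}_S(K_{d-1},M)$, the five lemma for $(B_3)$, and the deduction of (2) from (1) through character modules, \cite[Proposition 2.3]{TX} and Proposition~\ref{3-5-A-BB} are all exactly what the paper does. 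Where you genuinely diverge is in how the gap between $\operatorname{Ext}^{d+1}$-vanishing and $\operatorname{Ext}^{1}$-vanishing is bridged. The paper never leaves the coresolution of $S$: since each middle term $\bigoplus K_{d-1}$ is $\operatorname{Ext}_S$-acyclic against $M$ by Lemma~\ref{1.b-1}(1), a $d$-fold dimension shift \emph{inside that coresolution} gives $\operatorname{Ext}_S^{1}(D_k,M)\cong\operatorname{Ext}_S^{d+1}(D_{k+d},M)=0$ directly from the $(n,d)$-injectivity of $M$ against the finitely $n$-presented cosyzygy $D_{k+d}$, after which the $d=0$ computation runs verbatim and yields $(B_2)$ and $(B_3)$ in one pass. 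You instead shift along a truncated injective coresolution $0\to M\to E^0\to\cdots\to E^{d-1}\to L\to 0$ of $M$, reduce to the $FP_n$-injective cosyzygy $L$ (your base case, essentially the $d=0$ result of Wu--Gao), and transport $(B_2)$ and $(B_3)$ back down the ladder. Your route is more modular and makes the logical dependence on the $d=0$ case explicit, at the price of the extra bookkeeping you correctly identify: checking that every term and cosyzygy of the injective coresolution is $\operatorname{Ext}_S(K_{d-1},-)$-acyclic (which follows from $\operatorname{Ext}_S^{i+j}(K_{d-1},M)=0$, i.e.\ Lemma~\ref{1.b-1}(1)), propagating the $\operatorname{Tor}$-vanishing through the cosyzygies, and then comparing the tensored ladder with the original coresolution via the naturality of $\nu$. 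Both arguments are sound; the paper's is more self-contained, yours isolates the base case as a reusable black box.
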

\begin{proof}
(1) If $M\in\mathcal{I}^{(n,d)}(S)$,  then by Lemma \ref{1.b-1}(1), 
 ${\rm Ext}_{S}^{i}(K_{d-1},M)=0$ for any $i\geq 1$. Now,  we show that ${\rm Tor}^{R}_{i}(K_{d-1}, {\rm Hom}_{S}(K_{d-1},M))=0$ for every  any $i\geq 1$. There exists an exact sequence
 $$ \cdots\longrightarrow F_{d+2}\longrightarrow F_{d+1}\longrightarrow F_{d}\longrightarrow K_{d-1}\longrightarrow 0$$ of projective  $R^{op}$-modules, where each $F_j$ is  finitely generated for any $j\geq d$.  On the other hand, ${\rm Ext}^{i}_{R^{op}}(K_{d-1},K_{d-1})=0$ for any $i\geq 1$, so
 we
have the exact sequence
 $$0\longrightarrow {\rm Hom}_{R^{op}}(K_{d-1},K_{d-1})\longrightarrow{\rm Hom}_{R^{op}}(F_{d},K_{d-1})\longrightarrow{\rm Hom}_{R^{op}}(F_{d+1},K_{d-1})\longrightarrow\cdots$$
 of $S$-modules from applying the functor ${\rm Hom}_{R^{op}}(-,K_{d-1})$ to the above exact sequence. Note that
 $S\cong{\rm Hom}_{R^{op}}(K_{d-1},K_{d-1})$, and for all $t\geq d$ there exists an integer $m_{t}$ such that
  ${\rm Hom}_{R^{op}}(F_{t},K_{d-1}) \cong\bigoplus_{l=1}^{m_t}K_{d-1}.$  Therefore by \cite[Proposition 1.7]{ARB}, there is exact sequences 
 $$0\longrightarrow S\longrightarrow\bigoplus_{l=1}^{m_d}K_{d-1}\longrightarrow D\longrightarrow 0$$
 $$0\longrightarrow D_k\longrightarrow\bigoplus_{l=1}^{m_{d+k+2}}K_{d-1}\longrightarrow D_{k+1}\longrightarrow 0$$
 
 of finitely $n$-presented $S$-modules, where for  $k\geq 0$
 $$D={\rm Coker}({\rm Hom}_{R^{op}}(K_{d-1},K_{d-1})\longrightarrow{\rm Hom}_{R^{op}}(F_{d},K_{d-1}))$$ and 
 $$D_{k}={\rm Coker}({\rm Hom}_{R^{op}}(F_{d+k},K_{d-1})\longrightarrow{\rm Hom}_{R^{op}}(F_{d+k+1},K_{d-1})).$$
 Consider the exact sequence
  $$0 \longrightarrow D\longrightarrow \bigoplus_{l=1}^{m_{d+1}}K_{d-1}\longrightarrow \bigoplus_{l=1}^{m_{d+2}}K_{d-1}\longrightarrow \cdots\longrightarrow  \bigoplus_{l=1}^{m_{2d}}K_{d-1}\longrightarrow D_{d-1}\longrightarrow 0.$$
Since $M\in\mathcal{I}^{(n,d)}(S)$, we have
 ${\rm Ext}_{S}^{d+1}( D_{d-1},M)=0$ as $ D_{d-1}$ is finitely $n$-presented $S$-module, and ${\rm Ext}_{S}^{i}(\bigoplus_{l=1}^{m_{t}}K_{d-1},M)=\cong\prod_{l=1}^{m_t}{\rm Ext}_{S}^{i}(K_{d-1},M)=0$ for any $i\geq 1$ by Lemma \ref{1.b-1}(1). It is easy to check that ${\rm Ext}_{S}^{1}( D,M)\cong {\rm Ext}_{S}^{d+1}( D_{d-1},M)=0$.
Step by step, we get that ${\rm Ext}_{S}^{1}(D_k,M)=0$ for any $k\geq 0$, and 
  so we obtain the exact sequence
 sequence
\[\cdots\longrightarrow \operatorname{Hom}_S(\operatorname{Hom}_{R^{op}}(F_d, K_{d-1}), M)\longrightarrow \operatorname{Hom}_S(\operatorname{Hom}_{R^{op}}(K_{d-1}, K_{d-1}), M)\longrightarrow 0.\]
 Thus by \cite[Lemm 3.5]{Rot2}, we have the commutative diagram
\[
\xymatrix{
\cdots\ar[r]&F_d\otimes_{R} \operatorname{Hom}_S(K_{d-1}, M)\ar[d]^{\cong}\ar[r]&K_{d-1}\otimes_{R} \operatorname{Hom}_S(K_{d-1}, M)\ar[d]^{\nu_{M}}\ar[r]&0\\
\cdots\ar[r]&\operatorname{Hom}_S(\operatorname{Hom}_{R^{op}}(F_d, K_{d-1}), M)\ar[r]&M\ar[r]&0
}
\]
from \cite[1.11]{HW} and the fact that $\operatorname{Hom}_S(\operatorname{Hom}_{R^{op}}(K_{d-1}, K_{d-1}), M)\cong \operatorname{Hom}_S(S, M)\cong M$. Hence the sequence
\[\cdots\longrightarrow F_d\otimes_{R} \operatorname{Hom}_S(K_{d-1}, M)\longrightarrow K_{d-1}\otimes_{R} \operatorname{Hom}_S(K_{d-1}, M)\longrightarrow 0\]
is exact and so $\operatorname{Tor}^{R}_{i}(K_{d-1}, \operatorname{Hom}_{S}(K_{d-1}, M))= 0$ for all $i\geq 1$. Also, by the five lemma, the natural evaluation homomorphism $\nu_{M}: K_{d-1}\otimes_{R} \operatorname{Hom}_{S}(K_{d-1}, M)\longrightarrow M$ is an isomorphism. Thus $M\in \mathcal{B}_{K_{d-1}}(S)$ and so $\mathcal{I}^{(n,d)}(S)\subseteq \mathcal{B}_{K_{d-1}}(S)$.

(2)  Let $M\in\mathcal{F}^{(n,d)}(R)$. Then by \cite[Proposition 2.3]{TX}, $M^{*}\in\mathcal{I}^{(n,d)}(R^{op})$. So by (1), $M^{*}\in\mathcal{B}_{K_{d-1}}(R^{op})$. Hence by Proposition \ref{3-5-A-BB}(1), it follows that $M\in\mathcal{A}_{K_{d-1}}(R)$.   
\end{proof}

Let $k$ be a non-negative integer. For convenience, we set
\begin{enumerate}
\item [\rm (i)]
$\mathcal{I}^{(n,d)}(S)_{\leq k}$ = the class of   $S$-modules with 
$(n,d)$-injective dimension at most $k$.
\item [\rm (ii)]
$\mathcal{F}^{(n,d)}(R)_{\leq k}$ = the class of   $R$-modules with 
$(n,d)$-flat dimension at most $k$.
\end{enumerate}
\begin{corollary}\label{2.e1} 
Let
$K_{d-1}$ be a  special faithfully semidualizing bimodule. Then
 $\mathcal{I}^{(n,d)}(S)_{< \infty}\subseteq\mathcal{B}_{k_{d-1}}(S)$ and $\mathcal{F}^{(n,d)}(R)_{< \infty}\subseteq\mathcal{A}_{k_{d-1}}(R).$ 
\end{corollary}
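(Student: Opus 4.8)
The plan is to deduce the corollary from Theorem \ref{1.b1}, which disposes of the dimension-zero case, by a standard dimension shift; the extra hypothesis that $K_{d-1}$ is faithfully semidualizing is exactly what makes available the closure property of the Bass and Auslander classes on which the induction rests. Throughout we stay in the case $n>d+1$, $d\geq 1$ fixed in Remark \ref{1.pi3}(6).

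For (1), I would induct on $k:=(n,d).\mathrm{id}_S(M)$ as defined in Definition \ref{2.bbb}. When $k=0$ we have $M\in\mathcal{I}^{(n,d)}(S)$, so $M\in\mathcal{B}_{K_{d-1}}(S)$ by Theorem \ref{1.b1}(1). For $k\geq 1$, embed $M$ in an injective $S$-module $E$ and set $D=E/M$, giving a short exact sequence $0\to M\to E\to D\to 0$. Since $\mathrm{Ext}_S^{\,i}(U,E)=0$ for all $i\geq 1$ and all finitely $n$-presented $U$, the module $E$ is $(n,d)$-injective, hence $E\in\mathcal{B}_{K_{d-1}}(S)$ by Theorem \ref{1.b1}(1). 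Because $d\geq 1$, the long exact sequence in $\mathrm{Ext}_S(U,-)$ yields $\mathrm{Ext}_S^{\,d+j}(U,D)\cong\mathrm{Ext}_S^{\,d+j+1}(U,M)$ for every $j\geq 0$ and every finitely $n$-presented $U$, so $(n,d).\mathrm{id}_S(D)=k-1$ and $D\in\mathcal{B}_{K_{d-1}}(S)$ by the inductive hypothesis. As $M$ is the kernel of the epimorphism $E\to D$, the conclusion $M\in\mathcal{B}_{K_{d-1}}(S)$ follows once we know that $\mathcal{B}_{K_{d-1}}(S)$ is closed under kernels of epimorphisms.

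For (2), rather than repeat the argument I would reduce to (1) by passing to character modules, mirroring the proof of Theorem \ref{1.b1}(2). Given $N$ with $(n,d).\mathrm{fd}_R(N)=k<\infty$, the adjunction isomorphism $\mathrm{Tor}^R_{\,d+k+1}(U,N)^{*}\cong\mathrm{Ext}^{\,d+k+1}_{R^{op}}(U,N^{*})$ of \cite[Theorem 2.76]{Rot2}, valid for every finitely $n$-presented $R^{op}$-module $U$, shows that $(n,d).\mathrm{id}_{R^{op}}(N^{*})\leq k$, i.e. $N^{*}\in\mathcal{I}^{(n,d)}(R^{op})_{<\infty}$. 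Applying part (1) over $R^{op}$ gives $N^{*}\in\mathcal{B}_{K_{d-1}}(R^{op})$, and Proposition \ref{3-5-A-BB}(1) then yields $N\in\mathcal{A}_{K_{d-1}}(R)$, as desired. (Alternatively one can run the dual induction directly: choose a surjection $P\twoheadrightarrow N$ with $P$ flat, whence $P\in\mathcal{F}^{(n,d)}(R)\subseteq\mathcal{A}_{K_{d-1}}(R)$; the syzygy in $0\to N'\to P\to N\to 0$ has $(n,d).\mathrm{fd}_R(N')=k-1$, and $N$, being the cokernel of the monomorphism $N'\hookrightarrow P$, lands in $\mathcal{A}_{K_{d-1}}(R)$ provided that class is closed under cokernels of monomorphisms.)

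The one genuinely delicate step, common to both parts, is the closure property invoked at the end, and this is precisely where faithfulness enters. From the vanishing conditions $B_1$ and $A_1$ alone one checks easily that $\mathcal{B}_{K_{d-1}}$ is closed under cokernels of monomorphisms and $\mathcal{A}_{K_{d-1}}$ under kernels of epimorphisms; the opposite closures that the induction needs, namely kernels of epimorphisms for the Bass class and cokernels of monomorphisms for the Auslander class, fail for a general semidualizing bimodule. Concretely, in part (1) the obstruction is surjectivity of $\mathrm{Hom}_S(K_{d-1},E)\to\mathrm{Hom}_S(K_{d-1},D)$ (equivalently, in the Auslander case, injectivity of the induced $K_{d-1}\otimes_R(-)$ map), which is not forced by the Ext/Tor vanishing and must be extracted from the evaluation isomorphisms $\mu$, $\nu$ together with the faithfulness axioms of Definition \ref{1.lk-3}(2). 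This two-out-of-three property for faithfully semidualizing bimodules is the crux; granting it (from Holm--White \cite{HW}), everything else is routine dimension shifting.
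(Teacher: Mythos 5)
Your argument is correct and follows essentially the same route as the paper: the paper's proof is the one-line observation that the corollary follows from Theorem \ref{1.b1} (the dimension-zero case) together with the Holm--White closure (two-out-of-three) property of $\mathcal{A}_{K_{d-1}}$ and $\mathcal{B}_{K_{d-1}}$ for a \emph{faithfully} semidualizing bimodule, which is exactly the dimension-shifting induction you carry out. Your reduction of the flat statement to the injective one via character modules and Proposition \ref{3-5-A-BB} also mirrors the paper's own proof of Theorem \ref{1.b1}(2).
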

\begin{proof}
It is clear by Theorem \ref{1.b1} and \cite[Theorem 6.3]{HW}.
\end{proof}

The following result plays a fundamental role in this paper. we investigate the relationship between classes $\mathcal{I}_{K_{d-1}}^{(n,d)}(R)$  and  $\mathcal{F}_{K_{m-1}}^{(n,d)}(S)$ with the Auslander class $\mathcal{A}_{K_{d-1}}(R)$ and the Bass class $\mathcal{B}_{K_{d-1}}(S)$, respectively.

\begin{proposition}\label{1.e1}
The following statements hold true:
\begin{enumerate}
\item [\rm (1)]
$M\in\mathcal{I}_{K_{d-1}}^{(n,d)}(R)$ if and only if $M\in\mathcal{A}_{K_{d-1}}(R)$ and $K_{d-1}\otimes_{R}M\in\mathcal{I}^{(n,d)}(S)$;
\item [\rm (2)]
 $N\in\mathcal{F}_{K_{d-1}}^{(n,d)}(S)$ if and only if $N\in\mathcal{B}_{K_{d-1}}(S)$ and ${\rm Hom}_{S}(K_{d-1},N)\in\mathcal{F}^{(n,d)}(R)$.
\end{enumerate}
\end{proposition}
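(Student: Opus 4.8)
The plan is to derive both equivalences from two ingredients already available in the excerpt: the inclusions $\mathcal{I}^{(n,d)}(S)\subseteq\mathcal{B}_{K_{d-1}}(S)$ and $\mathcal{F}^{(n,d)}(R)\subseteq\mathcal{A}_{K_{d-1}}(R)$ furnished by Theorem \ref{1.b1}, together with the Foxby equivalence for the semidualizing bimodule $K_{d-1}$ in the sense of Holm--White \cite{HW}. Recall that the latter says that the adjoint pair $\bigl(K_{d-1}\otimes_R-,\ \operatorname{Hom}_S(K_{d-1},-)\bigr)$ restricts to mutually inverse equivalences between $\mathcal{A}_{K_{d-1}}(R)$ and $\mathcal{B}_{K_{d-1}}(S)$, with the natural maps $\mu$ and $\nu$ of Definition \ref{1.lk-4} serving as the unit and counit isomorphisms on these classes. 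All four implications then reduce to deciding which half of this equivalence to invoke.

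For part (1), in the forward direction I would write $M=\operatorname{Hom}_S(K_{d-1},I)$ with $I\in\mathcal{I}^{(n,d)}(S)$. By Theorem \ref{1.b1}(1) we have $I\in\mathcal{B}_{K_{d-1}}(S)$, so Foxby equivalence gives at once $M=\operatorname{Hom}_S(K_{d-1},I)\in\mathcal{A}_{K_{d-1}}(R)$, while condition $(B_3)$ applied to $I$ makes $\nu_I\colon K_{d-1}\otimes_R M\to I$ an isomorphism; hence $K_{d-1}\otimes_R M\cong I\in\mathcal{I}^{(n,d)}(S)$. Conversely, assuming $M\in\mathcal{A}_{K_{d-1}}(R)$ and $I:=K_{d-1}\otimes_R M\in\mathcal{I}^{(n,d)}(S)$, condition $(A_3)$ for $M$ makes $\mu_M\colon M\to\operatorname{Hom}_S(K_{d-1},K_{d-1}\otimes_R M)=\operatorname{Hom}_S(K_{d-1},I)$ an isomorphism, so $M$ has the required form and lies in $\mathcal{I}_{K_{d-1}}^{(n,d)}(R)$.

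Part (2) is entirely dual, with the roles of $\otimes$ and $\operatorname{Hom}$, and of $\mathcal{A}$ and $\mathcal{B}$, interchanged. In the forward direction I would take $N=K_{d-1}\otimes_R F$ with $F\in\mathcal{F}^{(n,d)}(R)$; Theorem \ref{1.b1}(2) places $F$ in $\mathcal{A}_{K_{d-1}}(R)$, so Foxby equivalence yields $N\in\mathcal{B}_{K_{d-1}}(S)$, and condition $(A_3)$ for $F$ gives the isomorphism $\mu_F\colon F\to\operatorname{Hom}_S(K_{d-1},N)$, whence $\operatorname{Hom}_S(K_{d-1},N)\cong F\in\mathcal{F}^{(n,d)}(R)$. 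For the converse, if $N\in\mathcal{B}_{K_{d-1}}(S)$ and $F:=\operatorname{Hom}_S(K_{d-1},N)\in\mathcal{F}^{(n,d)}(R)$, then condition $(B_3)$ for $N$ makes $\nu_N\colon K_{d-1}\otimes_R F\to N$ an isomorphism, exhibiting $N$ as a $K_{d-1}$-$(n,d)$-flat module.

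I do not anticipate a genuine obstacle here: once Theorem \ref{1.b1} is in hand, the statement is essentially a bookkeeping matter of applying the correct half of the Foxby equivalence, the only point requiring care being to invoke the evaluation maps $\mu$ and $\nu$ precisely on those objects ($I$, $F$, $M$, $N$) whose membership in the relevant Auslander or Bass class guarantees the maps are isomorphisms. If one prefers not to cite the Foxby equivalence as a black box, the two forward directions can instead be verified by checking conditions $(A_1)$--$(A_3)$ (resp. $(B_1)$--$(B_3)$) by hand, reusing the vanishing $\operatorname{Ext}^i_S(K_{d-1},I)=0$ and $\operatorname{Tor}_i^R(K_{d-1},F)=0$ supplied by Lemma \ref{1.b-1}; that direct verification is the step that would demand the most attention.
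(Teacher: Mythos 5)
Your argument is correct and follows essentially the same route as the paper: the forward direction places $I$ (resp.\ $F$) in the Bass (resp.\ Auslander) class via Theorem \ref{1.b1} and then applies the Holm--White result that $\operatorname{Hom}_S(K_{d-1},-)$ and $K_{d-1}\otimes_R-$ carry these classes into one another, together with the counit/unit isomorphisms $\nu$ and $\mu$; the converse is exactly the paper's use of $\mu_M$ (resp.\ $\nu_N$) being an isomorphism on the Auslander (resp.\ Bass) class. The only cosmetic difference is that the paper cites \cite[Proposition 4.1]{HW} by name where you invoke the Foxby equivalence, and it leaves part (2) as ``similar'' where you spell it out.
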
 
\begin{proof}
(1) $(\Longrightarrow)$ Let $M\in\mathcal{I}_{K_{d-1}}^{(n,d)}(R)$. Then $M={\rm Hom}_{S}(K_{d-1},I)$ for some $I\in\mathcal{I}^{(n,d)}(S)$. By Theorem \ref{1.b1}(1), $I\in\mathcal{B}_{k_{d-1}}(S)$, and so by \cite[Proposition 4.1]{HW}, $M\in\mathcal{A}_{k_{d-1}}(R)$.
Also $K_{d-1}\otimes_{R}{\rm Hom}_{S}(K_{d-1},I)\cong I,$  and then we get that $$K_{d-1}\otimes_{R}M=K_{d-1}\otimes_{R}{\rm Hom}_{S}(K_{d-1},I)\in\mathcal{I}^{(n,d)}(S).$$

$(\Longleftarrow)$ Let $M\in\mathcal{A}_{K_{d-1}}(R)$ and $ K_{d-1}\otimes_{R}M \in\mathcal{I}^{(n,d)}(S)$. Since ${\rm Hom}_S(K_{d-1},K_{d-1}\otimes_{R}M)\cong M$, we deduce that   $M\in\mathcal{I}_{K_{d-1}}^{(n,d)}(R)$ by Definition \ref{1.a1}.

(2) The proof is similar to that of (1).
\end{proof}
 
\begin{proposition}\label{1.g1}
The following statements hold true:
\begin{enumerate}
\item [\rm (1)]
 $M\in\mathcal{I}_{K_{d-1}}^{(n,d)}(R)$ if and only if $M^*\in\mathcal{F}_{K_{d-1}}^{(n,d)}(R^{op})$;
 \item [\rm (2)]
 $N\in\mathcal{F}_{K_{d-1}}^{(n,d)}(S)$ if and only if $N^*\in\mathcal{I}_{K_{d-1}}^{(n,d)}(S^{op})$.
 \end{enumerate}
 \end{proposition}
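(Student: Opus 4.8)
The plan is to deduce both equivalences from the structural characterization already in hand rather than to compute directly with the defining forms $M={\rm Hom}_{S}(K_{d-1},I)$ and $N=K_{d-1}\otimes_{R}F$. The engine is Proposition \ref{1.e1}, which recasts membership in $\mathcal{I}_{K_{d-1}}^{(n,d)}$ and $\mathcal{F}_{K_{d-1}}^{(n,d)}$ as a conjunction of an Auslander/Bass condition with an ordinary $(n,d)$-injectivity/flatness condition. Each conjunct dualizes under $(-)^{*}$ by a result already available: the Auslander/Bass part by Proposition \ref{3-5-A-BB}, and the $(n,d)$-injective/flat part by the character-module duality of \cite[Proposition 2.3]{TX}, namely that a left module $X$ lies in $\mathcal{I}^{(n,d)}$ if and only if $X^{*}$ lies in $\mathcal{F}^{(n,d)}$ over the opposite ring (and dually), which itself follows from ${\rm Ext}_{S}^{d+1}(U,X)^{*}\cong{\rm Tor}_{d+1}^{S}(X^{*},U)$ together with the faithful exactness of $(-)^{*}$. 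The only remaining ingredient is to identify the transformed auxiliary module, and here I would invoke the two natural character-module isomorphisms ${(K_{d-1}\otimes_{R}X)^{*}\cong{\rm Hom}_{R^{op}}(K_{d-1},X^{*})}$ (pure Hom--tensor adjunction, needing no finiteness) and ${\rm Hom}_{S}(K_{d-1},Y)^{*}\cong K_{d-1}\otimes_{S^{op}}Y^{*}$ (valid because ${}_{S}K_{d-1}$ is finitely presented).

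For (1) I would run the chain of equivalences
\[
M\in\mathcal{I}_{K_{d-1}}^{(n,d)}(R)\iff M\in\mathcal{A}_{K_{d-1}}(R)\ \text{and}\ K_{d-1}\otimes_{R}M\in\mathcal{I}^{(n,d)}(S)
\]
supplied by Proposition \ref{1.e1}(1), and then dualize each conjunct: by Proposition \ref{3-5-A-BB}(1) the first is equivalent to $M^{*}\in\mathcal{B}_{K_{d-1}}(R^{op})$, while by \cite[Proposition 2.3]{TX} the second is equivalent to $(K_{d-1}\otimes_{R}M)^{*}\in\mathcal{F}^{(n,d)}(S^{op})$, which via the adjunction isomorphism reads ${\rm Hom}_{R^{op}}(K_{d-1},M^{*})\in\mathcal{F}^{(n,d)}(S^{op})$. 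Applying Proposition \ref{1.e1}(2) to the pair $(R^{op},S^{op})$ with the bimodule ${}_{R^{op}}(K_{d-1})_{S^{op}}$ then collapses these two conditions to $M^{*}\in\mathcal{F}_{K_{d-1}}^{(n,d)}(R^{op})$, which is the claim. Part (2) is entirely parallel with the roles of ${\rm Hom}$ and $\otimes$ interchanged: one starts from Proposition \ref{1.e1}(2), applies Proposition \ref{3-5-A-BB}(2) to the Bass/Auslander conjunct and the second character-module isomorphism ${\rm Hom}_{S}(K_{d-1},N)^{*}\cong K_{d-1}\otimes_{S^{op}}N^{*}$ to transport the flatness conjunct, and closes with Proposition \ref{1.e1}(1) on the opposite pair.

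The implications themselves are routine once the machinery is set up; the delicate part is the bookkeeping of the one- and two-sided structures under passage to the opposite rings. I must check that each displayed isomorphism is an isomorphism of modules over the correct ring, not merely of abelian groups, since the evaluation maps $\mu$ and $\nu$ defining $\mathcal{A}_{K_{d-1}}$ and $\mathcal{B}_{K_{d-1}}$ are maps of one-sided modules; this is exactly where naturality of the adjunction in the bimodule variable is used. A second point to record is that Proposition \ref{1.e1} and the definitions may legitimately be invoked for the pair $(R^{op},S^{op})$, which is justified because the semidualizing axioms in Definition \ref{1.lk-3} are symmetric in the two rings, so ${}_{R^{op}}(K_{d-1})_{S^{op}}$ is again a special semidualizing bimodule. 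Finally, I would note explicitly that the second character-module isomorphism uses the finiteness of ${}_{S}K_{d-1}$: applying ${\rm Hom}_{S}(-,N)$ and then $(-)^{*}$ to a finite presentation $S^{a}\to S^{b}\to K_{d-1}\to 0$ and comparing with $-\otimes_{S^{op}}N^{*}$ applied to the same presentation exhibits both sides as the cokernel of one and the same map $(N^{*})^{a}\to(N^{*})^{b}$, and this finiteness is guaranteed since $K_{d-1}$ is a syzygy of the super finitely presented bimodule $C$.
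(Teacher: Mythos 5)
Your argument is correct and follows essentially the same route as the paper: the backward implication is verbatim the paper's (Proposition \ref{1.e1} over the opposite pair, Proposition \ref{3-5-A-BB}, the character-module duality from \cite{TX}, and the adjunction/finite-presentation isomorphisms $(K_{d-1}\otimes_{R}M)^{*}\cong{\rm Hom}_{R^{op}}(K_{d-1},M^{*})$ and ${\rm Hom}_{S}(K_{d-1},I)^{*}\cong I^{*}\otimes_{S}K_{d-1}$). The only cosmetic difference is that you also route the forward implication through the Proposition \ref{1.e1} characterization, whereas the paper gets it in one line directly from the defining form $M={\rm Hom}_{S}(K_{d-1},I)$; both are fine.
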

\begin{proof}
(1) $(\Longrightarrow)$ Let $M\in\mathcal{I}_{K_{d-1}}^{(n,d)}(R)$. Then $M={\rm Hom}_{S}(K_{d-1},I)$ for some $I\in\mathcal{I}^{(n,d)}(S)$. By \cite[Proposition 3.1]{TX}, $I^*\in \mathcal{F}^{(n,d)}(S^{op})$. Since $K_{d-1}$ is finitely presented,  \cite[Lemma 3.55]{Rot2} implies that  $M^{*}={\rm Hom}_{S}(K_{d-1},I)^{*}\cong I^*\otimes_{S}K_{d-1}$, and so $M^*\in\mathcal{F}_{K_{d-1}}^{(n,d)}(R^{op})$.

$(\Longleftarrow)$ If $M^*\in\mathcal{F}_{K_{d-1}}^{(n,d)}(R^{op})$, then  Proposition \ref{1.e1}(2) implies that $M^{*}\in\mathcal{B}_{K_{d-1}}(R^{op})$ and ${\rm Hom}_{R^{op}}(K_{d-1},M^{*})\in\mathcal{F}^{(n,d)}(S^{op})$. By  Proposition \ref{3-5-A-BB}(1), it follows that $M\in\mathcal{A}_{K_{d-1}}(R)$.
Also, by \cite[Theorem 2.76]{Rot2}, ${\rm Hom}_{R^{op}}(K_{d-1},M^{*})\cong (K_{d-1}\otimes_{R}M)^*$. So by \cite[Proposition 3.1]{TX}, we get that $K_{d-1}\otimes_{R}M\in\mathcal{I}^{(n,d)}(S)$, and consequently by Proposition \ref{1.e1}(1), $M\in\mathcal{I}_{K_{d-1}}^{(n,d)}(R)$. 

(2) It is similar to the proof of (1) using \cite[Proposition 2.3]{TX} and Proposition \ref{3-5-A-BB}(2).
\end{proof}
\begin{corollary}\label{1.mu1}
The following statements hold true:
\begin{enumerate}
\item [\rm (1)]
 $M\in\mathcal{I}_{K_{d-1}}^{(n,d)}(R)$ if and only if $M^{**}\in\mathcal{I}_{K_{d-1}}^{(n,d)}(R)$;
 \item [\rm (2)]
 $N\in\mathcal{F}_{K_{d-1}}^{(n,d)}(S)$ if and only if $N^{**}\in\mathcal{F}_{K_{d-1}}^{(n,d)}(S)$.
 \end{enumerate}
 \end{corollary}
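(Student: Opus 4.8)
The plan is to obtain both equivalences as immediate consequences of Proposition \ref{1.g1}, applied twice, together with the elementary observation that forming the opposite ring is an involution, so that $(R^{op})^{op}=R$ and $(S^{op})^{op}=S$. Recall that if $M$ is a left $R$-module then its character module $M^{*}$ is naturally a left $R^{op}$-module, and hence $M^{**}$ is again a left $R$-module; this is exactly what makes the composite of two applications of Proposition \ref{1.g1} close up on the original module category, returning to $R$ (resp. $S$) rather than leaving us over a doubly-opposite ring.

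For part (1), I would first use Proposition \ref{1.g1}(1) to rewrite the condition $M\in\mathcal{I}_{K_{d-1}}^{(n,d)}(R)$ as $M^{*}\in\mathcal{F}_{K_{d-1}}^{(n,d)}(R^{op})$. Next I would apply Proposition \ref{1.g1}(2) with $S$ taken to be $R^{op}$, to the module $M^{*}$; this turns the condition $M^{*}\in\mathcal{F}_{K_{d-1}}^{(n,d)}(R^{op})$ into $(M^{*})^{*}=M^{**}\in\mathcal{I}_{K_{d-1}}^{(n,d)}((R^{op})^{op})=\mathcal{I}_{K_{d-1}}^{(n,d)}(R)$. Chaining the two equivalences yields precisely that $M\in\mathcal{I}_{K_{d-1}}^{(n,d)}(R)$ if and only if $M^{**}\in\mathcal{I}_{K_{d-1}}^{(n,d)}(R)$.

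For part (2) the argument is the mirror image: I would invoke Proposition \ref{1.g1}(2) to replace the condition $N\in\mathcal{F}_{K_{d-1}}^{(n,d)}(S)$ by $N^{*}\in\mathcal{I}_{K_{d-1}}^{(n,d)}(S^{op})$, and then apply Proposition \ref{1.g1}(1) with $R$ taken to be $S^{op}$, to the module $N^{*}$, obtaining $N^{**}\in\mathcal{F}_{K_{d-1}}^{(n,d)}((S^{op})^{op})=\mathcal{F}_{K_{d-1}}^{(n,d)}(S)$. Since each step is a purely formal equivalence furnished by Proposition \ref{1.g1}, I do not expect any genuine difficulty here; the corollary is essentially bookkeeping. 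The one point demanding attention is tracking the opposite-ring structures correctly, so that the second application of Proposition \ref{1.g1} is read in the right category and the two opposites cancel, delivering a statement about modules over $R$ (resp. $S$).
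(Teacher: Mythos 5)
Your proposal is correct and matches the paper's own proof, which simply cites Proposition \ref{1.g1}; you have merely spelled out the two applications of that proposition (the second over the opposite ring, using $(R^{op})^{op}=R$) that the paper leaves implicit. No further comment is needed.
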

\begin{proof}
It is clear by Proposition \ref{1.g1}.
\end{proof}

\begin{corollary}\label{1.k1}
The following statements hold.
\begin{enumerate}
\item [\rm (1)]
 $ K_{d-1}\otimes_{R}N \in\mathcal{F}_{K_{d-1}}^{(n,d)}(S)$ if and only if $N\in\mathcal{F}^{(n,d)}(R)$;
 \item [\rm (2)]
 ${\rm Hom}_{S}(K_{d-1},M)\in\mathcal{I}_{K_{d-1}}^{(n,d)}(R)$ if and only if $M\in\mathcal{I}^{(n,d)}(S)$.
 \end{enumerate}
 \end{corollary}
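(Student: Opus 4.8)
The plan is to prove each biconditional by separating it into a formal implication, which is immediate from the definition, and a Foxby-equivalence argument for the converse. Throughout I would use the two Foxby functors $K_{d-1}\otimes_R-$ and ${\rm Hom}_S(K_{d-1},-)$ together with the natural transformations $\mu$ and $\nu$ introduced in Definition \ref{1.lk-4}, and the membership statements recorded in Theorem \ref{1.b1} and Proposition \ref{1.e1}.

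For (1), the implication $(\Leftarrow)$ is immediate from Definition \ref{1.a1}: if $N\in\mathcal{F}^{(n,d)}(R)$, then $K_{d-1}\otimes_R N$ is by construction an element of $\mathcal{F}_{K_{d-1}}^{(n,d)}(S)$. For $(\Rightarrow)$ I would start from $K_{d-1}\otimes_R N\in\mathcal{F}_{K_{d-1}}^{(n,d)}(S)$ and apply Proposition \ref{1.e1}(2), which yields simultaneously $K_{d-1}\otimes_R N\in\mathcal{B}_{K_{d-1}}(S)$ and ${\rm Hom}_S(K_{d-1},K_{d-1}\otimes_R N)\in\mathcal{F}^{(n,d)}(R)$. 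It then remains to identify $N$ with its Foxby conjugate, that is, to show that the unit $\mu_N\colon N\to {\rm Hom}_S(K_{d-1},K_{d-1}\otimes_R N)$ is an isomorphism; once this is known, $N\cong {\rm Hom}_S(K_{d-1},K_{d-1}\otimes_R N)\in\mathcal{F}^{(n,d)}(R)$, as required.

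Part (2) is handled dually. Again $(\Leftarrow)$ is immediate from Definition \ref{1.a1}. For $(\Rightarrow)$, starting from ${\rm Hom}_S(K_{d-1},M)\in\mathcal{I}_{K_{d-1}}^{(n,d)}(R)$ I would invoke Proposition \ref{1.e1}(1) to obtain ${\rm Hom}_S(K_{d-1},M)\in\mathcal{A}_{K_{d-1}}(R)$ and $K_{d-1}\otimes_R{\rm Hom}_S(K_{d-1},M)\in\mathcal{I}^{(n,d)}(S)$; the task is then to show that the counit $\nu_M\colon K_{d-1}\otimes_R{\rm Hom}_S(K_{d-1},M)\to M$ is an isomorphism, which gives $M\cong K_{d-1}\otimes_R{\rm Hom}_S(K_{d-1},M)\in\mathcal{I}^{(n,d)}(S)$. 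Alternatively, each part can be derived from the other by character-module duality: applying $(-)^{*}$ and combining Proposition \ref{1.g1} and Corollary \ref{1.mu1} with the adjunction isomorphisms $(K_{d-1}\otimes_R M)^{*}\cong{\rm Hom}_{R^{op}}(K_{d-1},M^{*})$ and ${\rm Hom}_S(K_{d-1},I)^{*}\cong I^{*}\otimes_S K_{d-1}$ of \cite[Theorem 2.76 and Lemma 3.55]{Rot2} converts the tensor statement into the Hom statement over the opposite rings.

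The main obstacle is precisely the isomorphism of the Foxby unit and counit on the \emph{base} module, namely the assertions $N\in\mathcal{A}_{K_{d-1}}(R)$ in (1) and $M\in\mathcal{B}_{K_{d-1}}(S)$ in (2). Proposition \ref{1.e1} only places the \emph{conjugate} modules ($K_{d-1}\otimes_R N$, respectively ${\rm Hom}_S(K_{d-1},M)$) in the Bass, respectively Auslander, class, and in general Foxby equivalence does not reflect membership in the reverse direction. This is where the faithfulness of $K_{d-1}$ enters: for a faithfully semidualizing bimodule one has the reflections $K_{d-1}\otimes_R N\in\mathcal{B}_{K_{d-1}}(S)\Rightarrow N\in\mathcal{A}_{K_{d-1}}(R)$ and ${\rm Hom}_S(K_{d-1},M)\in\mathcal{A}_{K_{d-1}}(R)\Rightarrow M\in\mathcal{B}_{K_{d-1}}(S)$ by \cite[Theorem 6.4]{HW}, which supply exactly the two memberships needed and thereby force $\mu_N$ and $\nu_M$ to be isomorphisms, closing both arguments.
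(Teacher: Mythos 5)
Your part (1) follows the paper's proof essentially verbatim: the backward implication is definitional, and for the forward implication you apply Proposition \ref{1.e1}(2) and then a reflection result placing $N$ itself in $\mathcal{A}_{K_{d-1}}(R)$, so that $N\cong{\rm Hom}_{S}(K_{d-1},K_{d-1}\otimes_{R}N)\in\mathcal{F}^{(n,d)}(R)$. The differences lie in part (2) and in the source of the reflection. For (2) the paper does not run the dual argument you propose as your main route (Proposition \ref{1.e1}(1) together with the reflection ${\rm Hom}_{S}(K_{d-1},M)\in\mathcal{A}_{K_{d-1}}(R)\Rightarrow M\in\mathcal{B}_{K_{d-1}}(S)$); instead it reduces (2) to (1) by character modules --- exactly the alternative you sketch at the end: Proposition \ref{1.g1}(1) gives ${\rm Hom}_{S}(K_{d-1},M)^{*}\cong K_{d-1}\otimes_{S^{op}}M^{*}\in\mathcal{F}_{K_{d-1}}^{(n,d)}(R^{op})$, part (1) then yields $M^{*}\in\mathcal{F}^{(n,d)}(S^{op})$, and \cite[Proposition 3.1]{TX} returns $M\in\mathcal{I}^{(n,d)}(S)$. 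Both routes work; the duality route has the small advantage of needing only the single reflection used in (1). Concerning that reflection: you cite \cite[Theorem 6.4]{HW}, which assumes $K_{d-1}$ is \emph{faithfully} semidualizing, whereas the corollary is stated (per the standing convention of Remark \ref{1.pi3}(6)) for a special semidualizing bimodule with no faithfulness hypothesis; the paper instead invokes \cite[Lemma 2.9]{Z.TT}, formulated for a semidualizing bimodule. This is a difference of citation rather than a gap in your argument, but be aware that you are importing an assumption the statement itself does not make, so you should either verify that the reflection holds without faithfulness or note the extra hypothesis.
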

\begin{proof}
(1) $(\Longrightarrow)$ If $ K_{d-1}\otimes_{R}N \in\mathcal{F}_{K_{d-1}}^{(n,d)}(S)$, then by Proposition \ref{1.e1}(2), $ K_{d-1}\otimes_{R}N \in\mathcal{B}_{K_{d-1}}(S)$. Hence by replacing $K_{d-1}$ instead $C$ from \cite[Lemma 2.9]{Z.TT}, $N\in\mathcal{A}_{K_{d-1}}(R)$. Also by Proposition \ref{1.e1}(2), we observe that
${\rm Hom}_{S}(K_{d-1},K_{d-1}\otimes_{R}N)\in\mathcal{F}^{(n,d)}(R)$. On the other hand,   $N\cong{\rm Hom}_{S}(K_{d-1},K_{d-1}\otimes_{R}N)$, since $N\in\mathcal{A}_{K_{d-1}}(R)$. Consequently $N\in\mathcal{F}^{(n,d)}(R)$.

$(\Longleftarrow)$ is obvious.

(2) If ${\rm Hom}_{S}(K_{d-1},M)\in\mathcal{I}_{K_{d-1}}^{(n,d)}(R)$, then by Proposition \ref{1.g1}(1), ${\rm Hom}_{S}(K_{d-1},M)^{*}\in\mathcal{F}_{K_{d-1}}^{(n,d)}(R^{op})$. By \cite[Lemma 3.55 and Proposition 2.56]{Rot2}, ${\rm Hom}_{S}(K_{d-1},M)^{*}\cong M^{*}\otimes_{S}K_{d-1}\cong K_{d-1}\otimes_{S^{op}}M^{*}$. So by (1), $M^{*}\in\mathcal{F}^{(n,d)}(S^{op})$, and then by \cite[Proposition 3.1]{TX}, 
$M\in\mathcal{I}^{(n,d)}(S)$.
\end{proof}
In the next proposition, we show that classes $\mathcal{I}_{K_{d-1}}^{(n,d)}(R)$ (resp.  $\mathcal{F}_{K_{d-1}}^{(n,d)}(S)$) is closed under extensions.
 \begin{proposition}\label{1.f1}
 The following assertions hold:
\begin{enumerate}
\item [\rm (1)]
If  $0\rightarrow M\rightarrow N\rightarrow L\rightarrow 0$ is a short exact sequence of $R$-modules and 
 $M\in\mathcal{I}_{K_{d-1}}^{(n,d)}(R)$, then $N\in\mathcal{I}_{K_{d-1}}^{(n,d)}(R)$ if   $L\in\mathcal{I}_{K_{d-1}}^{(n,d)}(R)$;
 \item [\rm (2)]
 If  $0\rightarrow M\rightarrow N\rightarrow L\rightarrow 0$ is a short exact sequence of $S$-modules and 
 $M\in\mathcal{F}_{K_{d-1}}^{(n,d)}(S)$, then $N\in\mathcal{F}_{K_{d-1}}^{(n,d)}(S)$ if   $L\in\mathcal{F}_{K_{d-1}}^{(n,d)}(S)$.
 \end{enumerate}
 \end{proposition}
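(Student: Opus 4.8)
The plan is to reduce everything to the characterizations of the two classes supplied by Proposition \ref{1.e1}, so that closure under extensions becomes a matter of transporting a short exact sequence between $R$-modules and $S$-modules along the adjoint pair $\bigl(K_{d-1}\otimes_R -,\ \operatorname{Hom}_S(K_{d-1},-)\bigr)$. For part (1), by Proposition \ref{1.e1}(1) it suffices to verify the two conditions $N\in\mathcal{A}_{K_{d-1}}(R)$ and $K_{d-1}\otimes_R N\in\mathcal{I}^{(n,d)}(S)$. Since $M,L\in\mathcal{I}_{K_{d-1}}^{(n,d)}(R)$, that same proposition gives $M,L\in\mathcal{A}_{K_{d-1}}(R)$ together with $K_{d-1}\otimes_R M,\ K_{d-1}\otimes_R L\in\mathcal{I}^{(n,d)}(S)$, which will be the raw material for both conditions.

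First I would show $N\in\mathcal{A}_{K_{d-1}}(R)$; that is, that the Auslander class is closed under extensions. The Tor- and Ext-vanishing conditions $(A_1),(A_2)$ follow at once from the long exact sequences in $\operatorname{Tor}^R_\bullet(K_{d-1},-)$ and $\operatorname{Ext}_S^\bullet(K_{d-1},K_{d-1}\otimes_R -)$ applied to the given sequence, where one uses $\operatorname{Tor}_1^R(K_{d-1},L)=0$ to see that $0\to K_{d-1}\otimes_R M\to K_{d-1}\otimes_R N\to K_{d-1}\otimes_R L\to 0$ is again short exact. For condition $(A_3)$ I would form the commutative ladder whose vertical maps are the evaluation homomorphisms $\mu_M,\mu_N,\mu_L$; the bottom row $0\to\operatorname{Hom}_S(K_{d-1},K_{d-1}\otimes_R M)\to\cdots$ stays exact because $\operatorname{Ext}_S^1(K_{d-1},K_{d-1}\otimes_R M)=0$, and then the five lemma forces $\mu_N$ to be an isomorphism. (This is exactly the closure of the Auslander class under extensions from \cite{HW}.)

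Once $N\in\mathcal{A}_{K_{d-1}}(R)$ is in hand, the second condition is nearly free: the short exact sequence $0\to K_{d-1}\otimes_R M\to K_{d-1}\otimes_R N\to K_{d-1}\otimes_R L\to 0$ obtained above has outer terms in $\mathcal{I}^{(n,d)}(S)$, and $\mathcal{I}^{(n,d)}(S)$ is itself closed under extensions, immediately from the long exact sequence $\operatorname{Ext}_S^{d+1}(U,K_{d-1}\otimes_R M)\to\operatorname{Ext}_S^{d+1}(U,K_{d-1}\otimes_R N)\to\operatorname{Ext}_S^{d+1}(U,K_{d-1}\otimes_R L)$ for every finitely $n$-presented $S$-module $U$. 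Hence $K_{d-1}\otimes_R N\in\mathcal{I}^{(n,d)}(S)$, and Proposition \ref{1.e1}(1) yields $N\in\mathcal{I}_{K_{d-1}}^{(n,d)}(R)$.

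Part (2) is dual: using Proposition \ref{1.e1}(2) I would reduce to $N\in\mathcal{B}_{K_{d-1}}(S)$ and $\operatorname{Hom}_S(K_{d-1},N)\in\mathcal{F}^{(n,d)}(R)$, establish closure of the Bass class under extensions by the analogous long-exact-sequence-plus-five-lemma argument, then apply the left exact functor $\operatorname{Hom}_S(K_{d-1},-)$—right exactness being preserved since $\operatorname{Ext}_S^1(K_{d-1},M)=0$ for $M\in\mathcal{B}_{K_{d-1}}(S)$—and finally invoke closure of $\mathcal{F}^{(n,d)}(R)$ under extensions via the long exact sequence in $\operatorname{Tor}^R_{d+1}(U,-)$. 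The only genuinely delicate point is the evaluation-isomorphism conditions $(A_3)/(B_3)$ for the middle term $N$; everything else is a formal diagram chase, and that step is dispatched cleanly by the five lemma once the relevant $\operatorname{Ext}^1$ groups are seen to vanish.
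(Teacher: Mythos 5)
Your argument for part (1) is correct and is essentially the paper's own proof: both reduce via Proposition \ref{1.e1}(1) to showing $N\in\mathcal{A}_{K_{d-1}}(R)$ (the paper simply cites \cite[Theorem 6.2]{HW} for closure of the Auslander class under extensions, where you re-derive it) and then check $K_{d-1}\otimes_R N\in\mathcal{I}^{(n,d)}(S)$ by exactly the same short exact sequence of tensor products and ${\rm Ext}^{d+1}_S(U,-)$ long exact sequence. For part (2) you diverge slightly: the paper disposes of it in one line by character-module duality, applying Proposition \ref{1.g1}(2) to the dualized sequence $0\to L^{*}\to N^{*}\to M^{*}\to 0$ and invoking part (1) over $S^{op}$, whereas you run the mirror-image argument directly through Proposition \ref{1.e1}(2), closure of the Bass class under extensions, and the exactness of ${\rm Hom}_S(K_{d-1},-)$ granted by ${\rm Ext}^1_S(K_{d-1},M)=0$. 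Both routes are sound; the paper's is shorter once Proposition \ref{1.g1} is available, while yours is self-contained and avoids passing to opposite rings, at the cost of having to verify the Bass-class analogue of the five-lemma step.
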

\begin{proof}
(1)  
 Let $M, L\in\mathcal{I}_{K_{d-1}}^{(n,d)}(R)$. Then by Proposition \ref{1.e1}(1), $M,L\in\mathcal{A}_{K_{d-1}}(R)$ and also $K_{d-1}\otimes_{R}M$ and $K_{d-1}\otimes_{R}L$ are in $\mathcal{I}^{(n,d)}(S)$. Hence  by \cite[Theorem 6.2]{HW}, it follows that $N\in\mathcal{A}_{K_{d-1}}(R)$. On the other hand, since $L\in\mathcal{A}_{K_{d-1}}(R)$, ${\rm Tor}^{R}_{i}(K_{d-1},L)=0$ for any $i\geq 1$. So, there exists the following exact sequence of $S$-modules:
$$0\longrightarrow K_{d-1}\otimes_{R}M\longrightarrow K_{d-1}\otimes_{R}N\longrightarrow K_{d-1}\otimes_{R}L\longrightarrow 0.$$
If $U$ is a  finitely $n$-presented $S$-module, then we have the following exact sequence:
$$0= {\rm Ext}_{S}^{d+1}(U,K_{d-1}\otimes_{R}M)\longrightarrow {\rm Ext}_{S}^{d+1}(U,K_{d-1}\otimes_{R}N)\longrightarrow {\rm Ext}_{S}^{d+1}(U,K_{d-1}\otimes_{R}L)=0.$$
Consequently ${\rm Ext}_{S}^{d+1}(U,K_{d-1}\otimes_{R}N)=0$, and so $ K_{d-1}\otimes_{R}N \in\mathcal{I}^{(n,d)}(S)$ and hence by Proposition \ref{1.e1}(1), we get that $N\in\mathcal{I}_{K_{d-1}}^{(n,d)}(R)$.

(2) By Proposition \ref{1.g1}, it is clear.
\end{proof}

The class $\mathcal{I}_{K_{d-1}}^{(n,d)}(R)$ (resp.  $\mathcal{F}_{K_{d-1}}^{(n,d)}(S)$) is
closed under direct summands, direct
products and direct sums, see the  propositions \ref{1.h1}, \ref{1.h15} and \ref{1.h16}.
\begin{proposition}\label{1.h1}
 The classes $\mathcal{I}_{K_{d-1}}^{(n,d)}(R)$ and $\mathcal{F}_{K_{d-1}}^{(n,d)}(S)$ are
closed under direct summands.
 \end{proposition}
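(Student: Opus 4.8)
The plan is to reduce everything to the characterizations already established in Proposition~\ref{1.e1}, so that closure under direct summands for $\mathcal{I}_{K_{d-1}}^{(n,d)}(R)$ and $\mathcal{F}_{K_{d-1}}^{(n,d)}(S)$ becomes a formal consequence of the corresponding closure properties of the Auslander class $\mathcal{A}_{K_{d-1}}(R)$, the Bass class $\mathcal{B}_{K_{d-1}}(S)$, and the ``raw'' classes $\mathcal{I}^{(n,d)}(S)$ and $\mathcal{F}^{(n,d)}(R)$.

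For part (1), I would start with a decomposition $M = M_1 \oplus M_2$ where $M \in \mathcal{I}_{K_{d-1}}^{(n,d)}(R)$, and invoke Proposition~\ref{1.e1}(1) to get $M \in \mathcal{A}_{K_{d-1}}(R)$ together with $K_{d-1}\otimes_{R} M \in \mathcal{I}^{(n,d)}(S)$. The first goal is $M_1 \in \mathcal{A}_{K_{d-1}}(R)$: since each defining condition $(A_1)$--$(A_3)$ is phrased through the additive functors $\operatorname{Tor}_{i}^{R}(K_{d-1},-)$ and $\operatorname{Ext}_{S}^{i}(K_{d-1}, K_{d-1}\otimes_{R}-)$ and the natural evaluation $\mu_{(-)}$, each of these splits along $M = M_1 \oplus M_2$, so the vanishing (resp.\ the isomorphism) that holds for $M$ is inherited by each summand; thus $M_1 \in \mathcal{A}_{K_{d-1}}(R)$ (this is also recorded in \cite{HW}). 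The second goal is $K_{d-1}\otimes_{R} M_1 \in \mathcal{I}^{(n,d)}(S)$: because $K_{d-1}\otimes_{R} M \cong (K_{d-1}\otimes_{R} M_1)\oplus(K_{d-1}\otimes_{R} M_2)$ and $\mathcal{I}^{(n,d)}(S)$ is defined by the vanishing of $\operatorname{Ext}_{S}^{d+1}(U,-)$ on finitely $n$-presented $U$, the direct summand $K_{d-1}\otimes_{R} M_1$ inherits this vanishing. Feeding both facts back into Proposition~\ref{1.e1}(1) yields $M_1 \in \mathcal{I}_{K_{d-1}}^{(n,d)}(R)$.

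For part (2), I would argue symmetrically: given $N = N_1 \oplus N_2 \in \mathcal{F}_{K_{d-1}}^{(n,d)}(S)$, Proposition~\ref{1.e1}(2) gives $N \in \mathcal{B}_{K_{d-1}}(S)$ and $\operatorname{Hom}_{S}(K_{d-1}, N) \in \mathcal{F}^{(n,d)}(R)$; the Bass class is closed under direct summands by the same additivity of conditions $(B_1)$--$(B_3)$, while $\operatorname{Hom}_{S}(K_{d-1},N) \cong \operatorname{Hom}_{S}(K_{d-1},N_1)\oplus \operatorname{Hom}_{S}(K_{d-1},N_2)$ and $\mathcal{F}^{(n,d)}(R)$ is closed under direct summands (vanishing of $\operatorname{Tor}_{d+1}^{R}(U,-)$). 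Proposition~\ref{1.e1}(2) then returns $N_1 \in \mathcal{F}_{K_{d-1}}^{(n,d)}(S)$. Alternatively, (2) can be deduced from (1) by dualizing: apply Proposition~\ref{1.g1}(2) to pass to the character module, observe that $N^{*} \cong N_1^{*} \oplus N_2^{*}$ lies in $\mathcal{I}_{K_{d-1}}^{(n,d)}(S^{op})$, apply part (1) over $S^{op}$ to the summand $N_1^{*}$, and return via Proposition~\ref{1.g1}(2).

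The only step requiring a moment's care, and hence the main obstacle, is confirming that the Auslander and Bass classes themselves are closed under direct summands; but this is immediate once one notes that conditions $(A_1)$--$(A_3)$ and $(B_1)$--$(B_3)$ are all built from additive functors and natural transformations that respect the splitting idempotent of a direct summand, so an isomorphism or a vanishing statement for $M$ (resp.\ $N$) forces the same on each summand. Everything else is purely formal, resting on the additivity of $\otimes$, $\operatorname{Hom}$, $\operatorname{Ext}$ and $\operatorname{Tor}$ together with the equivalences in Proposition~\ref{1.e1}.
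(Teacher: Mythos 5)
Your proof is correct and follows essentially the same route as the paper: both reduce to the characterization in Proposition~\ref{1.e1}, split $K_{d-1}\otimes_{R}-$ (resp.\ $\operatorname{Hom}_{S}(K_{d-1},-)$) along the direct sum decomposition, and use closure of $\mathcal{I}^{(n,d)}(S)$ and $\mathcal{F}^{(n,d)}(R)$ under direct summands. If anything, you are slightly more careful than the paper, which silently omits verifying that the summand still lies in $\mathcal{A}_{K_{d-1}}(R)$ (resp.\ $\mathcal{B}_{K_{d-1}}(S)$) before applying the converse direction of Proposition~\ref{1.e1}; your additivity argument for conditions $(A_1)$--$(A_3)$ and $(B_1)$--$(B_3)$ fills exactly that gap.
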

\begin{proof}
Suppose that $M\in\mathcal{I}_{K_{d-1}}^{(n,d)}(R)$ and $R$-module $N$ is a summand of $M$. Then, there is a submodule $L$ of $M$ such that $M=L\oplus N$.
Hence there is a split exact sequence $0\rightarrow L\rightarrow M\rightarrow N\rightarrow 0$. So, the split exact sequence $$0\rightarrow K_{d-1}\otimes_{R}L\rightarrow K_{d-1}\otimes_{R}M\rightarrow K_{d-1}\otimes_{R}N\rightarrow 0$$ of $S$-modules exists. Hence we have $K_{d-1}\otimes_{R}M=(K_{d-1}\otimes_{R}L)\oplus (K_{d-1}\otimes_{R}N)$. By Proposition \ref{1.e1}(1), $ K_{d-1}\otimes_{R}M \in\mathcal{I}^{(n,d)}(S)$, since $M\in\mathcal{I}_{K_{d-1}}^{(n,d)}(R)$. Hence by \cite[Proposition 2.10]{ZX}, $ K_{d-1}\otimes_{R}L $ and $ K_{d-1}\otimes_{R}N $ are in $\mathcal{I}^{(n,d)}(S)$. Consequently by Proposition \ref{1.e1}(1), we deduce that 
$L,N\in\mathcal{I}_{K_{d-1}}^{(n,d)}(R)$. Similarly, it follows that  the class $\mathcal{F}_{K_{d-1}}^{(n,d)}(S)$ is
closed under direct summands.
\end{proof}
\begin{proposition}\label{1.h15}
The following statements are equivalent:
\begin{enumerate}
\item [\rm (1)]
$M_j\in\mathcal{I}_{K_{d-1}}^{(n,d)}(R)$ for any $j\in J$;
 \item [\rm (2)]
$\prod_{j\in J}M_i\in\mathcal{I}_{K_{d-1}}^{(n,d)}(R)$;
 \item [\rm (3)]
$\bigoplus_{j\in J}M_J\in\mathcal{I}_{K_{d-1}}^{(n,d)}(R)$.
 \end{enumerate}
 \end{proposition}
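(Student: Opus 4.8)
The plan is to reduce everything to the two defining conditions furnished by Proposition \ref{1.e1}(1), namely that $M\in\mathcal{I}_{K_{d-1}}^{(n,d)}(R)$ precisely when $M\in\mathcal{A}_{K_{d-1}}(R)$ and $K_{d-1}\otimes_{R}M\in\mathcal{I}^{(n,d)}(S)$. I would then establish the forward implications $(1)\Rightarrow(2)$ and $(1)\Rightarrow(3)$ directly, and dispatch both converses $(2)\Rightarrow(1)$ and $(3)\Rightarrow(1)$ at once by noting that each $M_j$ is a direct summand of $\prod_{j\in J}M_j$ and of $\bigoplus_{j\in J}M_j$, and invoking Proposition \ref{1.h1}, which says that $\mathcal{I}_{K_{d-1}}^{(n,d)}(R)$ is closed under direct summands.

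For the forward implications I would assemble three closure facts. First, since $K_{d-1}$ is special semidualizing it admits a degreewise finite projective resolution on both sides (Definition \ref{1.lk-3}(1)); in particular $K_{d-1}$ is finitely presented, so $K_{d-1}\otimes_{R}-$ commutes with arbitrary direct sums and, crucially, with arbitrary direct products, giving $K_{d-1}\otimes_{R}\prod_{j\in J}M_j\cong\prod_{j\in J}(K_{d-1}\otimes_{R}M_j)$ and $K_{d-1}\otimes_{R}\bigoplus_{j\in J}M_j\cong\bigoplus_{j\in J}(K_{d-1}\otimes_{R}M_j)$. Second, the Auslander class $\mathcal{A}_{K_{d-1}}(R)$ is closed under direct products and direct sums; this again rests on the degreewise finiteness of $K_{d-1}$, which makes ${\rm Tor}_{i}^{R}(K_{d-1},-)$, ${\rm Ext}_{S}^{i}(K_{d-1},K_{d-1}\otimes_{R}-)$ and the evaluation map $\mu$ all compatible with products and sums. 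Third, $\mathcal{I}^{(n,d)}(S)$ is closed under products, because ${\rm Ext}$ commutes with products in the second variable, and under direct sums, because by Remark \ref{1.lk2} a finitely $n$-presented $U$ with $n\geq d+1$ gives ${\rm Ext}_{S}^{d+1}(U,-)\cong{\rm Ext}_{S}^{1}(K,-)$ with $K$ finitely presented, and ${\rm Ext}_{S}^{1}(K,-)$ commutes with direct sums for finitely presented $K$.

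With these in hand, $(1)\Rightarrow(2)$ runs as follows. Assuming each $M_j\in\mathcal{I}_{K_{d-1}}^{(n,d)}(R)$, Proposition \ref{1.e1}(1) gives $M_j\in\mathcal{A}_{K_{d-1}}(R)$ and $K_{d-1}\otimes_{R}M_j\in\mathcal{I}^{(n,d)}(S)$. Closure of $\mathcal{A}_{K_{d-1}}(R)$ under products yields $\prod_{j\in J}M_j\in\mathcal{A}_{K_{d-1}}(R)$, while the finite-presentation isomorphism together with closure of $\mathcal{I}^{(n,d)}(S)$ under products yields $K_{d-1}\otimes_{R}\prod_{j\in J}M_j\cong\prod_{j\in J}(K_{d-1}\otimes_{R}M_j)\in\mathcal{I}^{(n,d)}(S)$; so Proposition \ref{1.e1}(1) returns $\prod_{j\in J}M_j\in\mathcal{I}_{K_{d-1}}^{(n,d)}(R)$. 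The implication $(1)\Rightarrow(3)$ is verbatim the same with products replaced by direct sums.

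The main obstacle I anticipate is the commutation $K_{d-1}\otimes_{R}\prod_{j}M_j\cong\prod_{j}(K_{d-1}\otimes_{R}M_j)$, since tensoring does not commute with products for a general module; this is exactly where the (super) finite presentation of $K_{d-1}$ must be invoked, and the same finiteness is what powers the product-closure of $\mathcal{A}_{K_{d-1}}(R)$ and of $\mathcal{I}^{(n,d)}(S)$. The direct-sum side is comparatively routine, as tensor products and the relevant ${\rm Ext}$ and ${\rm Tor}$ already behave well with respect to sums.
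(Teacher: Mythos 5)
Your proof is correct, and the forward implications $(1)\Rightarrow(2)$ and $(1)\Rightarrow(3)$ follow exactly the paper's route: reduce via Proposition \ref{1.e1}(1) to membership in $\mathcal{A}_{K_{d-1}}(R)$ plus $(n,d)$-injectivity of $K_{d-1}\otimes_{R}(-)$, use the finite presentation of $K_{d-1}$ to commute the tensor past products (the paper cites \cite[Lemma 2.10]{NDING}) and sums (\cite[Theorem 2.65]{Rot2}), and then invoke closure of $\mathcal{A}_{K_{d-1}}(R)$ under products and sums (\cite[Proposition 4.2]{HW}) and of $\mathcal{I}^{(n,d)}(S)$ under products (\cite[Proposition 2.2(2)]{TX}) and sums (\cite[Proposition 2.10]{ZX}). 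Where you genuinely diverge is in the converses: you observe that each $M_j$ is a direct summand of both $\prod_{j\in J}M_j$ and $\bigoplus_{j\in J}M_j$ and appeal to Proposition \ref{1.h1} (closure under direct summands), which is proved before this statement, so there is no circularity. The paper instead reruns the characterization: it extracts $K_{d-1}\otimes_{R}M_j\in\mathcal{I}^{(n,d)}(S)$ from the product via \cite[Proposition 2.10]{ZX}, passes to the Bass class by Theorem \ref{1.b1}(1), and then uses \cite[Lemma 3.9(2)]{WGZ} to recover $M_j\in\mathcal{A}_{K_{d-1}}(R)$. Your summand argument is shorter and avoids the detour through $\mathcal{B}_{K_{d-1}}(S)$ and the external lemma; the paper's version has the minor advantage of being self-contained at the level of the two defining conditions, which is the pattern it reuses elsewhere. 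Your justification that $\operatorname{Ext}^{1}_{S}(K,-)$ commutes with direct sums for the finitely presented syzygy $K$ of Remark \ref{1.lk2} is sound (finite generation of the first syzygy of $K$ suffices for sums), so no gap there.
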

\begin{proof}
(1)$\Longrightarrow$(2)
By Proposition \ref{1.e1}(1), $M_j\in\mathcal{A}_{K_{d-1}}(R)$ and $ K_{d-1}\otimes_{R}M_j \in\mathcal{I}^{(n,d)}(S)$ for any $j\in J$. By \cite[Proposition 2.2(2)]{TX}, $\prod_{j\in J}(K_{d-1}\otimes_{R}M_j)\in\mathcal{I}^{(n,d)}(S)$.  \cite[Lemma 2.10]{NDING} implies that  $\prod_{j\in J}(K_{d-1}\otimes_{R}M_j)\cong K_{d-1}\otimes_{R}(\prod_{j\in J}M_j)$, since $K_{d-1}$ is finitely presented. So, $ K_{d-1}\otimes_{R}(\prod_{j\in J}M_j)\in\mathcal{I}^{(n,d)}(S)$. Also by replacing $K_{d-1}$ instead $_SC_R$ from \cite[Proposition 4.2]{HW}, $\prod_{j\in J}M_j\in\mathcal{A}_{K_{d-1}}(R)$, and hence by Proposition \ref{1.e1}(1), we obtain that $\prod M_{j\in J}\in\mathcal{I}_{K_{d-1}}^{(n,d)}(R).$

(2)$\Longrightarrow$(1) Let $\prod_{j\in J}M_j\in\mathcal{I}_{K_{d-1}}^{(n,d)}(R)$. Then by Proposition \ref{1.e1}(1), $\prod_{i\in J}M_j\in\mathcal{A}_{K_{d-1}}(R)$ and $\prod_{i\in J}(K_{d-1}\otimes_{R}M_j)\in\mathcal{I}^{(n,d)}(S)$. So by \cite[Proposition 2.10]{ZX}, $ K_{d-1}\otimes_{R}M_j \in\mathcal{I}^{(n,d)}(S)$ and then for any $j\in J$, $ K_{d-1}\otimes_{R}M_j \in\mathcal{B}_{K_{d-1}}(S)$ by Theorem \ref{1.b1}(1).  Hence by replacing $K_{d-1}$ instead $_SC_R$ from \cite[Lemma 3.9(2)]{WGZ}, we deduce that $M_j\in\mathcal{A}_{K_{d-1}}(R)$ and so by Proposition \ref{1.e1}(1), $M_j\in\mathcal{I}_{K_{d-1}}^{(n,d)}(R)$
for any $j\in J$.

(1)$\Longrightarrow$(3) By Proposition \ref{1.e1}(1), $M_j\in\mathcal{A}_{K_{d-1}}(R)$ and $ K_{d-1}\otimes_{R}M_j \in\mathcal{I}^{(n,d)}(S)$ for any $j\in J$. So by \cite[Proposition 2.10]{ZX}, $\bigoplus_{j\in J}(K_{d-1}\otimes_{R}M_j)\in\mathcal{I}^{(n,d)}(S)$. Also by \cite[Theorem 2.65]{Rot2}, $\bigoplus_{j\in J}(K_{d-1}\otimes_{R}M_j)\cong K_{d-1}\otimes_{R}(\bigoplus_{j\in J}M_j)$, and then $ K_{d-1}\otimes_{R}(\bigoplus_{j\in J}M_j)\in\mathcal{I}^{(n,d)}(S)$. By replacing $K_{d-1}$ instead $_SC_R$ from \cite[Proposition 4.2]{HW}, $\bigoplus_{j\in J}M_j\in\mathcal{A}_{K_{d-1}}(R)$, and so by Proposition \ref{1.e1}(1), we get that $\bigoplus M_{j\in J}\in\mathcal{I}_{K_{d-1}}^{(n,d)}(R).$ 

(3)$\Longrightarrow$(1) The proof is similar to that of (2)$\Longrightarrow$(1).
\end{proof}
\begin{proposition}\label{1.h16}
The following statements are equivalent:
\begin{enumerate}
\item [\rm (1)]
$M_j\in\mathcal{F}_{K_{d-1}}^{(n,d)}(S)$ for any $j\in J$;
 \item [\rm (2)]
$\prod_{j\in J}M_i\in\mathcal{F}_{K_{d-1}}^{(n,d)}(S)$;
 \item [\rm (3)]
$\bigoplus_{j\in J}M_J\in\mathcal{F}_{K_{d-1}}^{(n,d)}(S)$.
 \end{enumerate}
 \end{proposition}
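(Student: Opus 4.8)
The plan is to mirror the proof of Proposition~\ref{1.h15}, replacing the injective criterion of Proposition~\ref{1.e1}(1) by the flat criterion of Proposition~\ref{1.e1}(2): an $S$-module $N$ lies in $\mathcal{F}_{K_{d-1}}^{(n,d)}(S)$ exactly when $N\in\mathcal{B}_{K_{d-1}}(S)$ and $\operatorname{Hom}_{S}(K_{d-1},N)\in\mathcal{F}^{(n,d)}(R)$. Accordingly I would translate each of (1), (2), (3) into the conjunction of a membership in the Bass class $\mathcal{B}_{K_{d-1}}(S)$ and an $(n,d)$-flatness of the associated $\operatorname{Hom}$-module, and then check these factorwise. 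The ingredients I would assemble first are: $\operatorname{Hom}_{S}(K_{d-1},-)$ commutes with arbitrary direct products in general and with direct sums because $K_{d-1}$ is finitely generated; $\mathcal{F}^{(n,d)}(R)$ is closed under direct products, direct sums and direct summands (the flat counterparts of \cite[Proposition 2.2(2)]{TX} and \cite[Proposition 2.10]{ZX}); and the Bass class $\mathcal{B}_{K_{d-1}}(S)$ is closed under direct products and direct sums, with factors of such products and summands of such sums again lying in $\mathcal{B}_{K_{d-1}}(S)$ (the Bass-class analogues of \cite[Proposition 4.2]{HW} and \cite[Lemma 3.9(2)]{WGZ}).

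For $(1)\Rightarrow(2)$ I would argue: each $M_j\in\mathcal{F}_{K_{d-1}}^{(n,d)}(S)$ gives, via Proposition~\ref{1.e1}(2), that $M_j\in\mathcal{B}_{K_{d-1}}(S)$ and $\operatorname{Hom}_{S}(K_{d-1},M_j)\in\mathcal{F}^{(n,d)}(R)$; then $\prod_{j}M_j\in\mathcal{B}_{K_{d-1}}(S)$ by product-closure of the Bass class, while $\operatorname{Hom}_{S}(K_{d-1},\prod_{j}M_j)\cong\prod_{j}\operatorname{Hom}_{S}(K_{d-1},M_j)\in\mathcal{F}^{(n,d)}(R)$ since $\operatorname{Hom}$ commutes with products and $\mathcal{F}^{(n,d)}(R)$ is product-closed, whence $\prod_{j}M_j\in\mathcal{F}_{K_{d-1}}^{(n,d)}(S)$ again by Proposition~\ref{1.e1}(2). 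For $(2)\Rightarrow(1)$ I would use that each $M_{j}$ is a direct summand of $\prod_{j}M_j$, so summand-closure of $\mathcal{B}_{K_{d-1}}(S)$ and of $\mathcal{F}^{(n,d)}(R)$ recovers $M_{j}\in\mathcal{B}_{K_{d-1}}(S)$ and $\operatorname{Hom}_{S}(K_{d-1},M_{j})\in\mathcal{F}^{(n,d)}(R)$, hence $M_{j}\in\mathcal{F}_{K_{d-1}}^{(n,d)}(S)$.

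The implications $(1)\Leftrightarrow(3)$ run in exactly the same way with $\prod$ replaced by $\bigoplus$, the only genuine difference being that $\operatorname{Hom}_{S}(K_{d-1},\bigoplus_{j}M_j)\cong\bigoplus_{j}\operatorname{Hom}_{S}(K_{d-1},M_j)$ and the sum-closure of $\mathcal{B}_{K_{d-1}}(S)$ now really use that $K_{d-1}$ admits a degreewise finite projective resolution (so that $\operatorname{Hom}_{S}(K_{d-1},-)$ and $\operatorname{Ext}^{i}_{S}(K_{d-1},-)$ commute with $\bigoplus$); alternatively one gets $(1)\Leftrightarrow(3)$ directly from Proposition~\ref{1.h15} over $S^{op}$ via Proposition~\ref{1.g1}(2) and the identity $(\bigoplus_{j}M_j)^{*}\cong\prod_{j}M_j^{*}$. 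I expect the main obstacle to be the product-closure of $\mathcal{F}^{(n,d)}(R)$: arbitrary products of flat modules need not be flat over a non-coherent ring, and it is precisely the standing hypothesis $n>d+1$ that forces the special finitely presented module governing $(n,d)$-flatness to be finitely $2$-presented, so that $\operatorname{Tor}^{R}_{1}(K,-)$ commutes with direct products and $\mathcal{F}^{(n,d)}(R)$ stays product-closed. All remaining commutations are routine consequences of $K_{d-1}$ being a finitely presented semidualizing bimodule.
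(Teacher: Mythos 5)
Your proposal is correct and matches the paper's intent exactly: the paper's proof of this proposition is literally ``similar to that of Proposition~\ref{1.h15}'', and your argument is precisely that dualization, replacing the criterion of Proposition~\ref{1.e1}(1) by Proposition~\ref{1.e1}(2) and checking membership in $\mathcal{B}_{K_{d-1}}(S)$ and $(n,d)$-flatness of $\operatorname{Hom}_{S}(K_{d-1},-)$ factorwise via the same closure properties. Your observation that product-closure of $\mathcal{F}^{(n,d)}(R)$ rests on the special finitely presented module being finitely $2$-presented (forced by the standing hypothesis $n>d+1$) is a correct and worthwhile detail that the paper leaves implicit.
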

\begin{proof}
The proof is similar to that of Proposition \ref{1.h15}.
\end{proof}
\begin{corollary}\label{1.l-1}
The following statements hold.
\begin{enumerate}
\item [\rm (1)]
  $M\in\mathcal{I}_{K_{d-1}}^{(n,d)}(R)$ if and only if every pure submodule and pure epimorphic image of $M$ is in $\mathcal{I}_{K_{d-1}}^{(n,d)}(R)$;
\item [\rm (2)]
 $M\in\mathcal{F}_{K_{d-1}}^{(n,d)}(S)$ if and only if every pure submodule and pure epimorphic image of $M$ is in $\mathcal{F}_{K_{d-1}}^{(n,d)}(S)$. 
 \end{enumerate}
 \end{corollary}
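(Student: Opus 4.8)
The plan is to reduce both statements to closure under direct summands by passing to character modules, exploiting the duality of Proposition~\ref{1.g1}. The backward implications are immediate in each part: $M$ is at once a pure submodule and a pure epimorphic image of itself, so the hypothesis that every such module lies in the class forces $M$ into the class. Hence the whole content is the forward direction, and since part~(2) is obtained from part~(1) by interchanging roles via Proposition~\ref{1.g1}(2), I would concentrate on part~(1).

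The device I would use is the classical description of purity through character modules: a short exact sequence $0\to A\to M\to C\to 0$ of $R$-modules is pure exact if and only if the dual sequence $0\to C^{*}\to M^{*}\to A^{*}\to 0$ of $R^{op}$-modules is \emph{split} (see \cite{Rot2}). A pure submodule $A\subseteq M$ produces such a pure exact sequence whose quotient $C=M/A$ is the associated pure epimorphic image, and conversely every pure epimorphic image of $M$ arises as such a quotient. Thus it is enough to fix one pure exact sequence $0\to A\to M\to C\to 0$ with $M\in\mathcal{I}_{K_{d-1}}^{(n,d)}(R)$ and to show that both $A$ and $C$ lie in $\mathcal{I}_{K_{d-1}}^{(n,d)}(R)$.

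Assume $M\in\mathcal{I}_{K_{d-1}}^{(n,d)}(R)$. By Proposition~\ref{1.g1}(1), $M^{*}\in\mathcal{F}_{K_{d-1}}^{(n,d)}(R^{op})$. Purity makes the character dual split, so $M^{*}\cong A^{*}\oplus C^{*}$ as $R^{op}$-modules. Because the flat class is closed under direct summands (Proposition~\ref{1.h1}, whose formal proof applies verbatim over $R^{op}$), both $A^{*}$ and $C^{*}$ belong to $\mathcal{F}_{K_{d-1}}^{(n,d)}(R^{op})$. Applying the reverse implication of Proposition~\ref{1.g1}(1) to each summand then gives $A\in\mathcal{I}_{K_{d-1}}^{(n,d)}(R)$ and $C\in\mathcal{I}_{K_{d-1}}^{(n,d)}(R)$, as desired. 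For part~(2) the same argument runs with Proposition~\ref{1.g1}(2) in place of Proposition~\ref{1.g1}(1) and with closure under direct summands of the class $\mathcal{I}_{K_{d-1}}^{(n,d)}(S^{op})$.

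The step I expect to require the most care is the purity/splitting dictionary together with the justification that Proposition~\ref{1.h1} transfers to the opposite rings; I would state the splitting criterion explicitly so that the reduction to direct summands is unambiguous, and note that the proof of Proposition~\ref{1.h1} uses only that split sequences remain split under the relevant tensor functor together with the equivalences of Proposition~\ref{1.e1}, both of which are insensitive to replacing the ground rings by their opposites.
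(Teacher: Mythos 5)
Your proposal is correct and follows essentially the same route as the paper: dualize the pure exact sequence to a split sequence of character modules, apply Proposition~\ref{1.g1} to move into $\mathcal{F}_{K_{d-1}}^{(n,d)}(R^{op})$, use closure under direct summands (the paper invokes Proposition~\ref{1.h16}/\ref{1.h15}, you invoke Proposition~\ref{1.h1}, which amounts to the same thing here), and dualize back. The only cosmetic difference is that you dispatch the trivial backward implication separately, whereas the paper packages both directions into one chain of equivalences.
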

\begin{proof}
(1) Suppose that  $M\in\mathcal{I}_{K_{d-1}}^{(n,d)}(R)$ and $N$ is a pure submodule of $M$. Then there exists a pure exact sequence $0 \rightarrow N \rightarrow M \rightarrow M / N \rightarrow 0$ which gives rise to a split exact sequence $0 \rightarrow(M / N)^* \rightarrow M^* \rightarrow N^* \rightarrow 0$ of $R^{op}$-modules.  By Proposition \ref{1.g1}(1), $M^*$ is in $\mathcal{F}_{K_{d-1}}^{(n,d)}(R^{op})$. Then by Proposition \ref{1.h16}, $M^*$ is in $\mathcal{F}_{K_{d-1}}^{(n,d)}(R^{op})$ if and only if $N^*$ and $(M / N)^*$ are in $\mathcal{F}_{K_{d-1}}^{(n,d)}(R^{op})$. Hence by Propositions \ref{1.g1}(1) and \ref{1.h15}, we deduce that $M$ is in $\mathcal{I}_{K_{d-1}}^{(n,d)}(R)$ if and only if $N$ and $M / N$ are in $\mathcal{I}_{K_{d-1}}^{(n,d)}(R)$. 

(2) It is similar to the proof of (1).
\end{proof}
\ \ Let $\mathcal{X}$ be a class of $R$-modules and $M$ be an 
 $R$-module. Following \cite{EM}, we say that a  morphism $f : F\rightarrow M$ is a
$\mathcal{X}$-precover of $M$ if $F\in\mathcal{X}$ and ${\rm Hom}_{R}(F^{'}, F) \rightarrow {\rm Hom}_{R}(F^{'},M)\rightarrow 0$ is exact
for all $F^{'}\in\mathcal{X}$. Moreover, if whenever a  morphism $g : F\rightarrow F$ such that
$fg = f$ is an automorphism of $F$, then $f : F\rightarrow M$ is called a  $\mathcal{X}$-cover of $M$. The
class $\mathcal{X}$ is called (pre)covering if each $R$-module has a $\mathcal{X}$-(pre)cover. Dually,
the notions of $\mathcal{X}$-preenvelopes, $\mathcal{X}$-envelopes and (pre)enveloping classes are defined.

A duality pair over $R$ \cite{HJ} is a pair $(\mathcal{M}, \mathcal{N})$, where $\mathcal{M}$ is a class of
$R$-modules and $\mathcal{N}$ is a class of $R^{op}$-
modules, subject to the following conditions:
(1) For an $R$-module $M$, one has $M\in \mathcal{M}$ if and only if $M^{*}\in \mathcal{N}$.
(2) $\mathcal{N}$ is closed under direct summands and finite direct sums.

In the following theorem ,  we show that   the classes $\mathcal{I}_{K_{d-1}}^{(n,d)}(R)$ and $\mathcal{F}_{K_{d-1}}^{(n,d)}(S)$ are preenveloping and covering.
\begin{theorem}\label{1.z1}
The following statements hold.
\begin{enumerate}
\item [\rm (1)]
  The pair $(\mathcal{I}_{ K_{d-1}}^{(n,d)}(R), \mathcal{F}_{ K_{d-1}}^{(n,d)}(R^{op}))$ is a duality pair, and the class $\mathcal{I}_{ K_{d-1}}^{(n,d)}(R)$  is  covering and preenveloping;
\item [\rm (2)]
 The pair $(\mathcal{F}_{ K_{d-1}}^{(n,d)}(S), \mathcal{I}_{ K_{d-1}}^{(n,d)}(S^{op}))$ is a duality pair, and the class $\mathcal{F}_{ K_{d-1}}^{(n,d)}(S)$  is  covering and preenveloping.
 \end{enumerate}
\end{theorem}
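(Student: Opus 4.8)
The plan is to recognize that essentially all the work has already been carried out in the preceding propositions, so that the statement reduces to (i) verifying the two axioms of a duality pair and (ii) quoting the general machinery of Holm and Jørgensen \cite{HJ}. I would prove part (1) in full and then observe that part (2) is entirely symmetric, obtained by interchanging the roles of $R$ and $S$ and of the two classes.

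For part (1), set $\mathcal{M}=\mathcal{I}_{K_{d-1}}^{(n,d)}(R)$ and $\mathcal{N}=\mathcal{F}_{K_{d-1}}^{(n,d)}(R^{op})$. First I would check axiom (1) of a duality pair: for an $R$-module $M$ one has $M\in\mathcal{M}$ if and only if $M^{*}\in\mathcal{N}$. This is precisely Proposition \ref{1.g1}(1), so nothing further is needed. Next I would check axiom (2), namely that $\mathcal{N}=\mathcal{F}_{K_{d-1}}^{(n,d)}(R^{op})$ is closed under direct summands and finite direct sums. Closure under direct summands is Proposition \ref{1.h1} (applied with $R^{op}$ in place of $S$), and closure under arbitrary, hence finite, direct sums is the implication $(1)\Rightarrow(3)$ of Proposition \ref{1.h16} (again with $R^{op}$ in place of $S$). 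Since each proposition above is formulated over an arbitrary ring, the substitution of $R^{op}$ is legitimate. This establishes that $(\mathcal{M},\mathcal{N})$ is a duality pair.

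With the duality pair in hand, I would invoke \cite[Theorem 3.1]{HJ}: for any duality pair $(\mathcal{M},\mathcal{N})$ the class $\mathcal{M}$ is automatically covering, which rests on the fact, also part of that theorem, that $\mathcal{M}$ is closed under pure submodules, pure quotients, and coproducts. To obtain the preenveloping conclusion I would additionally observe that the duality pair is product-closed: by Proposition \ref{1.h15} the class $\mathcal{I}_{K_{d-1}}^{(n,d)}(R)$ is closed under direct products. The product-closed case of \cite[Theorem 3.1]{HJ} then yields that $\mathcal{M}=\mathcal{I}_{K_{d-1}}^{(n,d)}(R)$ is preenveloping, completing (1).

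Part (2) follows the same three steps verbatim, now with $\mathcal{M}=\mathcal{F}_{K_{d-1}}^{(n,d)}(S)$ and $\mathcal{N}=\mathcal{I}_{K_{d-1}}^{(n,d)}(S^{op})$: axiom (1) is Proposition \ref{1.g1}(2); closure of $\mathcal{N}$ under summands and finite sums comes from Propositions \ref{1.h1} and \ref{1.h15} with $S^{op}$ in place of $R$; and product-closedness of $\mathcal{M}$ is Proposition \ref{1.h16}. The real content of the theorem is thus not any new computation but the previously established character-module duality (Proposition \ref{1.g1}) together with the closure properties (Propositions \ref{1.h1}, \ref{1.h15}, \ref{1.h16}). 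The only point that demands care is the bookkeeping of which ring, $R$, $S$, $R^{op}$, or $S^{op}$, each cited closure property is applied to, and confirming that \cite[Theorem 3.1]{HJ} delivers covering with no extra hypothesis while the preenveloping conclusion requires exactly the product-closedness we have verified.
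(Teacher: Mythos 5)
Your proof is correct and follows essentially the same route as the paper: verify the duality-pair axioms via Proposition \ref{1.g1} together with the closure properties of Propositions \ref{1.h1}, \ref{1.h15} and \ref{1.h16}, and then invoke \cite[Theorem 3.1]{HJ}. The one small correction is that covering is not automatic for a duality pair --- part (b) of that theorem requires $\mathcal{M}$ to be closed under coproducts --- but that hypothesis is exactly the direct-sum closure already supplied by Propositions \ref{1.h15} and \ref{1.h16}, so nothing essential is missing.
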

\begin{proof}
(1) By Propositions \ref{1.h1} and \ref{1.h15}, class $\mathcal{F}_{ K_{d-1}}^{(n,d)}(R^{op})$ is closed under direct summands and 
 direct sums.
By Proposition \ref{1.g1}(1), $M\in\mathcal{I}_{ K_{d-1}}^{(n,d)}(R)$ if and only if $M^{*}\in\mathcal{F}_{ K_{d-1}}^{(n,d)}(R^{op})$, and so we conclude that $(\mathcal{I}_{ K_{d-1}}^{(n,d)}(R), \mathcal{F}_{ K_{d-1}}^{(n,d)}(R^{op}))$ is a duality pair. Therefore, from Corollary \ref{1.l-1} and \cite[Theorem 3.1]{HJ}, the class $\mathcal{I}_{ K_{d-1}}^{(n,d)}(R)$  is  covering and preenveloping.

(2) The proof is similar to the proof of (1) by using Propositions \ref{1.g1}(2), \ref{1.h1}, \ref{1.h16}, Corollary \ref{1.l-1} and \cite[Theorem 3.1]{HJ}.
\end{proof}
\section{Foxby equivalence under special semidualizing bimodules}
\ \ \ \ In this section, we investigate Foxby equivalence relative to the classes $\mathcal{I}_{ K_{d-1}}^{(n,d)}(R)$  and $\mathcal{F}_{ K_{d-1}}^{(n,d)}(S)$. Then over $n$-coherent rings, we give homological behavior of the classes $\mathcal{I}_{ K_{d-1}}^{(n,d)}(R)_{<\infty}$  and $\mathcal{F}_{ K_{d-1}}^{(n,d)}(S)_{<\infty}$ with respect to extentions, kernels of epimorphisms and cokernels of monomorphisms.

\begin{proposition}\label{2.a1}
Tere are equivalences of
categories:

$
\xymatrix@C=3cm{
 (1) \ \mathcal{I}_{ K_{d-1}}^{(n,d)}(R)\ar@ <0.9ex>[r]^{\ \ \ \ \ \  K_{d-1}\otimes_{R^{-}}}_{\ \ \ \ \sim}& \ar@ <0.9ex>[l]^{\ \ \ \ {\rm Hom}_{S}(K_{d-1},-)} \mathcal{I}^{(n,d)}(S); \\
}
$

$
\xymatrix@C=3cm{
(2) \ \mathcal{F}^{(n,d)}(R)\ar@ <0.9ex>[r]^{\ \ \ \ K_{d-1}\otimes_{R^{-}}}_{\sim}& \ar@ <0.9ex>[l]^{\ \ \ \ {\rm Hom}_{S}(K_{d-1},-)} \mathcal{F}_{ K_{d-1}}^{(n,d)}(S). \\
}
$
\end{proposition}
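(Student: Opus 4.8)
The plan is to realise both equivalences as restrictions of the ambient Foxby equivalence $\mathcal{A}_{K_{d-1}}(R)\rightleftarrows\mathcal{B}_{K_{d-1}}(S)$ induced by the adjoint pair $(K_{d-1}\otimes_{R}-,\ {\rm Hom}_{S}(K_{d-1},-))$, whose unit $\mu$ and counit $\nu$ (the evaluation homomorphisms of Definition \ref{1.lk-4}) are natural isomorphisms on the Auslander and Bass classes, respectively, by the Foxby equivalence of \cite{HW}. All that remains is to check two things: that the indicated subcategories sit inside $\mathcal{A}_{K_{d-1}}(R)$ and $\mathcal{B}_{K_{d-1}}(S)$, and that the two functors carry them into one another. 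Both facts have already been isolated in the preceding results, so the statement is essentially a matter of assembling them.

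For $(1)$ I would first record that $\mathcal{I}^{(n,d)}(S)\subseteq\mathcal{B}_{K_{d-1}}(S)$ by Theorem \ref{1.b1}(1) and that $\mathcal{I}_{K_{d-1}}^{(n,d)}(R)\subseteq\mathcal{A}_{K_{d-1}}(R)$ by Proposition \ref{1.e1}(1). Next I verify that the functors restrict: if $M\in\mathcal{I}_{K_{d-1}}^{(n,d)}(R)$ then $K_{d-1}\otimes_{R}M\in\mathcal{I}^{(n,d)}(S)$, again by Proposition \ref{1.e1}(1); conversely, for $I\in\mathcal{I}^{(n,d)}(S)$ one has ${\rm Hom}_{S}(K_{d-1},I)\in\mathcal{I}_{K_{d-1}}^{(n,d)}(R)$, which is exactly Definition \ref{1.a1} (and is also recorded in Corollary \ref{1.k1}(2)). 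Because both subcategories lie inside the relevant Auslander/Bass class, the evaluation maps $\mu_{M}\colon M\to{\rm Hom}_{S}(K_{d-1},K_{d-1}\otimes_{R}M)$ and $\nu_{I}\colon K_{d-1}\otimes_{R}{\rm Hom}_{S}(K_{d-1},I)\to I$ are isomorphisms for the objects in question, yielding the natural isomorphisms ${\rm Hom}_{S}(K_{d-1},-)\circ(K_{d-1}\otimes_{R}-)\cong{\rm id}$ and $(K_{d-1}\otimes_{R}-)\circ{\rm Hom}_{S}(K_{d-1},-)\cong{\rm id}$ on the two subcategories; hence the pair is an equivalence. For $(2)$ the argument is symmetric, with the Auslander and Bass sides exchanged: here $\mathcal{F}^{(n,d)}(R)\subseteq\mathcal{A}_{K_{d-1}}(R)$ by Theorem \ref{1.b1}(2), $\mathcal{F}_{K_{d-1}}^{(n,d)}(S)\subseteq\mathcal{B}_{K_{d-1}}(S)$ by Proposition \ref{1.e1}(2), and the functors restrict by Definition \ref{1.a1} together with Corollary \ref{1.k1}(1) and Proposition \ref{1.e1}(2), after which the same evaluation isomorphisms close the argument.

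Since every ingredient—membership in the Auslander/Bass classes, preservation under the two functors, and the isomorphism property of $\mu$ and $\nu$—has been established upstream, I expect no genuine obstacle; the only point requiring care is the routine observation that naturality of $\mu$ and $\nu$ survives restriction to the subcategories (automatic, as the restriction of a natural transformation is natural). The conceptual weight of the proposition rests entirely on Theorem \ref{1.b1}, Proposition \ref{1.e1} and Corollary \ref{1.k1}, which this statement merely organises into the two stated category equivalences.
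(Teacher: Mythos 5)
Your proposal is correct and follows essentially the same route as the paper: both arguments use Definition \ref{1.a1} and Proposition \ref{1.e1} to see that the two functors restrict to the stated subcategories, then invoke Theorem \ref{1.b1} together with membership in $\mathcal{A}_{K_{d-1}}(R)$ and $\mathcal{B}_{K_{d-1}}(S)$ to conclude that the evaluation maps $\mu$ and $\nu$ are natural isomorphisms there, with part (2) handled dually.
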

\begin{proof}
 We consider that  the functor ${\rm Hom}_{S}(K_{d-1},-)$ maps $\mathcal{I}^{(n,d)}(S)$ to $\mathcal{I}_{ K_{d-1}}^{(n,d)}(R)$ by Definition \ref{1.a1},  and by Proposition \ref{1.e1}(1), the functor $K_{d-1}\otimes_{R}-$ maps   $\mathcal{I}_{ K_{d-1}}^{(n,d)}(R)$ to $\mathcal{I}^{(n,d)}(S)$. So, if $M\in\mathcal{I}^{(n,d)}(S)$, then by Theorem \ref{1.b1},  $M\in\mathcal{B}_{K_{d-1}}(S)$, and if $N\in\mathcal{I}_{ K_{d-1}}^{(n,d)}(R)$, then by Proposition \ref{1.e1}(1), $N\in\mathcal{A}_{ K_{d-1}}(R)$. Hence we have natural isomorphisms $M\cong K_{d-1}\otimes_{R}{\rm Hom}_{S}(K_{d-1},M)$ and $N\cong{\rm Hom}_{S}(K_{d-1},K_{d-1}\otimes_{R}N).$  Dually, we get the second one.
\end{proof}

\begin{definition}\label{2.c1}
{\rm Let $K_{d-1}$ be a special   faithfully semidualizing bimodule. Then,
the $K_{d-1}$-$(n,d)$-injective dimension of an $R$-module $M$ and $K_{d-1}$-$(n,d)$-flat dimension of an $S$-module $N$
are defined by 
$K_{d-1}$-$(n,d)$-${\rm id}_{R}(M)\leq k$ if  there exists an exact sequence  $$0\longrightarrow M\longrightarrow {\rm Hom}_{S}(K_{d-1},I_0) \longrightarrow \cdots\longrightarrow {\rm Hom}_{S}(K_{d-1},I_k)\longrightarrow0$$ of $R$-modules, where each $I_i\in\mathcal{I}^{(n,d)}(S)$, and 
$K_{d-1}$-$(n,d)$-${\rm fd}_{S}(N)\leq k$ if  there exists an exact sequence  $$0\longrightarrow K_{d-1}\otimes_{R}F_k\longrightarrow K_{d-1}\otimes_{R}F_{k-1} \longrightarrow\cdots\longrightarrow K_{d-1}\otimes_{R}F_0\longrightarrow N\longrightarrow 0$$ of $S$-modules, where each $F_i\in\mathcal{F}^{(n,d)}(R)$.}
\end{definition}
If $k=0$, then  $M$ and $N$ are $K_{d-1}$-$(n,d)$-injective and $K_{d-1}$-$(n,d)$-flat, respectively.
 We denote by $\mathcal{I}_{K_{d-1}}^{(n,d)}(R)_{\leq k}$ and $\mathcal{F}_{K_{d-1}}^{(n,d)}(S)_{\leq k}$ the classes of  $R$-modules with 
$K_{d-1}$-$(n,d)$-injective dimension and $S$-modules with $K_{d-1}$-$(n,d)$-flat dimension  at most $k$, respectively. 

The next result is a component of the Foxby equivalence, (see Theorem \ref{2.r1}).
\begin{proposition}\label{2.o1}
There is equivalence of
categories:
\begin{displaymath}
\xymatrix@C=3cm{
 \mathcal{A}_{ K_{d-1}}(R)\ar@ <0.8ex>[r]^{\ \ \ \ \ \  K_{d-1}\otimes_{R^{-}}}_{\ \ \ \ \sim}& \ar@ <0.8ex>[l]^{\ \ \ \ {\rm Hom}_{S}(K_{d-1},-)} \mathcal{B}_{K_{d-1}}(S)}
\end{displaymath}
\end{proposition}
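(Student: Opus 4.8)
The plan is to recognise this statement as the classical Foxby equivalence of Holm and White \cite{HW}, specialised to the semidualizing bimodule $K_{d-1}$. By Definition \ref{1.1} the bimodule $K_{d-1}$ is genuinely semidualizing, so every structural result of \cite{HW} concerning a semidualizing bimodule $C$ applies verbatim with $C$ replaced by $K_{d-1}$, exactly as in the arguments of Section 3 (for instance Propositions \ref{1.e1} and \ref{1.h15}). Accordingly the proof splits into two tasks: checking that each of the two functors carries one class into the other, and checking that the canonical unit and counit maps are isomorphisms on these classes.

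First I would confirm that the functors are well defined between the stated categories. For $N\in\mathcal{B}_{K_{d-1}}(S)$ the containment $\operatorname{Hom}_S(K_{d-1},N)\in\mathcal{A}_{K_{d-1}}(R)$ is \cite[Proposition 4.1]{HW} applied to $K_{d-1}$, already used in the proof of Proposition \ref{1.e1}. For $M\in\mathcal{A}_{K_{d-1}}(R)$ the containment $K_{d-1}\otimes_R M\in\mathcal{B}_{K_{d-1}}(S)$ can be read straight off the defining conditions of Definition \ref{1.lk-4}: condition $(B_1)$ for $K_{d-1}\otimes_R M$ is condition $(A_2)$ for $M$; condition $(A_3)$ identifies $\operatorname{Hom}_S(K_{d-1},K_{d-1}\otimes_R M)$ with $M$, so that $(B_2)$ for $K_{d-1}\otimes_R M$ reduces to $(A_1)$ for $M$, and $(B_3)$ follows from the same identification together with the triangle identity of the adjunction. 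Thus $K_{d-1}\otimes_R-$ restricts to a functor $\mathcal{A}_{K_{d-1}}(R)\to\mathcal{B}_{K_{d-1}}(S)$ and $\operatorname{Hom}_S(K_{d-1},-)$ restricts to a functor $\mathcal{B}_{K_{d-1}}(S)\to\mathcal{A}_{K_{d-1}}(R)$.

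Next I would show these restricted functors are mutually quasi-inverse. The pair $\bigl(K_{d-1}\otimes_R-,\ \operatorname{Hom}_S(K_{d-1},-)\bigr)$ is an adjoint pair whose unit is the evaluation map $\mu_M\colon M\to\operatorname{Hom}_S(K_{d-1},K_{d-1}\otimes_R M)$ and whose counit is $\nu_N\colon K_{d-1}\otimes_R\operatorname{Hom}_S(K_{d-1},N)\to N$. Condition $(A_3)$ of Definition \ref{1.lk-4} states precisely that $\mu_M$ is an isomorphism for every $M\in\mathcal{A}_{K_{d-1}}(R)$, and condition $(B_3)$ states precisely that $\nu_N$ is an isomorphism for every $N\in\mathcal{B}_{K_{d-1}}(S)$. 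Hence the unit and counit of the adjunction are natural isomorphisms on the two classes, which is exactly the assertion that the functors are inverse equivalences of categories.

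The only point requiring care is bookkeeping rather than new mathematics: one must ensure that the finiteness and vanishing hypotheses invoked for a general semidualizing $C$ in \cite{HW} are available for $K_{d-1}$. This is guaranteed by Definition \ref{1.1} and Definition \ref{1.lk-3}, since $K_{d-1}$, being a syzygy of the super finitely presented bimodule $C$ and semidualizing by hypothesis, admits degreewise finite projective resolutions on both sides and satisfies the homothety and $\operatorname{Ext}$-vanishing axioms. With those in hand the equivalence is formal, following immediately once the well-definedness of the two functors is established.
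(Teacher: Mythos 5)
Your proposal is correct and takes essentially the same route as the paper, which simply invokes \cite[Proposition 4.1]{HW} with $C$ replaced by the semidualizing bimodule $K_{d-1}$. You merely unpack the details of that citation (well-definedness of the two functors on the Auslander and Bass classes, and the unit/counit being isomorphisms via conditions $(A_3)$ and $(B_3)$), which is a faithful elaboration of the one-line proof given in the paper.
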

\begin{proof}
By replacing $K_{d-1}$ instead $C$ from \cite[Proposition 4.1]{HW}  follow.
\end{proof}
\begin{proposition}\label{2.d1}
Let $K_{d-1}$ be a  special faithfully semidualizing bimodule. Then, there are equivalences of
categories:

$
\xymatrix@C=3cm{
 (1) \ \ \ \ \ \ \ \mathcal{I}_{ K_{d-1}}^{(n,d)}(R)_{\leq k}\ar@ <0.9ex>[r]^{\ \ \ \ \ \  K_{d-1}\otimes_{R^{-}}}_{\ \ \ \ \sim}& \ar@ <0.9ex>[l]^{\ \ \ \ {\rm Hom}_{S}(K_{d-1},-)} \mathcal{I}^{(n,d)}(S)_{\leq k}; \\
}
$

$
\xymatrix@C=3cm{
(2)  \ \ \ \ \ \ \ \  \mathcal{F}^{(n,d)}(R)_{\leq k}\ar@ <0.9ex>[r]^{\ \ \ \ K_{d-1}\otimes_{R^{-}}}_{\sim}& \ar@ <0.9ex>[l]^{\ \ \ \ {\rm Hom}_{S}(K_{d-1},-)} \mathcal{F}_{ K_{d-1}}^{(n,d)}(S)_{\leq k}. \\
}
$
\end{proposition}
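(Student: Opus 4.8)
The plan is to promote the Foxby equivalence of Proposition \ref{2.o1} from the full Auslander and Bass classes to their dimension filtrations, exploiting that on $\mathcal{A}_{K_{d-1}}(R)$ and $\mathcal{B}_{K_{d-1}}(S)$ the functors $K_{d-1}\otimes_R-$ and $\operatorname{Hom}_S(K_{d-1},-)$ are mutually quasi-inverse and exact. I will prove (1) and then deduce (2). The reduction I would carry out is the $\leq k$ analogue of Proposition \ref{1.e1}(1): \emph{$M\in\mathcal{I}_{K_{d-1}}^{(n,d)}(R)_{\leq k}$ if and only if $M\in\mathcal{A}_{K_{d-1}}(R)$ and $K_{d-1}\otimes_R M\in\mathcal{I}^{(n,d)}(S)_{\leq k}$}, together with the symmetric statement for $\operatorname{Hom}_S(K_{d-1},-)$. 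Granting this, the unit $\mu$ and counit $\nu$, which are isomorphisms on $\mathcal{A}_{K_{d-1}}(R)$ and $\mathcal{B}_{K_{d-1}}(S)$ by Proposition \ref{2.o1}, immediately show that the two functors restrict to mutually inverse equivalences between $\mathcal{I}_{K_{d-1}}^{(n,d)}(R)_{\leq k}$ and $\mathcal{I}^{(n,d)}(S)_{\leq k}$.

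I would establish the reduction by induction on $k$, the base case $k=0$ being exactly Proposition \ref{1.e1}(1) (equivalently Proposition \ref{2.a1}). For the inductive step, from Definition \ref{2.c1} I split off the first cosyzygy $0\to M\to \operatorname{Hom}_S(K_{d-1},I_0)\to M_1\to 0$ with $M_1\in\mathcal{I}_{K_{d-1}}^{(n,d)}(R)_{\leq k-1}$. By the inductive hypothesis $M_1\in\mathcal{A}_{K_{d-1}}(R)$, and $\operatorname{Hom}_S(K_{d-1},I_0)\in\mathcal{I}_{K_{d-1}}^{(n,d)}(R)\subseteq\mathcal{A}_{K_{d-1}}(R)$ by Proposition \ref{1.e1}(1); the two-out-of-three property of the Auslander class (\cite[Theorem 6.2]{HW}) then forces $M\in\mathcal{A}_{K_{d-1}}(R)$. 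Hence $\operatorname{Tor}^{R}_{\geq 1}(K_{d-1},M_1)=0$, so applying $K_{d-1}\otimes_R-$ keeps the sequence exact and, using the counit isomorphism $K_{d-1}\otimes_R\operatorname{Hom}_S(K_{d-1},I_0)\cong I_0$, yields $0\to K_{d-1}\otimes_R M\to I_0\to K_{d-1}\otimes_R M_1\to 0$. Since $I_0$ is $(n,d)$-injective and $K_{d-1}\otimes_R M_1\in\mathcal{I}^{(n,d)}(S)_{\leq k-1}$, a dimension shift in the long exact $\operatorname{Ext}_S(U,-)$ sequence over finitely $n$-presented $U$ gives $(n,d).\operatorname{id}_S(K_{d-1}\otimes_R M)\leq k$. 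The converse inclusion is symmetric: using that every $(n,d)$-injective lies in $\mathcal{B}_{K_{d-1}}(S)$ (Theorem \ref{1.b1}), that $\mathcal{B}_{K_{d-1}}(S)$ also satisfies two-out-of-three, and that injectives are $(n,d)$-injective so a length-$k$ coresolution of $N\in\mathcal{I}^{(n,d)}(S)_{\leq k}$ by $(n,d)$-injectives exists, the functor $\operatorname{Hom}_S(K_{d-1},-)$ stays exact along that coresolution and produces the coresolution of Definition \ref{2.c1} witnessing $K_{d-1}$-$(n,d)$-$\operatorname{id}_R(\operatorname{Hom}_S(K_{d-1},N))\leq k$.

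The main obstacle is exactness of the functors along these (co)resolutions: tensoring and $\operatorname{Hom}$ are only one-sidedly exact, so every syzygy and cosyzygy must be kept inside the Auslander (respectively Bass) class. This is precisely where \cite[Theorem 6.2]{HW} and the containments $\mathcal{I}^{(n,d)}(S)\subseteq\mathcal{B}_{K_{d-1}}(S)$, $\mathcal{I}_{K_{d-1}}^{(n,d)}(R)\subseteq\mathcal{A}_{K_{d-1}}(R)$ from Theorem \ref{1.b1} and Proposition \ref{1.e1} become indispensable. A second point needing care is matching the $\operatorname{Ext}$-theoretic dimension of Definition \ref{2.bbb} with the coresolution length used in Definition \ref{2.c1}; since injective modules are $(n,d)$-injective, $N$ always admits a coresolution by $(n,d)$-injectives, and $(n,d).\operatorname{id}_S(N)\leq k$ is equivalent to the $k$-th cosyzygy being $(n,d)$-injective, so I would record this standard dimension-shifting equivalence explicitly. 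The faithfully semidualizing hypothesis enters through Corollary \ref{2.e1} and \cite[Theorem 6.3]{HW}, which guarantee that the objects of finite dimension genuinely lie in $\mathcal{A}_{K_{d-1}}(R)$ and $\mathcal{B}_{K_{d-1}}(S)$, so that the inverse isomorphisms $\mu,\nu$ are available throughout.

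Finally, I would obtain part (2) dually. One option is to rerun the argument verbatim with the roles of $K_{d-1}\otimes_R-$ and $\operatorname{Hom}_S(K_{d-1},-)$, and of $\mathcal{A}_{K_{d-1}}(R)$ and $\mathcal{B}_{K_{d-1}}(S)$, interchanged; the more economical route is to pass to character modules, applying part (1) over the opposite rings and using Proposition \ref{1.g1} (which exchanges the classes $\mathcal{I}_{K_{d-1}}^{(n,d)}$ and $\mathcal{F}_{K_{d-1}}^{(n,d)}$ under $(-)^{*}$) together with Proposition \ref{3-5-A-BB} (which exchanges $\mathcal{A}_{K_{d-1}}$ and $\mathcal{B}_{K_{d-1}}$ under $(-)^{*}$), after checking that $(-)^{*}$ carries the dimension filtration on one side to that on the other.
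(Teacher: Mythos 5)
Your proposal is correct and follows essentially the same route as the paper: both arguments hinge on the facts that the cosyzygies of a finite $(n,d)$-injective coresolution lie in $\mathcal{B}_{K_{d-1}}(S)$ (via Corollary \ref{2.e1}) and that the terms and kernels on the other side lie in $\mathcal{A}_{K_{d-1}}(R)$ (via Theorem \ref{1.b1} and \cite[Theorem 6.2]{HW}), so that $\operatorname{Hom}_S(K_{d-1},-)$ and $K_{d-1}\otimes_R-$ carry the coresolutions exactly to one another; your inductive splitting of the first cosyzygy is just a repackaging of the paper's direct treatment of the whole coresolution. For part (2) the paper uses exactly your ``more economical route'' through character modules, Proposition \ref{1.g1} and Proposition \ref{3-5-A-BB}.
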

\begin{proof}
(1) By proposition \ref{2.a1}(1), it is clear for $k=0$. Assume that $k\geq1$ and $M\in\mathcal{I}^{(n,d)}(S)_{\leq k}$. Then, there is an exact sequence $$0\longrightarrow M\longrightarrow I_0 \longrightarrow I_1 \longrightarrow\cdots\longrightarrow I_k\longrightarrow0$$ of $S$-modules, where each $I_j\in\mathcal{I}^{(n,d)}(S)$ for any $0\leq j\leq k$. It follows that $D_{j-1}\in\mathcal{I}^{(n,d)}(S)_{\leq k-j}$, where $D_{j}={\rm Coker}(I_{j-1}\rightarrow I_{j})$. Thus by Corollary \ref{2.e1}, $ D_{j-1}\in\mathcal{B}_{k_{d-1}}(S)$  for any $0\leq j\leq k$, and so we have  ${\rm Ext}_{S}^{i}(K_{d-1},D_{j-1})=0= {\rm Ext}_{S}^{i}(K_{d-1},M)$ for any $i\geq 1$ and $0\leq j\leq k$.
Thus, we obtain an exact sequence
$$0\longrightarrow {\rm Hom}_{S}(K_{d-1},M)\longrightarrow {\rm Hom}_{S}(K_{d-1},I_0)  \longrightarrow\cdots\longrightarrow {\rm Hom}_{S}(K_{d-1},I_k)\longrightarrow0$$ of  $R$-modules, where ${\rm Hom}_{S}(K_{d-1},I_j)\in\mathcal{I}_{K_{d-1}}^{(n,d)}(R)$, and so we deduce that ${\rm Hom}_{S}(K_{d-1},M)\in\mathcal{I}_{ K_{d-1}}^{(n,d)}(R)_{\leq k}.$

Conversely, let $N\in\mathcal{I}_{ K_{d-1}}^{(n,d)}(R)_{\leq k}$. Then  we have the following exact sequence of $R$-modules:
$$0\longrightarrow N\longrightarrow {\rm Hom}_{S}(K_{d-1},I_0)  \longrightarrow\cdots\longrightarrow {\rm Hom}_{S}(K_{d-1},I_k)\longrightarrow0,$$
where every $I_j\in\mathcal{I}^{(n,d)}(S)$ for any $0\leq j\leq k$. By Theorem \ref{1.b1}(1), $I_j\in\mathcal{B}_{K_{d-1}}(S)$, and hence by Proposition \ref{2.o1}, we get that ${\rm Hom}_{S}(K_{d-1},I_j)\in\mathcal{A}_{K_{d-1}}(R)$. Then by \cite[Theorem 6.2]{HW}, it follows that ${\rm ker}({\rm Hom}_{S}(K_{d-1},I_j)  \rightarrow{\rm Hom}_{S}(K_{d-1},I_{j+1}))\in\mathcal{A}_{K_{d-1}}(R)$. So we obtain the following exact sequence
$$0\rightarrow K_{d-1}\otimes_{R}N\rightarrow K_{d-1}\otimes_{R}{\rm Hom}_{S}(K_{d-1},I_0)  \rightarrow\cdots\rightarrow K_{d-1}\otimes_{R}{\rm Hom}_{S}(K_{d-1},I_k)\rightarrow0.$$
Also since $I_j\in\mathcal{B}_{K_{d-1}}(S)$, we have $K_{d-1}\otimes_{R}{\rm Hom}_{S}(K_{d-1},I_j)\cong I_{j}$, and then we get the following exact sequence
$$0\longrightarrow K_{d-1}\otimes_{R}N\longrightarrow I_0 \longrightarrow\cdots\longrightarrow I_k\longrightarrow0,$$
 and consequently $(K_{d-1}\otimes_{R}N)\in\mathcal{I}^{(n,d)}(S)_{\leq k}$.
  So for every $M\in\mathcal{I}^{(n,d)}(S)_{\leq k}$ and every $N\in\mathcal{I}_{ K_{d-1}}^{(n,d)}(R)_{\leq k}$, we deduce that $M\cong K_{d-1}\otimes_{R}{\rm Hom}_{S}(K_{d-1},M)$ and $N\cong {\rm Hom}_{S}(K_{d-1},K_{d-1}\otimes_{R}N).$

 (2) Let $M\in\mathcal{F}_{K_{d-1}}^{(n,d)}(S)_{\leq k}$. Then by Proposition \ref{1.g1}(2), $M^{*}\in\mathcal{I}_{K_{d-1}}^{(n,d)}(S^{op})_{\leq k}$. So by (1), $K_{d-1}\otimes_{S^{op}}M^{*}\cong M^{*}\otimes_{S}K_{d-1}\in\mathcal{I}^{(n,d)}(R^{op})_{\leq k}$. By \cite[Lemma 3.55]{Rot2}, $M^{*}\otimes_{S}K_{d-1}\cong {\rm Hom}_{S}(K_{d-1},M)^{*}$. Hence by \cite[Proposition 3.1]{TX} and Corollary \ref{1.l-1}(2), we can conclude that ${\rm Hom}_{S}(K_{d-1},M)\in\mathcal{F}^{(n,d)}(R)_{\leq k}$. If $N\in\mathcal{F}^{(n,d)}(R)_{\leq k}$, then by \cite[Proposition 2.3]{TX}, $N^{*}\in\mathcal{I}^{(n,d)}(R^{op})_{\leq k}$. So by (1),  ${\rm Hom}_{R^{op}}(K_{d-1},N^{*})\in\mathcal{I}_{K_{d-1}}^{(n,d)}(S^{op})_{\leq k}$. Since ${\rm Hom}_{R^{op}}(K_{d-1},N^{*})\cong (K_{d-1}\otimes_{R}N)^{*}$, we get that $K_{d-1}\otimes_{R}N\in  \mathcal{F}_{K_{d-1}}^{(n,d)}(S)_{\leq k}$ by Proposition \ref{1.g1}(2). 
\end{proof}
\begin{proposition}\label{2.t1}
Let $K_{d-1}$ be a  special faithfully semidualizing bimodule. Then the following statements hold.
\begin{enumerate}
\item [\rm (1)]
$\mathcal{I}_{ K_{d-1}}^{(n,d)}(R)_{\leq k}\subseteq\mathcal{A}_{ K_{d-1}}(R)$;
\item [\rm (2)]
$\mathcal{F}_{ K_{d-1}}^{(n,d)}(S)_{\leq k}\subseteq\mathcal{B}_{ K_{d-1}}(S)$.
 \end{enumerate}
\end{proposition}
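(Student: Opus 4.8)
The plan is to prove both containments by induction on the dimension $k$, reducing the defining coresolution (resp. resolution) to short exact sequences and then invoking the two-out-of-three closure property of the Auslander and Bass classes recorded in \cite[Theorem 6.2]{HW}. For $k=0$ there is nothing to prove: a module of $K_{d-1}$-$(n,d)$-injective dimension $0$ is by definition an object of $\mathcal{I}_{K_{d-1}}^{(n,d)}(R)$, which lies in $\mathcal{A}_{K_{d-1}}(R)$ by Proposition \ref{1.e1}(1); dually a module of $K_{d-1}$-$(n,d)$-flat dimension $0$ lies in $\mathcal{F}_{K_{d-1}}^{(n,d)}(S)\subseteq\mathcal{B}_{K_{d-1}}(S)$ by Proposition \ref{1.e1}(2). (Equivalently one may cite Theorem \ref{1.b1} together with the Foxby equivalence of Proposition \ref{2.o1}.)

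For part (1), fix $M\in\mathcal{I}_{K_{d-1}}^{(n,d)}(R)_{\leq k}$ with $k\geq 1$, so by Definition \ref{2.c1} there is an exact sequence
$$0\longrightarrow M\longrightarrow {\rm Hom}_{S}(K_{d-1},I_0)\longrightarrow\cdots\longrightarrow {\rm Hom}_{S}(K_{d-1},I_k)\longrightarrow 0$$
with each $I_j\in\mathcal{I}^{(n,d)}(S)$. Put $X_j={\rm Hom}_{S}(K_{d-1},I_j)$, so each $X_j\in\mathcal{A}_{K_{d-1}}(R)$ by the base case. Let $L=\operatorname{im}(X_0\to X_1)$. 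The truncated sequence $0\to L\to X_1\to\cdots\to X_k\to 0$ exhibits $L$ as an object of $\mathcal{I}_{K_{d-1}}^{(n,d)}(R)_{\leq k-1}$, hence $L\in\mathcal{A}_{K_{d-1}}(R)$ by the induction hypothesis. Applying \cite[Theorem 6.2]{HW} to the short exact sequence $0\to M\to X_0\to L\to 0$, in which the middle term $X_0$ and the right term $L$ both lie in $\mathcal{A}_{K_{d-1}}(R)$, closure under kernels of epimorphisms forces $M\in\mathcal{A}_{K_{d-1}}(R)$, completing the induction.

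Part (2) is proved dually. Given $N\in\mathcal{F}_{K_{d-1}}^{(n,d)}(S)_{\leq k}$ with $k\geq 1$, Definition \ref{2.c1} provides an exact sequence $0\to K_{d-1}\otimes_{R}F_k\to\cdots\to K_{d-1}\otimes_{R}F_0\to N\to 0$ with each $F_i\in\mathcal{F}^{(n,d)}(R)$, so each $K_{d-1}\otimes_{R}F_i$ lies in $\mathcal{F}_{K_{d-1}}^{(n,d)}(S)\subseteq\mathcal{B}_{K_{d-1}}(S)$. Setting $Z=\operatorname{im}(K_{d-1}\otimes_{R}F_1\to K_{d-1}\otimes_{R}F_0)$, the truncated resolution shows $Z\in\mathcal{F}_{K_{d-1}}^{(n,d)}(S)_{\leq k-1}$, hence $Z\in\mathcal{B}_{K_{d-1}}(S)$ by induction; then the short exact sequence $0\to Z\to K_{d-1}\otimes_{R}F_0\to N\to 0$ together with closure of $\mathcal{B}_{K_{d-1}}(S)$ under cokernels of monomorphisms (\cite[Theorem 6.2]{HW}) gives $N\in\mathcal{B}_{K_{d-1}}(S)$. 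Alternatively, (2) follows from (1) by passing to character modules: as in the proof of Proposition \ref{2.d1}(2), dualizing the flat resolution term-by-term via Proposition \ref{1.g1}(2) yields $N^{*}\in\mathcal{I}_{K_{d-1}}^{(n,d)}(S^{op})_{\leq k}$, whence $N^{*}\in\mathcal{A}_{K_{d-1}}(S^{op})$ by part (1) and $N\in\mathcal{B}_{K_{d-1}}(S)$ by Proposition \ref{3-5-A-BB}(2).

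The only genuine input is the two-out-of-three property of \cite[Theorem 6.2]{HW}, already used in Propositions \ref{1.f1} and \ref{2.d1}; the rest is bookkeeping. The point that requires care is invoking this closure in the correct direction — kernels of epimorphisms for $\mathcal{A}_{K_{d-1}}(R)$ and cokernels of monomorphisms for $\mathcal{B}_{K_{d-1}}(S)$ — and observing that truncating a length-$k$ coresolution (resp. resolution) presents the relevant image module with a resolution of length $k-1$, which is exactly what drives the induction.
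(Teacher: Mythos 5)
Your proof is correct and follows essentially the same route as the paper: the base case is Proposition \ref{1.e1}, part (1) reduces to the finite coresolution by objects of $\mathcal{I}_{K_{d-1}}^{(n,d)}(R)$ together with the closure properties of $\mathcal{A}_{K_{d-1}}(R)$ from \cite[Theorem 6.2]{HW} (you simply make explicit the induction that the paper compresses into a single invocation of that theorem), and your alternative argument for part (2) — dualizing to $N^{*}\in\mathcal{I}_{K_{d-1}}^{(n,d)}(S^{op})_{\leq k}$ via Proposition \ref{1.g1}(2), applying part (1), and concluding with Proposition \ref{3-5-A-BB}(2) — is exactly the paper's proof.
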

\begin{proof}
(1) Let $M\in \mathcal{I}_{ K_{d-1}}^{(n,d)}(R)_{\leq k}$. If $k=0$, then $M\in\mathcal{I}_{ K_{d-1}}^{(n,d)}(R)$ and so by Proposition \ref{1.e1}(1), $M\in\mathcal{A}_{ K_{d-1}}(R)$.
If $k\geq 1$, then  there exists an exact sequence
$$0\longrightarrow M\longrightarrow {\rm Hom}_{S}(K_{d-1},I_0) \longrightarrow {\rm Hom}_{S}(K_{d-1},I_1) \longrightarrow\cdots\longrightarrow {\rm Hom}_{S}(K_{d-1},I_k)\longrightarrow0$$ of $R$-modules, where each $I_j\in\mathcal{I}^{(n,d)}(S)$ for any $0\leq j\leq k$. Every ${\rm Hom}_{S}(K_{d-1},I_j) \in\mathcal{I}_{ K_{d-1}}^{(n,d)}(R)$, and hence  \cite[Theorem 6.2]{HW} implies that $M\in\mathcal{A}_{ K_{d-1}}(R)$.

(2) Let $N\in \mathcal{F}_{ K_{d-1}}^{(n,d)}(S)_{\leq k}$. Then by Proposition \ref{1.g1}(2), $N^{*}\in \mathcal{I}_{ K_{d-1}}^{(n,d)}(S^{op})_{\leq k}$, and so by (1), $N^{*}\in\mathcal{A}_{ K_{d-1}}(S^{op})$. Hence Proposition \ref{3-5-A-BB}(2) implies that $N\in\mathcal{B}_{ K_{d-1}}(S)$.
\end{proof}
Using Theorem \ref{1.b1}, Popositions \ref{2.a1}, \ref{2.o1}, \ref{2.d1} and \ref{2.t1},  one of the main results is obtained as follows.
\begin{theorem}{\rm (Foxby Equivalence)}\label{2.r1}
Let $K_{d-1}$ be a special faithfully semidualizing bimodule. Then, there is equivalences of
categories:
\begin{displaymath}
\xymatrix@C=3cm{
\mathcal{F}^{(n,d)}(R)\ar@ <0.9ex>[r]^{K_{d-1}\otimes_{R^{-}}}_{\sim}\ar@{^{(}->}[d]& \ar@ <0.9ex>[l]^{Hom_{S}(K_{d-1},-)} \mathcal{F}_{ K_{d-1}}^{(n,d)}(S)\ar@{^{(}->}[d] \\
\mathcal{F}^{(n,d)}(R)_{\leq k}\ar@ <0.9ex>[r]^{K_{d-1}\otimes_{R^{-}}}_{\sim}\ar@{^{(}->}[d]& \ar@ <0.9ex>[l]^{Hom_{S}(K_{d-1},-)} \mathcal{F}_{ K_{d-1}}^{(n,d)}(S)_{\leq k}\ar@{^{(}->}[d] \\
\mathcal{A}_{K_{d-1}}(R)\ar@ <0.9ex>[r]^{K_{d-1}\otimes_{R^{-}}}_{\sim}& \ar@ <0.9ex>[l]^{Hom_{S}(K_{d-1},-)} \mathcal{B}_{K_{d-1}}(S) \\
\mathcal{I}_{ K_{d-1}}^{(n,d)}(R)_{\leq k}\ar@ <0.9ex>[r]^{K_{d-1}\otimes_{R^{-}}}_{\sim}\ar@{^{(}->}[u]& \ar@ <0.9ex>[l]^{Hom_{S}(K_{d-1},-)} \mathcal{I}^{(n,d)}(S)_{\leq k}\ar@{^{(}->}[u] \\
\mathcal{I}_{ K_{d-1}}^{(n,d)}(R)\ar@ <0.9ex>[r]^{K_{d-1}\otimes_{R^{-}}}_{\sim}\ar@{^{(}->}[u]& \ar@ <0.9ex>[l]^{Hom_{S}(K_{d-1},-)} \mathcal{I}^{(n,d)}(S)\ar@{^{(}->}[u] .
}
\end{displaymath}
\end{theorem}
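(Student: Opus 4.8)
The plan is to read the five horizontal equivalences of the tower straight off the propositions already proved, and then to check that the vertical full-subcategory inclusions are compatible with the adjoint pair $K_{d-1}\otimes_R-$ and $\operatorname{Hom}_S(K_{d-1},-)$, so that the whole diagram commutes. First I would install the rows. The middle row $\mathcal{A}_{K_{d-1}}(R)\leftrightarrow\mathcal{B}_{K_{d-1}}(S)$ is exactly Proposition \ref{2.o1}. The two bottom rows $\mathcal{I}_{K_{d-1}}^{(n,d)}(R)\leftrightarrow\mathcal{I}^{(n,d)}(S)$ and $\mathcal{I}_{K_{d-1}}^{(n,d)}(R)_{\leq k}\leftrightarrow\mathcal{I}^{(n,d)}(S)_{\leq k}$ are Propositions \ref{2.a1}(1) and \ref{2.d1}(1), while the two top rows $\mathcal{F}^{(n,d)}(R)\leftrightarrow\mathcal{F}_{K_{d-1}}^{(n,d)}(S)$ and $\mathcal{F}^{(n,d)}(R)_{\leq k}\leftrightarrow\mathcal{F}_{K_{d-1}}^{(n,d)}(S)_{\leq k}$ are Propositions \ref{2.a1}(2) and \ref{2.d1}(2). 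In every case the mutually inverse equivalences are restrictions of the same two functors, so the labelling on all horizontal arrows is consistent.

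Next I would verify the vertical inclusions. The ``dimension $0\subseteq$ dimension $\leq k$'' inclusions, namely $\mathcal{F}^{(n,d)}(R)\subseteq\mathcal{F}^{(n,d)}(R)_{\leq k}$, $\mathcal{I}_{K_{d-1}}^{(n,d)}(R)\subseteq\mathcal{I}_{K_{d-1}}^{(n,d)}(R)_{\leq k}$, and their $S$-side duals, are immediate from the definition of $(n,d)$-(flat/injective) dimension. For the inclusions into the middle row, on the Auslander side I would use Corollary \ref{2.e1} for $\mathcal{F}^{(n,d)}(R)_{\leq k}\subseteq\mathcal{F}^{(n,d)}(R)_{<\infty}\subseteq\mathcal{A}_{K_{d-1}}(R)$ and Proposition \ref{2.t1}(1) for $\mathcal{I}_{K_{d-1}}^{(n,d)}(R)_{\leq k}\subseteq\mathcal{A}_{K_{d-1}}(R)$; symmetrically, on the Bass side I would use Corollary \ref{2.e1} for $\mathcal{I}^{(n,d)}(S)_{\leq k}\subseteq\mathcal{B}_{K_{d-1}}(S)$ (together with Theorem \ref{1.b1}(1)) and Proposition \ref{2.t1}(2) for $\mathcal{F}_{K_{d-1}}^{(n,d)}(S)_{\leq k}\subseteq\mathcal{B}_{K_{d-1}}(S)$. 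This places the entire left column inside $\mathcal{A}_{K_{d-1}}(R)$ and the entire right column inside $\mathcal{B}_{K_{d-1}}(S)$.

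Finally I would record commutativity. Because each horizontal arrow is a restriction of the single adjoint pair and each vertical arrow is a full-subcategory inclusion, every square commutes on the nose; the only thing needing checking is that the unit and counit restrict to isomorphisms on each subcategory. But by the inclusions just established every object of the left column lies in $\mathcal{A}_{K_{d-1}}(R)$ and every object of the right column lies in $\mathcal{B}_{K_{d-1}}(S)$, so the evaluation maps $\mu_M\colon M\to\operatorname{Hom}_S(K_{d-1},K_{d-1}\otimes_R M)$ and $\nu_N\colon K_{d-1}\otimes_R\operatorname{Hom}_S(K_{d-1},N)\to N$ are already isomorphisms there. Hence the global Foxby equivalence of Proposition \ref{2.o1} restricts to each matched pair of subcategories, yielding the asserted tower of equivalences.

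The main obstacle is not a genuine difficulty but the bookkeeping of these restrictions: the one point that must be handled with care is that the finite-dimension subcategories $\mathcal{F}^{(n,d)}(R)_{\leq k}$, $\mathcal{I}_{K_{d-1}}^{(n,d)}(R)_{\leq k}$ and their duals actually embed into $\mathcal{A}_{K_{d-1}}(R)$ and $\mathcal{B}_{K_{d-1}}(S)$, since it is precisely this membership that makes $\mu$ and $\nu$ invertible and thereby turns each row into an equivalence compatible with the inclusions; this is exactly what Corollary \ref{2.e1} and Proposition \ref{2.t1} supply.
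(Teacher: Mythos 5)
Your proposal is correct and follows essentially the same route as the paper, which obtains the theorem by assembling Theorem \ref{1.b1} and Propositions \ref{2.a1}, \ref{2.o1}, \ref{2.d1} and \ref{2.t1} exactly as you do. Your extra care in checking that the finite-dimension subcategories land inside $\mathcal{A}_{K_{d-1}}(R)$ and $\mathcal{B}_{K_{d-1}}(S)$ (via Corollary \ref{2.e1} and Proposition \ref{2.t1}), so that the unit and counit restrict to isomorphisms, only makes explicit what the paper leaves implicit.
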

\begin{corollary}\label{1.ee1}
Let $K_{d-1}$ be a special faithfully semidualizing bimodule. Then   the following assertions hold:
\begin{enumerate}
\item [\rm (1)]
$M\in\mathcal{I}_{K_{d-1}}^{(n,d)}(R)_{\leq k}$ if and only if $M\in\mathcal{A}_{K_{d-1}}(R)$ and $ K_{d-1}\otimes_{R}M \in\mathcal{I}^{(n,d)}(S)_{\leq k}$;
\item [\rm (2)]
 $N\in\mathcal{F}_{K_{d-1}}^{(n,d)}(S)_{\leq k}$ if and only if $N\in\mathcal{B}_{K_{d-1}}(S)$ and ${\rm Hom}_{S}(K_{d-1},N)\in\mathcal{F}^{(n,d)}(R)_{\leq k}$.
\end{enumerate}
\end{corollary}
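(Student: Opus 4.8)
The plan is to obtain this corollary as the dimension-graded refinement of Proposition~\ref{1.e1}, reading off both directions from the level-$k$ Foxby equivalence of Proposition~\ref{2.d1} together with the containments of Proposition~\ref{2.t1}. The argument parallels the $k=0$ case almost verbatim, with the evaluation isomorphisms of the Auslander and Bass classes playing the role the corresponding isomorphisms played there.

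For part~(1), suppose first that $M\in\mathcal{I}_{K_{d-1}}^{(n,d)}(R)_{\leq k}$. Proposition~\ref{2.t1}(1) gives $M\in\mathcal{A}_{K_{d-1}}(R)$ at once, while the functor $K_{d-1}\otimes_{R}-$ of the equivalence in Proposition~\ref{2.d1}(1) carries $M$ to $K_{d-1}\otimes_{R}M\in\mathcal{I}^{(n,d)}(S)_{\leq k}$. Conversely, if $M\in\mathcal{A}_{K_{d-1}}(R)$ and $K_{d-1}\otimes_{R}M\in\mathcal{I}^{(n,d)}(S)_{\leq k}$, then applying ${\rm Hom}_{S}(K_{d-1},-)$ from Proposition~\ref{2.d1}(1) to $K_{d-1}\otimes_{R}M$ produces ${\rm Hom}_{S}(K_{d-1},K_{d-1}\otimes_{R}M)\in\mathcal{I}_{K_{d-1}}^{(n,d)}(R)_{\leq k}$. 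Since $M$ lies in the Auslander class, condition $(A_3)$ makes the evaluation map $\mu_{M}$ an isomorphism, so $M\cong{\rm Hom}_{S}(K_{d-1},K_{d-1}\otimes_{R}M)$ and therefore $M\in\mathcal{I}_{K_{d-1}}^{(n,d)}(R)_{\leq k}$.

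Part~(2) runs symmetrically with the roles of the two classes interchanged. If $N\in\mathcal{F}_{K_{d-1}}^{(n,d)}(S)_{\leq k}$, then $N\in\mathcal{B}_{K_{d-1}}(S)$ by Proposition~\ref{2.t1}(2), and ${\rm Hom}_{S}(K_{d-1},N)\in\mathcal{F}^{(n,d)}(R)_{\leq k}$ via the functor ${\rm Hom}_{S}(K_{d-1},-)$ of Proposition~\ref{2.d1}(2). Conversely, given $N\in\mathcal{B}_{K_{d-1}}(S)$ and ${\rm Hom}_{S}(K_{d-1},N)\in\mathcal{F}^{(n,d)}(R)_{\leq k}$, Proposition~\ref{2.d1}(2) yields $K_{d-1}\otimes_{R}{\rm Hom}_{S}(K_{d-1},N)\in\mathcal{F}_{K_{d-1}}^{(n,d)}(S)_{\leq k}$, and the Bass-class condition $(B_3)$ makes $\nu_{N}$ an isomorphism, so $N\cong K_{d-1}\otimes_{R}{\rm Hom}_{S}(K_{d-1},N)\in\mathcal{F}_{K_{d-1}}^{(n,d)}(S)_{\leq k}$.

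Almost every step here is a direct invocation of an equivalence already proved, so the only point needing genuine care is the converse direction, where I must be sure that the natural evaluation isomorphism supplied by the Auslander/Bass-class membership identifies $M$ (resp.\ $N$) with the \emph{particular} object ${\rm Hom}_{S}(K_{d-1},K_{d-1}\otimes_{R}M)$ (resp.\ $K_{d-1}\otimes_{R}{\rm Hom}_{S}(K_{d-1},N)$) that the equivalence places in the graded class, rather than merely with some abstractly isomorphic module. Because $\mu_{M}$ and $\nu_{N}$ are precisely the natural transformations implementing the equivalences of Proposition~\ref{2.d1}, this compatibility is automatic, and I expect no real obstacle beyond this verification.
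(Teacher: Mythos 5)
Your proof is correct and follows essentially the same route as the paper's: both directions are read off from the level-$k$ Foxby equivalence of Proposition~\ref{2.d1} together with the Auslander/Bass containments (Proposition~\ref{2.t1}, packaged in the paper as Theorem~\ref{2.r1}) and the evaluation isomorphisms $\mu_M$, $\nu_N$. The only organizational difference is in part~(2), which the paper reduces to part~(1) by character-module duality via Propositions~\ref{1.g1} and~\ref{3-5-A-BB}, whereas you argue directly and symmetrically through Proposition~\ref{2.d1}(2); this is equally valid since that proposition already encapsulates the duality.
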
 
\begin{proof}
(1) $(\Longrightarrow)$ Let $M\in\mathcal{I}_{K_{d-1}}^{(n,d)}(R)_{\leq k}$. Then by Theorem \ref{2.r1}, $M\in\mathcal{A}_{ K_{d-1}}(R)$, and also by Proposition \ref{2.d1}(1), $ K_{d-1}\otimes_{R}M \in\mathcal{I}^{(n,d)}(S)_{\leq k}$.

$(\Longleftarrow)$ Let $M\in\mathcal{A}_{K_{d-1}}(R)$ and $ K_{d-1}\otimes_{R}M \in\mathcal{I}^{(n,d)}(S)_{\leq k}$.  Then it follows that  ${\rm Hom}_S(K_{d-1},K_{d-1}\otimes_{R}M)\cong M$, and also, there is an exact sequence 
$$0\longrightarrow K_{d-1}\otimes_{R}M\longrightarrow I_{0} \longrightarrow I_{1}\longrightarrow\cdots\longrightarrow I_{k}\longrightarrow0,$$
where any $I_{i}\in\mathcal{I}^{(n,d)}(S)$. So, there exists the following exact sequence of $R$-modules:
$$0\longrightarrow M\longrightarrow {\rm Hom}_{S}(K_{d-1},I_{0}) \longrightarrow\cdots\longrightarrow {\rm Hom}_{S}(K_{d-1},I_{k})\longrightarrow0,$$
where every ${\rm Hom}_{S}(K_{d-1},I_{i})\in\mathcal{I}_{K_{d-1}}^{(n,d)}(R)$, and then $M\in\mathcal{I}_{K_{d-1}}^{(n,d)}(R)_{\leq k}$.

(2) $(\Longrightarrow)$ Let $N\in\mathcal{F}_{K_{d-1}}^{(n,d)}(S)_{\leq k}$. Then by Proposition \ref{1.g1}(2), $N^{*}\in\mathcal{I}_{K_{d-1}}^{(n,d)}(S^{op})_{\leq k}$. So by (1), $N^{*}\in\mathcal{A}_{K_{d-1}}(S^{op})$ and $ K_{d-1}\otimes_{S^{op}}N^{*} \in\mathcal{I}^{(n,d)}(R^{op})_{\leq k}$. By Proposition \ref{3-5-A-BB}(2), $N\in\mathcal{B}_{K_{d-1}}(S)$. Also, by \cite[Proposition 2.56]{Rot2}, we have $ K_{d-1}\otimes_{S^{op}}N^{*}\cong N^{*}\otimes_{S}K_{d-1}$, and by \cite[Lemma 3.55]{Rot2},  $N^{*}\otimes_{S}K_{d-1}\cong {\rm Hom}_{S}(K_{d-1},N)^{*}$. So ${\rm Hom}_{S}(K_{d-1},N)^{*}\in\mathcal{I}^{(n,d)}(R^{op})_{\leq k}$ and consequently ${\rm Hom}_{S}(K_{d-1},N)\in\mathcal{F}^{(n,d)}(R)_{\leq k}$  by \cite[Proposition 3.1]{TX} and Corollary \ref{1.l-1}(2).

$(\Longleftarrow)$ It follows from  \cite[Proposition 2.3]{TX} and  Propositions \ref{1.g1}(1) and \ref{3-5-A-BB}(1).
\end{proof}

\begin{proposition}\label{2.s1}
Let $K_{d-1}$ be a special faithfully semidualizing bimodule. Then the following equalities hold.
\begin{enumerate}
\item [\rm (1)]
$(n,d)$.${\rm id}_{S}(M)=K_{d-1}$-$(n,d)$.${\rm id}_{R}({\rm Hom}_{S}(K_{d-1},M))$ for any $S$-module $M$;
\item [\rm (2)]
 $(n,d)$.${\rm fd}_{R}(M)=K_{d-1}$-$(n,d)$.${\rm fd}_{S}(K_{d-1}\otimes_{R}M)$ for any $R$-module $M$;
 \item [\rm (3)]
$K_{d-1}$-$(n,d)$.${\rm fd}_{S}(M)=(n,d)$.${\rm fd}_{R}({\rm Hom}_{S}(K_{d-1},M))$ for any $S$-module $M$;
\item [\rm (4)]
 $K_{d-1}$-$(n,d)$.${\rm id}_{R}(M)=(n,d)$.${\rm id}_{S}(K_{d-1}\otimes_{R}M)$ for any $R$-module $M$.
 \end{enumerate}
\end{proposition}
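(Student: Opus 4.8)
The plan is to read the four equalities as the numerical content of the Foxby equivalences already recorded in Proposition \ref{2.d1} and Theorem \ref{2.r1}: each identity asserts that the quasi-inverse functors $K_{d-1}\otimes_R-$ and ${\rm Hom}_S(K_{d-1},-)$ carry a shortest coresolution by objects of one class to a coresolution of the same length by objects of the corresponding class. Before doing anything I would halve the work by character-module duality. Using the standard isomorphisms $(K_{d-1}\otimes_R M)^*\cong{\rm Hom}_{R^{op}}(K_{d-1},M^*)$ and ${\rm Hom}_S(K_{d-1},M)^*\cong K_{d-1}\otimes_{S^{op}}M^*$ from \cite[Lemma 3.55, Proposition 2.56, Theorem 2.76]{Rot2}, together with \cite[Propositions 2.3 and 3.1]{TX} and Proposition \ref{1.g1} promoted to the level of dimensions (since $(-)^*$ interchanges the relevant ${\rm Ext}$ and ${\rm Tor}$, a module vanishes iff its character module does, so the defining vanishing conditions of Definition \ref{2.bbb} transfer verbatim), statement (2) becomes statement (1) read over the opposite rings $R^{op},S^{op}$ applied to $M^*$, and statement (3) becomes statement (4) over the opposite rings. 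Thus it suffices to prove (1) and (4).

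The cleanest route to each of these is to verify, for every fixed non-negative integer $k$, that the left-hand dimension is $\le k$ if and only if the right-hand dimension is $\le k$; granting this for all $k$ makes both the finite and the infinite cases fall out at once, since an extended integer is determined by the set of $k$ for which it is $\le k$. For (4), with $M$ an $R$-module, the defining inequality $K_{d-1}\text{-}(n,d).{\rm id}_R(M)\le k$ means exactly $M\in\mathcal I_{K_{d-1}}^{(n,d)}(R)_{\le k}$, which by Corollary \ref{1.ee1}(1) is the conjunction $M\in\mathcal A_{K_{d-1}}(R)$ and $(n,d).{\rm id}_S(K_{d-1}\otimes_R M)\le k$. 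Hence the implication $(\Rightarrow)$ of the $k$-biconditional is immediate, and once one also knows $M\in\mathcal A_{K_{d-1}}(R)$ the two dimensions agree value by value.

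For (1), with $M$ an $S$-module, the easy implication runs the other way: if $(n,d).{\rm id}_S(M)\le k$, i.e. $M\in\mathcal I^{(n,d)}(S)_{\le k}$, then Proposition \ref{2.d1}(1) places ${\rm Hom}_S(K_{d-1},M)$ in $\mathcal I_{K_{d-1}}^{(n,d)}(R)_{\le k}$, giving $K_{d-1}\text{-}(n,d).{\rm id}_R({\rm Hom}_S(K_{d-1},M))\le k$. For the converse, suppose ${\rm Hom}_S(K_{d-1},M)\in\mathcal I_{K_{d-1}}^{(n,d)}(R)_{\le k}$. Corollary \ref{1.ee1}(1) then yields $K_{d-1}\otimes_R{\rm Hom}_S(K_{d-1},M)\in\mathcal I^{(n,d)}(S)_{\le k}$, and Proposition \ref{2.t1}(1) gives ${\rm Hom}_S(K_{d-1},M)\in\mathcal A_{K_{d-1}}(R)$; provided one knows in addition that $M\in\mathcal B_{K_{d-1}}(S)$, the evaluation $\nu_M\colon K_{d-1}\otimes_R{\rm Hom}_S(K_{d-1},M)\to M$ is an isomorphism, whence $M\in\mathcal I^{(n,d)}(S)_{\le k}$ and the biconditional closes. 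When the left-hand side is already known to be finite, Corollary \ref{2.e1} supplies this membership directly, since $\mathcal I^{(n,d)}(S)_{<\infty}\subseteq\mathcal B_{K_{d-1}}(S)$.

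The genuine obstacle, and the step I would devote the most care to, is the converse implication for modules lying outside the Auslander and Bass classes, which is what remains once the $k$-biconditional is reduced as above. Concretely, in (4) I must forbid the pathology $K_{d-1}\text{-}(n,d).{\rm id}_R(M)=\infty$ together with $(n,d).{\rm id}_S(K_{d-1}\otimes_R M)<\infty$; there Corollary \ref{2.e1} only places $K_{d-1}\otimes_R M$ in $\mathcal B_{K_{d-1}}(S)$, which does not by itself return $M$ to $\mathcal A_{K_{d-1}}(R)$, because the vanishing of the higher ${\rm Tor}_{>0}^R(K_{d-1},M)$ and the isomorphism $\mu_M$ are not delivered by the plain equivalence of Proposition \ref{2.o1}. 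Likewise, in (1) one needs the reflection ``${\rm Hom}_S(K_{d-1},M)\in\mathcal A_{K_{d-1}}(R)\Rightarrow M\in\mathcal B_{K_{d-1}}(S)$''. The plan is to extract both reflection principles from the faithfulness hypothesis on $K_{d-1}$—the same hypothesis underlying Corollary \ref{2.e1} and \cite[Theorem 6.3]{HW}—so that finiteness on one side forces finiteness on the other; once this is secured, Corollary \ref{1.ee1} and the isomorphism $\nu_M$ (resp. $\mu_M$) contradict the assumed gap, and the finite-case computations of the two preceding paragraphs then pin the common value, proving (1) and (4) and, by the duality of the first paragraph, (2) and (3).
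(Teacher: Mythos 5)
Your proposal is correct, and its mathematical crux coincides with the paper's: both arguments reduce each equality to the two implications ``one side $\le k$ $\Rightarrow$ the other side $\le k$'', obtain the easy implication from the bounded Foxby equivalence, and concentrate the entire difficulty of the converse in the reflection ``$K_{d-1}\otimes_{R}M\in\mathcal{B}_{K_{d-1}}(S)\Rightarrow M\in\mathcal{A}_{K_{d-1}}(R)$'' (and its ${\rm Hom}$-dual), which is where faithfulness enters. The one step you leave as a plan rather than a proof --- extracting these reflections from the faithfulness of $K_{d-1}$ --- is precisely the point at which the paper invokes \cite[Lemma 2.9]{Z.TT} (an instance of the Holm--White reflection results for faithfully semidualizing bimodules \cite{HW}), so you have correctly isolated the load-bearing external input rather than missed an idea. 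The packaging, however, is genuinely different. The paper proves item (2) by hand: it tensors a length-$k$ resolution by modules of $\mathcal{F}^{(n,d)}(R)$ with $K_{d-1}$, then applies ${\rm Hom}_{S}(K_{d-1},-)$ to the resulting complex and checks exactness term by term through the Auslander/Bass vanishing conditions and a commutative diagram, declaring (1), (3), (4) ``similar''. You instead prove (1) and (4) by quoting the already-established Proposition \ref{2.d1}, Corollary \ref{1.ee1}, Proposition \ref{2.t1} and Corollary \ref{2.e1}, and recover (2) and (3) by character-module duality over the opposite rings. Your organization is arguably cleaner --- it avoids redoing the diagram chase and makes visible that only two of the four statements need independent proof --- at the cost of relying on the dimension-level upgrade of Proposition \ref{1.g1} and of \cite[Propositions 2.3 and 3.1]{TX}, which is legitimate for exactly the reason you state: $(-)^{*}$ converts the defining ${\rm Tor}$-vanishing into ${\rm Ext}$-vanishing, and a module vanishes if and only if its character module does.
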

\begin{proof}
(2) Suppose that $(n,d)$.${\rm fd}_{R}(M)=k<\infty$. Then by Theorem \ref{2.r1}, $M\in\mathcal{A}_{ K_{d-1}}(R)$, and so ${\rm Tor}^{R}_{i}(K_{d-1},M)=0$ for any $i\geq 1$. Also, there exists an exact  sequence of the form $$0\longrightarrow F_k\longrightarrow \cdots \longrightarrow F_1\longrightarrow F_0 \longrightarrow M\longrightarrow0,$$ where any $F_j\in\mathcal{F}^{(n,d)}(R)$ for $0\leq j\leq k$. So we have the following exact sequence:
$$0\longrightarrow K_{d-1}\otimes_{R}F_k\longrightarrow \cdots \longrightarrow  K_{d-1}\otimes_{R}F_0 \longrightarrow K_{d-1}\otimes_{R}M\longrightarrow0,$$
where any  $ K_{d-1}\otimes_{R}F_j \in\mathcal{F}_{K_{d-1}}^{(n,d)}(S)$ and so $K_{d-1}$-$(n,d)$.${\rm fd}_{S}(K_{d-1}\otimes_{R}M)\leq k$.

Conversely, If $K_{d-1}$-$(n,d)$.${\rm fd}_{S}(K_{d-1}\otimes_{R}M)=k<\infty$, then by Proposition \ref{2.t1}(2), $ K_{d-1}\otimes_{R}M \in\mathcal{B}_{ K_{d-1}}(S)$. Hence by replacing $K_{d-1}$ instead $_SC_R$ from \cite[Lemma 2.9]{Z.TT}, we deduce that $M\in\mathcal{A}_{ K_{d-1}}(R)$, and consequently, we have isomorphism
$M\cong {\rm Hom}_{R}(K_{d-1},K_{d-1}\otimes_{R}M)$. Also,  there exists an exact
sequence $$\mathcal{X}=0\longrightarrow K_{d-1}\otimes_{R}F_k\longrightarrow \cdots \longrightarrow K_{d-1}\otimes_{R}F_1\longrightarrow K_{d-1}\otimes_{R}F_0 \longrightarrow K_{d-1}\otimes_{R}M\longrightarrow0,$$
of $S$-modules, where  $F_{j}\in\mathcal{F}^{(n,d)}(R)$ for any  $0\leq j\leq k$. On the other hand, by Proposition \ref{1.e1}(2), we have $ K_{d-1}\otimes_{R}F_j \in\mathcal{B}_{ K_{d-1}}(S)$, since $ K_{d-1}\otimes_{R}F_j \in\mathcal{F}_{K_{d-1}}^{(n,d)}(S)$. Therefore by  Definition of Bass,   for any $i\geq 1$ we have 
$${\rm Ext}^{i}_{S}(K_{d-1},K_{d-1}\otimes_{R}F_j)=0\ \ \ , \ \ \ {\rm Ext}^{i}_{S}(K_{d-1},K_{d-1}\otimes_{R}M)=0,$$
and hence   ${\rm Hom}_{S}(K_{d-1},\mathcal{X})$ is exact. On the other hand, $F_j\in\mathcal{A}_{ K_{d-1}}(R)$ by Theorem \ref{1.b1}(2). So  $F_j\cong {\rm Hom}_{R}(K_{d-1},K_{d-1}\otimes_{R}F_j)$.
Hence, there is the following commutative
diagram with the lower row exact:
$$\xymatrix{
0\ar[r]&F_{k}\ar[r]\ar[d]^{\cong}&\cdots\ar[d]^{\cong} \ar[r]&M \ar[r]\ar[d]^{\cong}&0  \\
0\ar[r]&{\rm Hom}_{R}(K_{d-1},K_{d-1}\otimes_{R}F_{k})\ar[r]&\cdots \ar[r]&{\rm Hom}_{R}(K_{d-1},K_{d-1}\otimes_{R}M)\ar[r]&0, \\
}$$
where the upper row is an exact sequence of $R$-modules and any $F_j\in\mathcal{F}^{(n,d)}(R)$ for $0\leq j\leq k$. Then we obtain that $(n,d)$.${\rm fd}_{R}(M)\leq k$. Similarly, cases (1), (3) and (4) are follow.
\end{proof}
\begin{proposition}\label{1.8}
The following statements hold.
\begin{enumerate}
\item [\rm (1)]
If $S$ is an $n$-coherent ring, then the class  $\mathcal{I}^{(n,d)}(S)_{<\infty}$ is closed under extentions, kernels of epimorphisms and cokernels of monomorphisms;
\item [\rm (2)]
If $R$ is an $n$-coherent ring, then the class $\mathcal{F}^{(n,d)}(R)_{<\infty}$ is closed under extentions, kernels of epimorphisms and cokernels of monomorphisms.
\end{enumerate}
\end{proposition}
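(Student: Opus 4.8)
The plan is to reduce all three closure properties to the vanishing behaviour of $\operatorname{Ext}$ (respectively $\operatorname{Tor}$) along a short exact sequence, once the one-degree definition of $(n,d)$-injective dimension has been upgraded into a vanishing statement in all high degrees. The pivotal observation is the following consequence of $n$-coherence, which I would isolate first: if $S$ is $n$-coherent and $U$ is a finitely $n$-presented $S$-module, then $U$ is finitely $(n+1)$-presented, so its first syzygy $\Omega U=\operatorname{Ker}(F_0\to U)$ is again finitely $n$-presented (a finite $(n+1)$-presentation of $U$ shifts to a finite $n$-presentation of $\Omega U$). Iterating, every syzygy $\Omega^i U$ is finitely $n$-presented. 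Equivalently, over an $n$-coherent ring the finitely $n$-presented modules coincide with the super finitely presented ($FP_\infty$) modules, a class manifestly stable under syzygies.

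With the syzygy step in hand, the next step is the rigidity statement: for an $S$-module $M$ one has $(n,d).\operatorname{id}_S(M)\le k$ if and only if $\operatorname{Ext}_S^{d+j+1}(U,M)=0$ for every finitely $n$-presented $U$ and every $j\ge k$, not merely for $j=k$. The nontrivial implication uses dimension shifting, $\operatorname{Ext}_S^{d+j+1}(U,M)\cong\operatorname{Ext}_S^{d+k+1}(\Omega^{\,j-k}U,M)$, together with the fact that $\Omega^{\,j-k}U$ is finitely $n$-presented, so the right-hand side vanishes straight from the definition of the dimension. This is where $n$-coherence is genuinely used, and I expect it to be the main obstacle of the proof; without it the definition controls a single cohomological degree only, and the long exact sequence arguments below collapse.

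The final step feeds a short exact sequence $0\to A\to B\to C\to 0$ of $S$-modules, together with a finitely $n$-presented $U$, into the long exact sequence $\cdots\to\operatorname{Ext}_S^{d+m}(U,C)\to\operatorname{Ext}_S^{d+m+1}(U,A)\to\operatorname{Ext}_S^{d+m+1}(U,B)\to\operatorname{Ext}_S^{d+m+1}(U,C)\to\operatorname{Ext}_S^{d+m+2}(U,A)\to\cdots$. Letting $s$ and $t$ denote the (finite) $(n,d)$-injective dimensions of the two prescribed modules, listed in the order in which they occur in the sequence, the rigidity step makes all the relevant terms vanish for $m$ large, so exactness forces the remaining term to vanish in every high degree, giving a finite dimension. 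Concretely, for extensions ($A,C$ prescribed) one obtains $(n,d).\operatorname{id}_S(B)\le\max\{s,t\}$; for kernels of epimorphisms ($B,C$ prescribed) one obtains $(n,d).\operatorname{id}_S(A)\le\max\{s,t+1\}$; and for cokernels of monomorphisms ($A,B$ prescribed) one obtains $(n,d).\operatorname{id}_S(C)\le\max\{t,s-1\}$. Each bound is finite, which is precisely the assertion of part (1).

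For part (2) I would run the mirror argument with $\operatorname{Tor}$ and finitely $n$-presented $R^{op}$-modules, invoking the corresponding syzygy property over the $n$-coherent ring and the shift $\operatorname{Tor}_{d+j+1}^{R}(U,N)\cong\operatorname{Tor}_{d+k+1}^{R}(\Omega^{\,j-k}U,N)$; the long exact $\operatorname{Tor}$-sequence then yields the three closure properties verbatim, with the same numerical bounds. Alternatively, since $N\in\mathcal{F}^{(n,d)}(R)$ if and only if $N^{*}\in\mathcal{I}^{(n,d)}(R^{op})$ by \cite[Proposition 2.3]{TX}, and the character functor is exact (turning a short exact sequence into a short exact sequence with the roles reversed), part (2) can be deduced from part (1) applied over $R^{op}$ once one checks $(n,d).\operatorname{fd}_R(N)=(n,d).\operatorname{id}_{R^{op}}(N^{*})$, which follows from the same character-module duality for the defining $\operatorname{Tor}/\operatorname{Ext}$ groups.
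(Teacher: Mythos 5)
Your argument is correct, but it follows a genuinely different route from the paper's. The paper handles the three closure properties by manipulating resolutions: for extensions it takes honest injective resolutions of the two outer terms, applies the horseshoe lemma, and uses dimension shifting to identify the $k$-th cosyzygies as $(n,d)$-injective; for kernels of epimorphisms and cokernels of monomorphisms it invokes the existence of $(n,d)$-injective preenvelopes over an $n$-coherent ring (\cite[Theorem 2.20]{ZX}) to make the chosen $\mathcal{I}^{(n,d)}(S)$-coresolutions proper, and then splices them via Huang's comparison theorems \cite[Theorems 3.4 and 3.8]{HUU} to manufacture a finite coresolution of the third term. You instead isolate the rigidity statement --- that over an $n$-coherent ring the finitely $n$-presented modules are exactly the $FP_\infty$ ones, so their syzygies stay in the class and $\operatorname{Ext}_S^{d+k+1}(U,-)=0$ propagates to all higher degrees --- and then read everything off the long exact sequence of $\operatorname{Ext}$ (resp.\ $\operatorname{Tor}$). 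Your version is more elementary and self-contained (no preenvelope or proper-resolution machinery), and it has the added benefit of producing explicit numerical bounds such as $(n,d).\operatorname{id}_S(B)\le\max\{s,t\}$ for extensions, which the paper's splicing argument does not make visible; the paper's version, on the other hand, produces actual coresolutions, which is in the spirit of how $K_{d-1}$-$(n,d)$-dimensions are defined later. Both proofs consume $n$-coherence at essentially the same point, namely to control (co)syzygies of finitely $n$-presented modules; note also that your reduction of part (2) to part (1) via character modules quietly requires the coherence hypothesis on the $R^{op}$ side, but this left/right ambiguity is already present in the paper's own statement and proof.
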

\begin{proof}
Let  $0\rightarrow M^{'}\rightarrow M\rightarrow M^{''}\rightarrow 0$ be an  exact sequence of $S$-modules. If $(n,d)$.${\rm id}_{S}(M^{'})\leq (n,d)$.${\rm id}_{S}(M^{''})\leq k<\infty$, then 
 there exist the exact sequences 
$$0\longrightarrow M^{'}\longrightarrow I^{'}_{0} \longrightarrow I^{'}_{1} \longrightarrow\cdots\longrightarrow I^{'}_{k-1}\longrightarrow D^{'}_k\longrightarrow0$$ and 
$$0\longrightarrow M^{''}\longrightarrow I^{''}_{0} \longrightarrow I^{''}_{1} \longrightarrow\cdots\longrightarrow I^{''}_{k-1}\longrightarrow D^{''}_k\longrightarrow0 $$
of $S$-modules, where each $I^{'}_{i}$ and $I^{''}_{i}$ are injective. Since $S$ is $n$-coherent, $0={\rm Ext}_S^{d+k+1}(U,M^{'})\cong{\rm Ext}_S^{d+1}(U,D^{'}_k) $ and also, 
$0={\rm Ext}_S^{d+k+1}(U,M^{''})\cong{\rm Ext}_S^{d+1}(U,D^{''}_k)$ for every finitely $n$-presented $S$-module $U$,  and so $D^{'}_k$ and $D^{''}_k$ are in $\mathcal{I}^{(n,d)}(S)$. So
 by horseshoe lemma, there exist the following exact sequences:
$$0\longrightarrow M\longrightarrow I^{'}_{0}\oplus I^{''}_{0} \longrightarrow I^{'}_{1}\oplus I^{'''}_{1}  \longrightarrow\cdots\longrightarrow I^{'}_{k-1}\oplus I^{'''}_{k-1} \longrightarrow D_k\longrightarrow0$$ 
 $$0\rightarrow D^{'}_k\rightarrow D_k\rightarrow D^{''}_k\rightarrow 0.$$ We easily get that $D_k\in \mathcal{I}^{(n,d)}(S)$, and so $(n,d)$.${\rm id}_{S}(M)\leq k.$
 
 If $(n,d)$.${\rm id}_{S}(M^{'})\leq (n,d)$.${\rm id}_{S}(M)\leq k<\infty$, then 
 there exist the exact sequences 
$$\mathcal{Y}_{1}=0\longrightarrow M^{'}\longrightarrow I^{'}_{0} \longrightarrow I^{'}_{1} \longrightarrow\cdots\longrightarrow I^{'}_{k-1}\longrightarrow I^{'}_k\longrightarrow0$$ 
$$\mathcal{Y}_{2}=0\longrightarrow M\longrightarrow I_{0} \longrightarrow I_{1} \longrightarrow\cdots\longrightarrow I_{k-1}\longrightarrow I_k\longrightarrow0 $$
of $S$-modules, where each $I^{'}_{i}$ and $I_{i}$ are in $\mathcal{I}^{(n,d)}(S)$.  By \cite[Theorem 2.20]{ZX}, every $S$-module has an $(n,d)$-injective preenvelope. So ${\rm Hom}_S(\mathcal{Y}_{1}, \mathcal{I}^{(n,d)}(S))$ and  ${\rm Hom}_S(\mathcal{Y}_{2}, \mathcal{I}^{(n,d)}(S))$ are exact, and then by \cite[Theorem 3.4]{HUU}, 
 there exist the exact sequences 
$$0\longrightarrow M^{''}\longrightarrow I \longrightarrow I_{1}\oplus I^{'}_{2}  \longrightarrow\cdots\longrightarrow I_{k-1}\oplus I^{'}_{k} \longrightarrow 0 $$ 
 $$0\rightarrow I^{'}_{0}\rightarrow I_{0}\oplus I^{'}_{1}\rightarrow I\rightarrow 0,$$ where by \cite[Proposition 3.1]{TX}, $I_{i}\oplus I^{'}_{j}$ is in $\mathcal{I}^{(n,d)}(S)$. Also, by \cite[Lemma 2.8]{TX}, $I\in\mathcal{I}^{(n,d)}(S)$, since every $(n,d)$-injective is $(n,d+1)$-injective. Consequently, we get that $(n,d)$.${\rm id}_{S}(M^{''})\leq k$.

 If $(n,d)$.${\rm id}_{S}(M^{''})\leq (n,d)$.${\rm id}_{S}(M)\leq k<\infty$, then 
 there exist the exact sequences 
$$\mathcal{X}_{1}=0\longrightarrow M^{''}\longrightarrow I^{''}_{0} \longrightarrow I^{''}_{1} \longrightarrow\cdots\longrightarrow I^{''}_{k-1}\longrightarrow I^{''}_k\longrightarrow0$$ 
$$\mathcal{X}_{2}=0\longrightarrow M\longrightarrow I_{0} \longrightarrow I_{1} \longrightarrow\cdots\longrightarrow I_{k-1}\longrightarrow I_k\longrightarrow0 $$
of $S$-modules, where each $I^{''}_{i}$ and $I_{i}$ are in $\mathcal{I}^{(n,d)}(S)$.  By \cite[Theorem 2.20]{ZX},  ${\rm Hom}_S(\mathcal{X}_{1}, \mathcal{I}^{(n,d)}(S))$ and  ${\rm Hom}_S(\mathcal{X}_{2}, \mathcal{I}^{(n,d)}(S))$ are exact, and then by \cite[Theorem 3.8]{HUU}, 
 there exist the exact sequence
$$0\longrightarrow M^{'}\longrightarrow I_0 \longrightarrow I^{''}_{0} \oplus I_{1}  \longrightarrow\cdots\longrightarrow I^{''}_{k-1}\oplus I_{k} \longrightarrow 0,$$ 
  where by \cite[Proposition 3.1]{TX}, $I_{i}\oplus I^{'}_{j}$ is in $\mathcal{I}^{(n,d)}(S)$, and so  $(n,d)$.${\rm id}_{S}(M^{'})\leq k$.
 
 (2) It is similar to the proof of (1) using of \cite[Theorems 3.2 and 3.6]{HUU} and \cite[Theorem 2.20]{ZX}.
\end{proof}

\begin{theorem}\label{1.9}
Let $K_{d-1}$ be a  special faithfully semidualizing bimodule. Then the following statements hold.
\begin{enumerate}
\item [\rm (1)]
If $S$ is an $n$-coherent ring, then the class  $\mathcal{I}_{K_{d-1}}^{(n,d)}(R)_{<\infty}$  is closed under extentions, kernels of epimorphisms and cokernels of monomorphisms;
\item [\rm (2)]
If $R$ is an $n$-coherent ring, then the class  $\mathcal{F}_{K_{d-1}}^{(n,d)}(S)_{<\infty}$  is closed under extentions, kernels of epimorphisms and cokernels of monomorphisms.
\end{enumerate}
\end{theorem}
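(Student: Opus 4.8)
The plan is to transfer the problem, via the Foxby equivalence established above, from the relative classes $\mathcal{I}_{K_{d-1}}^{(n,d)}(R)_{<\infty}$ and $\mathcal{F}_{K_{d-1}}^{(n,d)}(S)_{<\infty}$ to the absolute classes $\mathcal{I}^{(n,d)}(S)_{<\infty}$ and $\mathcal{F}^{(n,d)}(R)_{<\infty}$, for which the three closure properties are already available from Proposition \ref{1.8}. For part (1) I would begin with a short exact sequence $0\to M'\to M\to M''\to 0$ of $R$-modules in which two of the three terms lie in $\mathcal{I}_{K_{d-1}}^{(n,d)}(R)_{<\infty}$; choosing \emph{which} two are assumed to be in the class realizes, in turn, closure under extensions, kernels of epimorphisms, and cokernels of monomorphisms, so a single argument handles all three simultaneously. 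By Proposition \ref{2.t1}(1) those two terms lie in $\mathcal{A}_{K_{d-1}}(R)$, and since the Auslander class is closed under extensions, kernels of epimorphisms and cokernels of monomorphisms by \cite[Theorem 6.2]{HW}, the third term lies in $\mathcal{A}_{K_{d-1}}(R)$ as well.

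With all three terms now in $\mathcal{A}_{K_{d-1}}(R)$, the vanishing condition $(A_1)$ guarantees that ${\rm Tor}_1^{R}(K_{d-1},-)$ annihilates each term, so applying $K_{d-1}\otimes_{R}-$ keeps the sequence exact and produces a short exact sequence
$$0\longrightarrow K_{d-1}\otimes_{R}M'\longrightarrow K_{d-1}\otimes_{R}M\longrightarrow K_{d-1}\otimes_{R}M''\longrightarrow 0$$
of $S$-modules. By Corollary \ref{1.ee1}(1) the two images corresponding to the originally chosen terms lie in $\mathcal{I}^{(n,d)}(S)_{<\infty}$; since $S$ is $n$-coherent, Proposition \ref{1.8}(1) then places the third image in $\mathcal{I}^{(n,d)}(S)_{<\infty}$ as well. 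Invoking Corollary \ref{1.ee1}(1) once more, together with the membership in $\mathcal{A}_{K_{d-1}}(R)$ obtained in the first step, yields that the remaining term of the original sequence lies in $\mathcal{I}_{K_{d-1}}^{(n,d)}(R)_{<\infty}$, which is exactly what is required.

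For part (2) I would run the dual argument with ${\rm Hom}_{S}(K_{d-1},-)$ in place of $K_{d-1}\otimes_{R}-$: use Proposition \ref{2.t1}(2) to place the two assumed terms in $\mathcal{B}_{K_{d-1}}(S)$, then \cite[Theorem 6.2]{HW} to complete the two-out-of-three in the Bass class, then the vanishing condition $(B_1)$ to see that ${\rm Ext}_{S}^{1}(K_{d-1},-)$ vanishes on every term so that ${\rm Hom}_{S}(K_{d-1},-)$ is exact on the sequence, and finally Corollary \ref{1.ee1}(2) together with Proposition \ref{1.8}(2) (with $R$ now $n$-coherent) to transfer the conclusion back; alternatively one may reduce (2) to (1) by applying the character-module functor and Proposition \ref{1.g1}. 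The only genuine obstacle is the exactness of the transfer functor on the given sequence, and this is precisely why the steps must be ordered so that all three terms are shown to lie in the Auslander (resp. Bass) class \emph{before} applying $K_{d-1}\otimes_{R}-$ (resp. ${\rm Hom}_{S}(K_{d-1},-)$): without first securing the full membership, the relevant ${\rm Tor}_1$ (resp. ${\rm Ext}^1$) term need not vanish and the transferred sequence could fail to remain short exact, breaking the reduction to Proposition \ref{1.8}.
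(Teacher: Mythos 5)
Your proposal is correct and follows essentially the same route as the paper: place two of the three terms in $\mathcal{A}_{K_{d-1}}(R)$ (resp. $\mathcal{B}_{K_{d-1}}(S)$), use the two-out-of-three property of the Auslander/Bass class from Holm--White to get the third, apply $K_{d-1}\otimes_{R}-$ (resp. ${\rm Hom}_{S}(K_{d-1},-)$) exactly, reduce to Proposition \ref{1.8} over the $n$-coherent ring, and transfer back. The only cosmetic difference is that the paper routes the dimension bookkeeping through Proposition \ref{2.s1}(4) rather than Corollary \ref{1.ee1}, which is an equivalent formulation.
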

\begin{proof}
 (1) Let  $0\rightarrow M^{'}\rightarrow M\rightarrow M^{''}\rightarrow 0$ be an  exact sequence of $R$-modules. If $K_{d-1}$-$(n,d)$.${\rm id}_{R}(M^{'})\leq K_{d-1}$-$(n,d)$.${\rm id}_{R}(M^{''})\leq k<\infty$, then  by Corollary \ref{1.ee1}(1),  $M^{'}, M^{''}\in\mathcal{A}_{K_{d-1}}(R)$.
  So by \cite[Corollary 6.3]{HW},  $M\in\mathcal{A}_{K_{d-1}}(R)$. Thus there is the following exact sequence:
$$0\longrightarrow K_{d-1}\otimes_{R}M^{'}\longrightarrow K_{d-1}\otimes_{R}M\longrightarrow K_{d-1}\otimes_{R}M^{''}\longrightarrow 0.$$
By Proposition \ref{2.s1}(4), $(n,d)$.${\rm id}_{S}(K_{d-1}\otimes_{R}M^{'}){\leq k}$ and $(n,d)$.${\rm id}_{S}(K_{d-1}\otimes_{R}M^{''}){\leq k}$. So by Proposition \ref{1.8}(1), $(n,d)$.${\rm id}_{S}(K_{d-1}\otimes_{R}M){\leq k}$,  and then by  Proposition \ref{2.s1}(4), $K_{d-1}$-$(n,d)$.${\rm id}_{R}(M)\leq k$.

If   max$\{K_{d-1}$-$(n,d)$.${\rm id}_{R}(M),  K_{d-1}$-$(n,d)$.${\rm id}_{R}(M^{'})\}\leq k<\infty$, then by Corollary \ref{1.ee1}(1),  $M, M^{'}\in\mathcal{A}_{K_{d-1}}(R)$.
  Hence by \cite[Corollary 6.3]{HW},  $M^{''}\in\mathcal{A}_{K_{d-1}}(R)$. So there is the following sequence:
$$0\longrightarrow K_{d-1}\otimes_{R}M^{'}\longrightarrow K_{d-1}\otimes_{R}M\longrightarrow K_{d-1}\otimes_{R}M^{''}\longrightarrow 0.$$
By Proposition \ref{2.s1}(4), $(n,d)$.${\rm id}_{S}(K_{d-1}\otimes_{R}M){\leq k}$ and $(n,d)$.${\rm id}_{S}(K_{d-1}\otimes_{R}M^{'}){\leq k}$. Then by Proposition \ref{1.8}(1), $(n,d)$.${\rm id}_{S}(K_{d-1}\otimes_{R}M^{''}){\leq k}$,  and so by  Proposition \ref{2.s1}(4), $K_{d-1}$-$(n,d)$.${\rm id}_{R}(M^{''})\leq k$. Similarly, we deduce that $\mathcal{I}^{(n,d)}_{K_{d-1}}(R)_{<\infty}$ is closed under  kernels of epimorphisms.

(2) It is similar to the proof of (1).
\end{proof}

 If $n=\infty$, then Theorem \ref{1.9} holds for any arbitrary ring. 
\begin{corollary}\label{1.900}
Let $K_{d-1}$ be a  special faithfully semidualizing bimodule. Then the following statements hold.
\begin{enumerate}
\item [\rm (1)]
The class  $\mathcal{I}_{K_{d-1}}^{(\infty,d)}(R)_{<\infty}$  is closed under extentions, kernels of epimorphisms and cokernels of monomorphisms;
\item [\rm (2)]
Tthe class  $\mathcal{F}_{K_{d-1}}^{(\infty,d)}(S)_{<\infty}$  is closed under extentions, kernels of epimorphisms and cokernels of monomorphisms.
\end{enumerate}
\end{corollary}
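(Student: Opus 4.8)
The plan is to obtain Corollary \ref{1.900} directly from Theorem \ref{1.9} by showing that the standing hypothesis of that theorem — that the base ring be $n$-coherent — becomes vacuous when $n=\infty$. Concretely, I would first unwind Definition \ref{1.lk-1}(3): a ring is $n$-coherent precisely when every finitely $n$-presented module is finitely $(n+1)$-presented. Taking $n=\infty$, a finitely $\infty$-presented module is by definition a super finitely presented module (Definition \ref{1.lk-2}(1)), i.e.\ one admitting a resolution $\cdots\to F_1\to F_0\to U\to 0$ by finitely generated free modules. Since this single resolution already exhibits $U$ as being presented by finitely generated frees at every stage, there is no strictly stronger presentation requirement to pass to, and the defining implication of coherence holds trivially. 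Hence every ring, left or right, is $\infty$-coherent.

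With this observation the argument is immediate. For part (1), specializing Theorem \ref{1.9}(1) to $n=\infty$, the hypothesis ``$S$ is an $n$-coherent ring'' is satisfied by every ring $S$, so the class $\mathcal{I}_{K_{d-1}}^{(\infty,d)}(R)_{<\infty}$ is closed under extensions, kernels of epimorphisms and cokernels of monomorphisms. Part (2) follows identically from Theorem \ref{1.9}(2), with the roles of $R$ and $S$ interchanged, using that every ring $R$ is likewise $\infty$-coherent.

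The only point warranting care — and what I regard as the main, if modest, obstacle — is confirming that the entire chain of auxiliary results feeding Theorem \ref{1.9} remains valid at $n=\infty$ with no hidden coherence assumptions beyond the one just shown to be automatic. In particular, the proof of Proposition \ref{1.8} invokes the existence of $(n,d)$-injective preenvelopes via \cite[Theorem 2.20]{ZX}, the exact-sequence constructions of \cite[Theorems 3.4 and 3.8]{HUU}, and the ascent of $(n,d)$-injectivity to $(n,d+1)$-injectivity \cite[Lemma 2.8]{TX}. I would verify that each of these is formulated for general $n$ (the paper's convention allows $n=\infty$ throughout) and that wherever coherence enters it enters only through the implication ``finitely $n$-presented $\Rightarrow$ finitely $(n+1)$-presented,'' which we have seen is automatic for $n=\infty$. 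Once this bookkeeping is checked, the Foxby-equivalence reduction in Theorem \ref{1.9} (through Proposition \ref{2.s1}(4) and Corollary \ref{1.ee1}) transports the closure properties of $\mathcal{I}^{(\infty,d)}(S)_{<\infty}$ and $\mathcal{F}^{(\infty,d)}(R)_{<\infty}$ to the $K_{d-1}$-relative classes exactly as before, and no further work is required.
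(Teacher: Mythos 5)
Your proposal is correct and is essentially the paper's own argument: the paper justifies Corollary \ref{1.900} with the single remark that Theorem \ref{1.9} holds for arbitrary rings when $n=\infty$, which is exactly your observation that every ring is $\infty$-coherent because the implication defining $n$-coherence is vacuous for super finitely presented modules. Your additional bookkeeping check that the auxiliary results remain valid at $n=\infty$ is sensible diligence but does not change the route.
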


\bibliographystyle{amsplain}

\end{document}